\numberwithin{equation}{section}
\newcommand{\BDC}{{\mathbf{D}}^{\mathrm{b}}}
\newcommand{\Mod}{\mathrm{Mod}}
\newcommand{\Hom}{\mathrm{Hom}}
\newcommand{\CC}{\mathbb{C}}
\newcommand{\RR}{\mathbb{R}}
\newcommand{\ZZ}{\mathbb{Z}}
\newcommand{\D}{\mathcal{D}}
\newcommand{\F}{\mathcal{F}}
\newcommand{\M}{\mathcal{M}}
\newcommand{\N}{\mathcal{N}}
\newcommand{\an}{{\rm an}}
\newcommand{\Sol}{{\rm Sol}}
\newcommand{\tl}[1]{\widetilde{#1}}
\newcommand{\simto}{\overset{\sim}{\longrightarrow}}
\newcommand{\op}{{\mbox{\scriptsize op}}}
\newcommand{\SD}{\mathcal{D}}
\newcommand{\SO}{\mathcal{O}}
\newcommand{\SM}{\mathcal{M}}
\newcommand{\SF}{\mathcal{F}}
\newcommand{\SH}{\mathcal{H}}
\newcommand{\calI}{\mathcal{I}}
\newcommand{\Conn}{\mathrm{Conn}}
\newcommand{\Modhol}{\mathrm{Mod}_{\mbox{\rm \scriptsize hol}}}
\newcommand{\Modrh}{\mathrm{Mod}_{\mbox{\rm \scriptsize rh}}}
\newcommand{\BDCcoh}{{\mathbf{D}}^{\mathrm{b}}_{\mbox{\rm \scriptsize coh}}}
\newcommand{\BDChol}{{\mathbf{D}}^{\mathrm{b}}_{\mbox{\rm \scriptsize hol}}}
\newcommand{\BDCrh}{{\mathbf{D}}^{\mathrm{b}}_{\mbox{\rm \scriptsize rh}}}
\newcommand{\BDCmero}{{\mathbf{D}}^{\mathrm{b}}_{\mbox{\rm \scriptsize mero}}}
\newcommand{\Dotimes}{\overset{D}{\otimes}}
\newcommand{\Potimes}{\overset{+}{\otimes}}
\newcommand{\rhom}{{\bfR}{\mathcal{H}}om}
\newcommand{\rihom}{{\bfR}{\mathcal{I}}hom}
\newcommand{\Prihom}{{\bfR}{\mathcal{I}}hom^+}
\newcommand{\I}{{\rm I}}
\newcommand{\che}[1]{\check{#1}}
\newcommand{\var}[1]{\overline{#1}}
\newcommand{\BEC}{{\mathbf{E}}^{\mathrm{b}}}
\newcommand{\ZEC}{{\mathbf{E}}^{\mathrm{0}}}
\newcommand{\ZECmero}{{\mathbf{E}}^{\mathrm{0}}_{\mbox{\rm \scriptsize mero}}}
\newcommand{\BECmero}{{\mathbf{E}}^{\mathrm{b}}_{\mbox{\rm \scriptsize mero}}}
\newcommand{\Q}{\mathbf{Q}}
\newcommand{\EE}{\mathbb{E}}
\newcommand{\bs}{\backslash}
\newcommand{\bfR}{\mathbf{R}}
\newcommand{\bfL}{\mathbf{L}}
\newcommand{\bfD}{\mathbf{D}}
\newcommand{\rmR}{{\rm R}}
\newcommand{\rmE}{{\rm E}}
\newcommand{\bfE}{\mathbf{E}}
\renewcommand{\Re}{\operatorname{Re}}
\newcommand{\reg}{{\rm reg}}
\newcommand{\sh}{{\rm sh}}
\newcommand{\ord}{\operatorname{ord}}
\newcommand{\AC}{(\mathbf{AC})}
\newcommand{\rami}{{\rm rm}}
\newcommand{\modi}{{\rm md}}
\newcommand{\blow}{{\rm bl}}
\newtheorem{theorem}{Theorem}[section]
\newtheorem{lemma}[theorem]{Lemma}
\newtheorem{proposition}[theorem]{Proposition}
\newtheorem{fact}[theorem]{Fact}
\theoremstyle{definition}
\newtheorem{definition}[theorem]{Definition}
\theoremstyle{remark}
\newtheorem{remark}[theorem]{\sc Remark}
\title{Another proof of the Riemann--Hilbert Correspondence\\ for Regular Holonomic $\D$-Modules\footnote{{\bf 2020 Mathematics 
Subject Classification: }32C38, 32S60, 35A27}}
\author{Yohei ITO\footnote{Department of Mathematics, Faculty of Science Division II, Tokyo University of Science, 1-3, Kagurazaka, Shinjuku-ku, Tokyo, 162-8601, Japan. E-mail: yitoh@rs.tus.ac.jp }}
\date{}
\begin{document}
\maketitle

\begin{abstract}
In this paper, we reprove
the Riemann--Hilbert correspondence for regular holonomic $\D$-modules of \cite{Kas84}
(see also \cite{Meb84})
by using the irregular Riemann--Hilbert correspondence of \cite{DK16}.
Moreover, we also prove the algebraic one by the same argument.
For this purpose, we study $\CC$-constructible enhanced ind-sheaves of \cite{Ito20, Ito21} in more detail.
\end{abstract}

\section{Introduction}
In 1984, the Riemann-Hilbert correspondence for analytic regular holonomic $\D$-modules
was established by M.\:Kashiwara \cite{Kas84}
as the equivalence of categories below (see also \cite{Meb84}).
Let $X$ be a complex manifold.
We denote by $\BDCrh(\D_X)$ the triangulated category of regular holonomic $\D_X$-modules,
by $\BDC_{\CC\mbox{\scriptsize -}c}(\CC_X)$ the one of $\CC$-constructible sheaves on X
and by $\Sol_X$ the solution functor.

\begin{fact}[{\cite[Main Theorem]{Kas84}, see also \cite[Thm.\:2.1.1]{Meb84}}]\label{regRH_ana}
There exists an equivalence of triangulated categories
$$\Sol_X\colon  \BDCrh(\D_X)^{\op}\simto\BDC_{\CC\mbox{\scriptsize -}c}(\CC_X).$$
\end{fact}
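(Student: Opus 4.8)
The plan is to deduce Fact~\ref{regRH_ana} from the irregular Riemann--Hilbert correspondence of \cite{DK16} by cutting that statement down to the regular case. Recall that \cite{DK16} provides a fully faithful enhanced solution functor
\[
\Sol^{\rm E}_X\colon \BDChol(\D_X)^{\op}\hookrightarrow \BEC(\I\CC_X)
\]
into the triangulated category of enhanced ind-sheaves on $X$. Since $\BDCrh(\D_X)$ is a full triangulated subcategory of $\BDChol(\D_X)$, the restriction of $\Sol^{\rm E}_X$ to regular holonomic complexes is automatically fully faithful; hence the real work is to identify its essential image and to compare it with the classical functor $\Sol_X$.

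First I would show that for $\M\in\BDCrh(\D_X)$ the enhanced solution complex $\Sol^{\rm E}_X(\M)$ is $\CC$-constructible, i.e.\ lies in the subcategory $\BEC_{\CC\mbox{\scriptsize -}c}(\I\CC_X)$ singled out in \cite{Ito20,Ito21}. By standard dévissage this reduces to the case of a single regular holonomic module, and then, via resolution of singularities together with Kashiwara's control of the characteristic variety, to the situation of a regular meromorphic connection along a normal crossing divisor, where the enhanced solutions admit an explicit normal form and are manifestly $\CC$-constructible. This is the step where the promised closer analysis of $\CC$-constructible enhanced ind-sheaves is needed.

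Next I would invoke the equivalence
\[
\BEC_{\CC\mbox{\scriptsize -}c}(\I\CC_X)\simto \BDC_{\CC\mbox{\scriptsize -}c}(\CC_X)
\]
coming from \cite{Ito20,Ito21}, and verify that composing it with $\Sol^{\rm E}_X$ recovers the ordinary solution functor $\Sol_X$ up to natural isomorphism. This compatibility should follow from the fact that, for regular $\M$, the enhanced solution complex is obtained from the ordinary solution complex by the natural embedding of $\CC$-constructible sheaves into enhanced ind-sheaves (via the constant enhanced ind-sheaf), so that applying the quasi-inverse of the displayed equivalence simply undoes that embedding. Combined with the first step, this shows that $\Sol_X\colon\BDCrh(\D_X)^{\op}\to\BDC_{\CC\mbox{\scriptsize -}c}(\CC_X)$ is well defined and fully faithful.

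It then remains to prove essential surjectivity: every $\CC$-constructible sheaf must be realized as a solution complex. Transported through the equivalence above, this is equivalent to showing that every object of $\BEC_{\CC\mbox{\scriptsize -}c}(\I\CC_X)$ lies in the image of $\Sol^{\rm E}_X|_{\BDCrh}$. I expect this to be the main obstacle, since it is the ``existence'' half of the Riemann--Hilbert problem and requires reconstructing a regular holonomic module from a given $\CC$-constructible object. I would proceed locally, building the module out of the explicit normal-form objects appearing in the $\CC$-constructibility step and gluing the local pieces using the full faithfulness already established; alternatively one can reduce to the classical existence statement for the minimal extension of a perverse sheaf. Together with full faithfulness, this yields the asserted equivalence of triangulated categories.
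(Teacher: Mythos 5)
There is a genuine gap at the centre of your argument. You invoke an equivalence $\BEC_{\CC\mbox{\scriptsize -}c}(\I\CC_X)\simto\BDC_{\CC\mbox{\scriptsize -}c}(\CC_X)$ ``coming from [Ito20, Ito21]'', but no such equivalence exists and none is claimed there. What [Ito20] proves (Theorem \ref{thm-Ito20} in this paper) is $\Sol_X^\rmE\colon\BDChol(\D_X)^{\op}\simto\BEC_{\CC\mbox{\scriptsize -}c}(\I\CC_X)$: the category of $\CC$-constructible \emph{enhanced} ind-sheaves is equivalent to the category of \emph{all} holonomic modules, irregular ones included, so it is strictly larger than $\BDC_{\CC\mbox{\scriptsize -}c}(\CC_X)\simeq\BDCrh(\D_X)^{\op}$; for instance $\EE_{\CC^\ast|\CC}^{\Re(1/z)}$ is $\CC$-constructible but corresponds to an irregular connection. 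Consequently your reduction of essential surjectivity to ``every object of $\BEC_{\CC\mbox{\scriptsize -}c}(\I\CC_X)$ lies in the image of $\Sol_X^\rmE|_{\BDCrh}$'' is a reduction to a false statement. The functors that actually exist are the fully faithful but \emph{not} essentially surjective embedding $e_X\circ\iota_X\colon\BDC_{\CC\mbox{\scriptsize -}c}(\CC_X)\to\BEC_{\CC\mbox{\scriptsize -}c}(\I\CC_X)$ (Proposition \ref{prop3.1}) and the sheafification $\sh_X$ in the other direction (Proposition \ref{prop3.2}), which is not faithful since it forgets the Stokes data. Your first step and the compatibility $\Sol_X\simeq\sh_X\circ\Sol_X^\rmE$ are fine (Facts \ref{fact_e} and \ref{fact_sh}), and full faithfulness of $\Sol_X$ on regular objects can be salvaged from these facts; essential surjectivity cannot.

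The missing idea is the characterization of regularity inside the enhanced picture, which is exactly the paper's key Lemma \ref{keylem}: for holonomic $\M$, the object $\Sol_X^\rmE(\M)$ is of sheaf type (i.e., lies in the essential image of $e_X\circ\iota_X$) if and only if $\M\in\BDCrh(\D_X)$. The nontrivial direction uses $\M_\reg$ from [KK81]: if $\Sol_X^\rmE(\M)\simeq e_X(\iota_X(\F))$, then sheafification gives $\F\simeq\Sol_X(\M)\simeq\Sol_X(\M_\reg)$, whence $\Sol_X^\rmE(\M)\simeq\Sol_X^\rmE(\M_\reg)$ by Fact \ref{fact_e}, and full faithfulness of $\Sol_X^\rmE$ forces $\M\simeq\M_\reg$. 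With this lemma essential surjectivity is immediate: given $\F\in\BDC_{\CC\mbox{\scriptsize -}c}(\CC_X)$, Proposition \ref{prop3.1} and Theorem \ref{thm-Ito20} produce a holonomic $\M$ with $e_X(\iota_X(\F))\simeq\Sol_X^\rmE(\M)$, and since this object is of sheaf type by construction, $\M$ is automatically regular; applying $\sh_X$ then gives $\F\simeq\Sol_X(\M)$. No local gluing of normal-form objects and no appeal to a classical existence statement for minimal extensions (which would risk circularity with the very theorem being proved) is needed.
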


After the appearance of Fact \ref{regRH_ana},
A.\:Beilinson and J.\:Bernstein developed systematically
a theory of regular holonomic $\D$-modules on smooth algebraic varieties
over the complex number field $\CC$ and
obtained an algebraic version of Fact \ref{regRH_ana} as follows.
Let $X$ be a smooth algebraic variety over $\CC$
and denote by $X^\an$ the underlying complex manifold of $X$.
We denote by $\BDCrh(\D_X)$ the triangulated category of regular holonomic $\D_X$-modules,
by $\BDC_{\CC\mbox{\scriptsize -}c}(\CC_X)$ the one of algebraic $\CC$-constructible sheaves on $X^\an$.

\begin{fact}
[{\cite[Main Theorem C (c)]{Be} and \cite[Theorem 14.4]{Bor87}, see also \cite[\S 4]{Sai89}}]\label{regRH_alg}
There exists an equivalence of categories
\[\Sol_X \colon \BDCrh(\D_X)^{\op}\simto\BDC_{\CC\mbox{\scriptsize -}c}(\CC_X),\
\M \mapsto 
\Sol_X(\M) := \Sol_{X^\an}(\M^\an).\]
\end{fact}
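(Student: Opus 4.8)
The plan is to deduce Fact~\ref{regRH_alg} by running, over the bordered space attached to $X$, the very same enhanced-ind-sheaf argument used to reprove the analytic Fact~\ref{regRH_ana}, and then descending from the analytic to the algebraic side. Fix a smooth compactification $X\hookrightarrow\overline{X}$ with $D:=\overline{X}\setminus X$ a normal crossing divisor, and form the bordered space $X_\infty:=(X^\an,\overline{X}^\an)$. The tool is the irregular Riemann--Hilbert correspondence of \cite{DK16}, which, transported through analytification to $X_\infty$, furnishes a fully faithful enhanced solution functor
$$\Sol^{\rmE}_{X_\infty}\colon \BDChol(\D_X)^{\op}\hookrightarrow \BEC(\I\CC_{X_\infty}),$$
whose essential image is the category $\BEC_{\CC\mbox{\scriptsize -}c}(\I\CC_{X_\infty})$ of $\CC$-constructible enhanced ind-sheaves investigated in \cite{Ito20,Ito21}.

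First I would recall the natural fully faithful embedding $e_{X_\infty}\colon \BDC_{\CC\mbox{\scriptsize -}c}(\CC_{X^\an})\hookrightarrow \BEC_{\CC\mbox{\scriptsize -}c}(\I\CC_{X_\infty})$ that realises algebraic $\CC$-constructible sheaves as enhanced ind-sheaves ``of sheaf type,'' together with the sheafification functor $\sh^{\rmE}_{X_\infty}$ satisfying $\sh^{\rmE}_{X_\infty}\circ e_{X_\infty}\cong\id$. The heart of the argument is the characterisation of regularity: a holonomic $\D_X$-module $\M$ is regular (along its singular support in $X$ and along $D$ at infinity) if and only if $\Sol^{\rmE}_{X_\infty}(\M)$ lies in the essential image of $e_{X_\infty}$. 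This rests on the Hukuhara--Turrittin--Levelt decomposition of the formal structure, the regular case being precisely the one in which no nonconstant exponential factor occurs, so that the enhanced solution complex reduces to its sheaf-type component.

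Granting this, I would establish the compatibility
$$\Sol^{\rmE}_{X_\infty}(\M)\;\cong\;e_{X_\infty}\big(\Sol_X(\M)\big)\qquad(\M\in\BDCrh(\D_X)),$$
so that on regular holonomic modules the enhanced solution functor factors as the classical $\Sol_X$ followed by $e_{X_\infty}$. Full faithfulness of $\Sol_X$ on $\BDCrh(\D_X)^{\op}$ then follows from that of $\Sol^{\rmE}_{X_\infty}$ and $e_{X_\infty}$, once one inserts the GAGA-type comparison identifying algebraic $\rHom$ of regular holonomic modules with its analytic counterpart on $\overline{X}^\an$. Essential surjectivity is obtained by reversing the route: given an algebraic $\CC$-constructible $F$, its image $e_{X_\infty}(F)$ is $\CC$-constructible of sheaf type, hence equals $\Sol^{\rmE}_{X_\infty}(\M)$ for some holonomic $\M$, which the regularity characterisation forces to be regular; applying $\sh^{\rmE}_{X_\infty}$ then recovers $F\cong\Sol_X(\M)$.

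The step I expect to be the main obstacle is the bookkeeping at infinity intrinsic to the bordered-space formalism. In contrast with the analytic Fact~\ref{regRH_ana}, algebraic regularity is a simultaneous condition inside $X$ and along the divisor $D$, and one must verify both that the regularity characterisation above correctly encodes the behaviour along $D$ and that the image under $\sh^{\rmE}_{X_\infty}$ of $\Sol^{\rmE}_{X_\infty}(\BDCrh(\D_X)^{\op})$ consists precisely of \emph{algebraic} (rather than merely analytic) $\CC$-constructible sheaves. Checking that these statements are independent of the chosen compactification $\overline{X}$, so that $\Sol_X$ is intrinsically defined, is the delicate technical point where the analytic argument must be genuinely adapted rather than merely transcribed.
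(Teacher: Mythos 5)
Your overall strategy is the paper's: work on the bordered space $X^{\an}_\infty=(X^{\an},\tl{X}^{\an})$, use the algebraic irregular correspondence $\Sol^{\rmE}_{X_\infty}\colon\BDChol(\D_X)^{\op}\simto\BEC_{\CC\mbox{\scriptsize -}c}(\I\CC_{X_\infty})$, embed constructible sheaves via $e_{X^\an_\infty}\circ\iota_{X^\an_\infty}$, and characterize $\BDCrh(\D_X)$ as the modules whose enhanced solutions are of sheaf type; full faithfulness and essential surjectivity then follow exactly as you describe. The two places where you diverge from (or stop short of) the paper are precisely the load-bearing steps. First, for the characterization ``sheaf type $\Rightarrow$ regular'' you appeal to the Hukuhara--Turrittin--Levelt decomposition, i.e.\ to showing that a nonconstant exponential factor obstructs sheaf type. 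That is workable but requires a nontrivial local analysis of the objects $\EE^{\Re\varphi}_{U|X}$. The paper avoids it entirely with a softer argument (Lemma \ref{keylem}): if $\Sol^{\rmE}(\M)\simeq e(\iota(\F))$, then sheafification gives $\F\simeq\Sol(\M)\simeq\Sol(\M_{\reg})$ by the Kashiwara--Kawai regularization, whence $\Sol^{\rmE}(\M)\simeq\Sol^{\rmE}(\M_{\reg})$ and $\M\simeq\M_{\reg}$ by full faithfulness of $\Sol^{\rmE}$. Second, the ``bookkeeping at infinity'' that you correctly identify as the main obstacle is not resolved in your sketch, but it is exactly where the paper does its remaining work: everything is pushed to the complete variety $\tl{X}$ via $\bfE j^{\an}_{!!}$ and $\bfD j_\ast$, the complete case is handled by Fact \ref{fact_mero} together with Deligne's equivalence (Fact \ref{RH_Deligne}), and the equivalence of ``$(\bfD j_\ast\M)^{\an}$ regular on $\tl{X}^{\an}$'' with ``$\M$ regular on $X$'' (including at infinity) is supplied by \cite[Thm.\:6.1.12]{HTT08}. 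Independence of the compactification is a cited property of $\BEC(\I\CC_{X^\an_\infty})$ and needs no new argument. So your outline is sound, but to be a complete proof it must either carry out the exponential-factor analysis or, more economically, substitute the $\M_{\reg}$ argument, and it must make the reduction to the complete case explicit.
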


The problem of extending
the Riemann--Hilbert correspondence to cover
the case of holonomic $\D$-modules with irregular singularities
had been open for 30 years.
After a groundbreaking development in the theory of irregular meromorphic connections by 
K.\:S.\:Kedlaya \cite{Ked10, Ked11} and T.\:Mochizuki \cite{Mochi09, Mochi11},
A.\:D'Agnolo and M.\:Kashiwara established the Riemann--Hilbert correspondence
for analytic irregular holonomic $\D$-modules in \cite{DK16} as follows.
Let $X$ be a complex manifold.
We denote by $\BDChol(\D_{X})$ the triangulated category of
holonomic $\D_X$-modules and by $\BEC_{\RR\mbox{\scriptsize -}c}(\I\CC_X)$
the one of $\RR$-constructible enhanced ind-sheaves on $X$.

\begin{fact}[{\cite[Thm.\:9.5.3]{DK16}}]\label{irregRH_DK}
There exists a fully faithful embedding
\[\Sol_X^{\rmE} \colon \BDChol(\D_X)^{\op}\hookrightarrow\BEC_{\RR\mbox{\scriptsize -}c}(\I\CC_X).\]
\end{fact}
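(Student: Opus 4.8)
The plan is to reduce full faithfulness of $\Sol_X^{\rmE}$ to a comparison isomorphism and then to an explicit computation on a normal form. Since $\Sol_X^{\rmE}$ is contravariant, full faithfulness amounts to the assertion that for all $\M,\N\in\BDChol(\D_X)$ the natural morphism
$$\rHom_{\D_X}(\N,\M)\longrightarrow \rHom\bigl(\Sol_X^{\rmE}(\M),\Sol_X^{\rmE}(\N)\bigr)$$
is an isomorphism in $\BDC(\CC)$. First I would check that $\Sol_X^{\rmE}$ is well defined with values in $\BEC_{\RR\mbox{\scriptsize -}c}(\I\CC_X)$; this $\RR$-constructibility is not formal and already relies on the structure theory of holonomic modules (the Hukuhara--Levelt--Turrittin decomposition together with the resolution of turning points of Kedlaya and Mochizuki), applied to the enhanced solution complex of a meromorphic connection.

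Next I would establish that $\Sol_X^{\rmE}$ is compatible with the Grothendieck operations on the two sides: proper direct images, non-characteristic (in particular smooth) inverse images, duality — which exchanges $\Sol_X^{\rmE}$ and $\DR_X^{\rmE}$ — and tensoring with the exponential twists $\mathcal{E}^{\varphi}$. These compatibility statements are the technical backbone: they permit a dévissage in which a general holonomic complex is replaced by its cohomologies, then by meromorphic connections on the complement of a divisor, and finally — after a ramified covering and a sequence of blow-ups furnished by the structure theorem — by a direct sum of \emph{normal form} connections $\mathcal{E}^{\varphi}\otimes\R$ with $\R$ regular and $\varphi$ a meromorphic function. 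Because the operations are intertwined by $\Sol_X^{\rmE}$ and the comparison morphism is compatible with distinguished triangles, the comparison isomorphism for the original module follows once it is known for these models.

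The core of the argument is then the explicit analysis of $\Sol_X^{\rmE}$ on a normal form. Here one shows that $\Sol_X^{\rmE}(\mathcal{E}^{\varphi}\otimes\R)$ is the exponential enhanced ind-sheaf attached to $e^{\varphi}$, tensored with the (regular) solution complex of $\R$, so that the exponential support of the enhanced solutions records the irregular datum $\varphi$. The comparison isomorphism then reduces to computing $\rHom$ between two such exponential enhanced ind-sheaves and matching it with $\rHom_{\D_X}$ between the corresponding connections; the decisive point is that $\Sol_X^{\rmE}$ \emph{separates} distinct exponential factors, i.e. the Stokes structure is recorded faithfully by the enhanced ind-sheaf. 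Equivalently, one constructs a reconstruction functor — applying $\rihom$ into the enhanced ind-sheaf $\sho_X^{\rmE}$ of tempered holomorphic functions — that returns $\M$ from $\Sol_X^{\rmE}(\M)$, and verifies this reconstruction on the normal forms.

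I expect the main obstacle to lie precisely in this local step and in the structure theory on which it rests: controlling the asymptotic (Stokes) behaviour of the solutions of an irregular connection after the Kedlaya--Mochizuki reduction, and checking that the enhanced formalism encodes this behaviour without loss. By contrast, the reduction through the six operations and the dévissage, though lengthy, are comparatively formal once the compatibility theorems and the $\RR$-constructibility of the image are in place.
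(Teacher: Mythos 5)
This statement is a Fact imported verbatim from \cite[Thm.\:9.5.3]{DK16}; the present paper offers no proof of it and uses it as a black box, so there is no internal argument to compare against. Your sketch is an accurate outline of the proof actually given by D'Agnolo--Kashiwara: $\RR$-constructibility of the image, compatibility of $\Sol_X^{\rmE}$ with the Grothendieck operations, d\'evissage via the Kedlaya--Mochizuki structure theorem to exponential normal forms, and reconstruction of $\M$ from $\Sol_X^{\rmE}(\M)$ by applying $\rihom$ into $\sho_X^{\rmE}$.
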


Furthermore, T.\:Mochizuki proved that
the essential image of $\Sol_X^{\rmE}$ can be characterized  by the curve test \cite{Mochi16}.
On the other hand, 
in \cite[Thm.\:6.2]{Kas16},
M.\:Kashiwara showed the similar result of Fact \ref{irregRH_DK}
by using enhanced subanalytic sheaves instead of enhanced ind-sheaves,
see also \cite{Ito21b}.
In \cite[Thm.\:8.6]{Kuwa18},
T.\:Kuwagaki introduced another approach
to the irregular Riemann--Hilbert correspondence
via irregular constructible sheaves which are defined by $\CC$-constructible sheaves
with coefficients in a finite version of the Novikov ring and special gradings.

In \cite{Ito20}, the author defined $\CC$-constructibility for enhanced ind-sheaves on a complex manifold $X$
and proved that they are nothing but objects of the essential image of $\Sol_X^{\rmE}$.
Namely, we obtain an equivalence of categories as below.
We denote by $\BEC_{\CC\mbox{\scriptsize -}c}(\I\CC_X)$ the triangulated category of $\CC$-constructible enhanced ind-sheaves on $X$.

\begin{theorem}[{\cite[Thm.\:3.26]{Ito20}}]\label{irregRH_ana}
There exists an equivalence of categories
\[\Sol_X^{\rmE} \colon \BDChol(\D_X)^{\op}\simto \BEC_{\CC\mbox{\scriptsize -}c}(\I\CC_X).\]
\end{theorem}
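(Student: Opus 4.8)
The plan is to upgrade the fully faithful embedding of Fact~\ref{irregRH_DK} to an equivalence onto the subcategory $\BEC_{\CC\mbox{\scriptsize -}c}(\I\CC_X)$. Since full faithfulness of $\Sol_X^{\rmE}$ on all of $\BDChol(\D_X)^{\op}$ into $\BEC_{\RR\mbox{\scriptsize -}c}(\I\CC_X)$ is already granted by \cite{DK16}, two things remain to be shown: first, that the essential image is contained in $\BEC_{\CC\mbox{\scriptsize -}c}(\I\CC_X)$, i.e. that $\Sol_X^{\rmE}(\M)$ is $\CC$-constructible for every holonomic $\M$; and second, that $\Sol_X^{\rmE}$ is essentially surjective onto $\BEC_{\CC\mbox{\scriptsize -}c}(\I\CC_X)$. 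The technical engine throughout is the fine local description of $\CC$-constructible enhanced ind-sheaves in the spirit of \cite{Ito20, Ito21}: after refining to a complex analytic stratification and blowing up to a normal crossing situation, such objects decompose into elementary pieces of the form $\E_{U|X}^{\Re\varphi} \otimes L$, with $\varphi$ meromorphic along the divisor and $L$ an ordinary $\CC$-constructible sheaf.

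For the image containment I would invoke the structure theory of holonomic $\D$-modules. Working locally and using the good formal decomposition of Kedlaya \cite{Ked10, Ked11} and Mochizuki \cite{Mochi09, Mochi11} (after a ramified covering and a blow-up turning the turning locus into a normal crossing divisor), any holonomic module is, up to such operations, assembled from elementary meromorphic connections $\D_X e^{\varphi}$ tensored with regular holonomic modules. A direct computation shows that $\Sol_X^{\rmE}(\D_X e^{\varphi})$ is the elementary enhanced ind-sheaf $\E_{U|X}^{\Re\varphi}$, while the regular factors contribute ordinary $\CC$-constructible sheaves via Fact~\ref{regRH_ana}. Since $\Sol_X^{\rmE}$ is compatible with the standard operations and with proper pushforward along the blow-up, and since $\CC$-constructibility is stable under these operations, one concludes that $\Sol_X^{\rmE}(\M)$ is $\CC$-constructible.

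For essential surjectivity I would argue by induction on the dimension of the support. Given $K \in \BEC_{\CC\mbox{\scriptsize -}c}(\I\CC_X)$, restrict to the open dense stratum $U$ of its stratification, where $K|_U$ is a shift of a finite sum of elementary pieces $\E_{U|X}^{\Re\varphi} \otimes L$; each such piece is $\Sol_U^{\rmE}$ of an explicit meromorphic connection, so $K|_U$ lies in the essential image over $U$. One then extends across the complementary divisor: the recollement triangle relating the open inclusion $j$ and the closed inclusion $i$ on the enhanced side is matched, through the already-constructed pieces and full faithfulness, by the corresponding triangle of $\D$-module functors, with the induction hypothesis handling the lower-dimensional closed part. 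Full faithfulness is precisely what promotes the gluing isomorphisms of enhanced ind-sheaves on overlaps to isomorphisms of $\D$-modules satisfying the cocycle condition, so the local preimages glue to a global holonomic $\M$ with $\Sol_X^{\rmE}(\M) \simto K$.

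The main obstacle is the essential surjectivity step, and within it two coupled difficulties: pinning down the precise local normal form of $\CC$-constructible enhanced ind-sheaves that exhibits each constituent as the enhanced solution complex of an elementary connection, and controlling the extension across each stratum so that the reconstructed object remains holonomic and its solution complex reproduces $K$ globally rather than merely stratum by stratum. The containment direction, by contrast, reduces to a computation with elementary models propagated through the operations.
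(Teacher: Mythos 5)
This theorem is not proved in the present paper: it is imported verbatim from \cite[Thm.\:3.26]{Ito20} (restated as Theorem \ref{thm-Ito20}), so the only comparison available is with the strategy of \cite{Ito20}, which the introduction describes as a Mebkhout-style d\'evissage through stratifications, blow-ups and ramifications down to meromorphic connections. Your overall architecture --- full faithfulness from Fact \ref{irregRH_DK}, image containment via the Kedlaya--Mochizuki good formal structure theorem reducing to elementary models $\EE_{U|X}^{\Re\varphi}$, and essential surjectivity by induction over strata using the local normal form and Proposition \ref{lem-modi} --- is indeed that strategy.

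There is, however, one step that is genuinely problematic \emph{in the context of this paper}: you propose to handle the regular factors of the formal decomposition ``via Fact~\ref{regRH_ana}'', i.e.\ via Kashiwara's full regular Riemann--Hilbert correspondence. The entire point of the present paper is to \emph{deduce} Fact \ref{regRH_ana} from Theorem \ref{irregRH_ana}, and the author explicitly flags (before Propositions \ref{prop3.1} and \ref{prop3.2}, and in the footnotes) which ingredients must be re-proved so as to avoid exactly this circularity. What your argument actually needs at that point is much weaker: the regular pieces appearing after blow-up and ramification are regular \emph{meromorphic connections} along a normal crossing divisor, for which Deligne's equivalence $\Conn^\reg(X;D)\simeq {\rm Loc}(X\setminus D)$ (Fact \ref{RH_Deligne}) together with Fact \ref{fact_e} suffices; indeed, on a simply connected sector the local system trivializes and $\Sol^{\rmE}$ of such a factor becomes a direct sum of copies of $\CC^{\rmE}$, which is exactly the shape required by Definition \ref{def-normal}\,(iii). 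You should replace the appeal to Fact \ref{regRH_ana} by Fact \ref{RH_Deligne}. A secondary, smaller gap: in the essential surjectivity step you glue ``local preimages'' via a cocycle condition, but descent of this kind is not available in a triangulated category; the argument in \cite{Ito20} instead produces a meromorphic connection globally on each blown-up stratum closure via Proposition \ref{lem-modi}, pushes it forward, and lifts the recollement triangles one at a time using full faithfulness and induction on the length of the stratification. As written, your gluing step would need to be recast in that form to be airtight.
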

Moreover, the author proved an algebraic version of Theorem \ref{irregRH_ana} in \cite{Ito21}.
Let $X$ be a smooth algebraic variety
and denote by $\tl{X}$ a smooth completion of $X$.
The author defined algebraic $\CC$-constructibility for enhanced ind-sheaves
on a bordered space $X_\infty^\an = (X^\an, \tl{X}^\an)$
and proved the following result.
We denote by $\BEC_{\CC\mbox{\scriptsize -}c}(\I\CC_{X_\infty})$
the triangulated category of algebraic $\CC$-constructible enhanced ind-sheaves on $X_\infty^\an$.

\begin{theorem}[{\cite[Thm.\:3.11]{Ito21}}]\label{irregRH_alg}
There exists an equivalence of categories
\[\Sol_{X_\infty}^{\rmE} \colon \BDChol(\D_X)^{\op}\simto \BEC_{\CC\mbox{\scriptsize -}c}(\I\CC_{X_\infty}).\]
\end{theorem}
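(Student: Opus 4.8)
The plan is to deduce the statement from the analytic equivalence of Theorem \ref{irregRH_ana} by transferring everything to the smooth completion $\tl{X}$. Write $D := \tl{X} \setminus X$, which after Hironaka's resolution may be assumed to be a normal crossing divisor, and let $\tl{\jmath}\colon X \hookrightarrow \tl{X}$ be the open embedding. On the D-module side the algebraic direct image $\int_{\tl{\jmath}}$ identifies $\BDChol(\D_X)$ with the full subcategory of $\BDChol(\D_{\tl{X}})$ of objects localized along $D$ (those $\N$ with $\N \simeq \N(\ast D)$), a quasi-inverse being the restriction $\tl{\jmath}^{\,\ast}$. Since $\tl{X}$ is proper, holonomic D-module GAGA gives $\BDChol(\D_{\tl{X}}) \simeq \BDChol(\D_{\tl{X}^\an})$, so altogether $\BDChol(\D_X)$ is equivalent to the category of analytic holonomic $\D_{\tl{X}^\an}$-modules that are meromorphic along $D$, i.e. satisfy $\N \simeq \N(\ast D)$.

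First I would factor the solution functor through $\tl{X}^\an$. For $\M \in \BDChol(\D_X)$ with meromorphic extension $\N$ on $\tl{X}^\an$, the object $\Sol_{X_\infty}^{\rmE}(\M)$ is the image of the analytic enhanced solution complex $\Sol_{\tl{X}^\an}^{\rmE}(\N)$ under the functor $\BEC(\I\CC_{\tl{X}^\an}) \to \BEC(\I\CC_{X_\infty})$ attached to the bordered structure of $X_\infty^\an = (X^\an, \tl{X}^\an)$. By Theorem \ref{irregRH_ana} applied to the compact complex manifold $\tl{X}^\an$, the analytic functor $\Sol_{\tl{X}^\an}^{\rmE}$ is already an equivalence $\BDChol(\D_{\tl{X}^\an})^{\op} \simto \BEC_{\CC\mbox{\scriptsize -}c}(\I\CC_{\tl{X}^\an})$.

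The crux is then a compatibility: I would show that $\Sol_{\tl{X}^\an}^{\rmE}$ intertwines the meromorphic localization $\N \mapsto \N(\ast D)$ on the D-module side with the restriction to the bordered space $X_\infty$ on the enhanced side. Granting this, the analytic equivalence descends to the two localized categories: the subcategory of meromorphic D-modules maps isomorphically onto the image of the bordered restriction, and after identifying these with $\BDChol(\D_X)^{\op}$ and with $\BEC_{\CC\mbox{\scriptsize -}c}(\I\CC_{X_\infty})$ respectively, one obtains that $\Sol_{X_\infty}^{\rmE}$ is fully faithful and essentially surjective. Full faithfulness is inherited formally from the adjunction $(\int_{\tl{\jmath}}, \tl{\jmath}^{\,\ast})$ and its enhanced counterpart; essential surjectivity reduces to the assertion that every algebraic $\CC$-constructible enhanced ind-sheaf on $X_\infty^\an$ lifts to a $\CC$-constructible enhanced ind-sheaf on $\tl{X}^\an$ corresponding to a meromorphic-along-$D$ D-module.

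The hard part will be establishing this compatibility together with the precise matching of the notion of algebraic $\CC$-constructibility on $X_\infty^\an$ from \cite{Ito21} with the localization condition $\N \simeq \N(\ast D)$ on $\tl{X}^\an$. This is exactly where the detailed study of $\CC$-constructible enhanced ind-sheaves is required. I expect to argue locally: by resolution of singularities and the Kedlaya--Mochizuki structure theorem one reduces to a normal crossing model, where the moderate-growth and rapid-decay behaviour of an enhanced ind-sheaf near $D$ can be computed explicitly and matched with the meromorphic localization of the corresponding D-module. Checking that these local descriptions are compatible with gluing and that the two constructibility conditions genuinely coincide is the technical heart of the argument.
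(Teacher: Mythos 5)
This statement is not actually proved in the present paper: it is quoted verbatim from \cite[Thm.\:3.11]{Ito21} (and restated as Theorem \ref{thm-Ito21}), and serves here only as an input to Theorems \ref{main3} and \ref{main4}. So there is no internal proof to compare against; what follows measures your sketch against the argument of \cite{Ito21}. Your skeleton --- compactify by Hironaka/Nagata, transport everything to $\tl{X}$ via $\bfD j_\ast$ on the $\D$-module side and $\bfE j^\an_{!!}$ on the enhanced side, identify $\BDChol(\D_X)$ with the objects of $\BDChol(\D_{\tl{X}})$ localized along $D$, and descend the equivalence on the complete variety to the localized subcategories --- is indeed the strategy used there. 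Note, however, that the ``precise matching'' you single out as the technical heart is largely tautological: algebraic $\CC$-constructibility on $X^\an_\infty$ is \emph{defined} (Definition \ref{def-algconst}) by the condition $\bfE j^\an_{!!}K\in\BEC_{\CC\mbox{\scriptsize -}c}(\I\CC_{\tl{X}})$, and $\Sol^\rmE_{X_\infty}$ is \emph{defined} as $\bfE(j^\an)^{-1}\Sol^\rmE_{\tl{X}}(\bfD j_\ast(\cdot))$. Likewise the compatibility you defer, namely that $\Sol^\rmE$ intertwines $\N\mapsto\N(\ast D)$ with $L\mapsto\pi^{-1}\CC_{X^\an}\otimes L\simeq\bfE j^\an_{!!}\bfE(j^\an)^{-1}L$, does not require the local Kedlaya--Mochizuki/moderate-growth analysis you propose: it is the formal isomorphism $\Sol^\rmE(\N(\ast D))\simeq\pi^{-1}\CC_{X\setminus D}\otimes\Sol^\rmE(\N)$ established in \cite{DK16}. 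With these two points in hand, full faithfulness and essential surjectivity follow formally, as you indicate.

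The one step of your plan that is a genuine gap as written is the appeal to ``holonomic $\D$-module GAGA'' $\BDChol(\D_{\tl{X}})\simeq\BDChol(\D_{\tl{X}^\an})$, followed by an application of the \emph{analytic} Theorem \ref{irregRH_ana} on $\tl{X}^\an$. That equivalence is true for complete $\tl{X}$ but is not free: it requires Chow-type algebraization of supports and stratifications together with Malgrange's effectivity theorem for analytic meromorphic connections (the paper records only the connection case, Fact \ref{fact_mero}), and if you go this route you must additionally check that analytic and algebraic $\CC$-constructibility of enhanced ind-sheaves coincide on a complete variety --- otherwise the analytic theorem lands you in $\BEC_{\CC\mbox{\scriptsize -}c}(\I\CC_{\tl{X}^\an})$ rather than in the smaller category $\BEC_{\CC\mbox{\scriptsize -}c}(\I\CC_{\tl{X}})$ that the algebraic statement requires. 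The cleaner and intended route is to bypass GAGA entirely and invoke the already-algebraic complete case, Theorem \ref{thm-Ito21_complete}, on $\tl{X}$. If you substitute that for your GAGA step and replace the proposed local analysis by the formal localization compatibility from \cite{DK16}, your outline becomes essentially the proof of \cite{Ito21}.
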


In this paper, we reprove Fact \ref{regRH_ana} (resp.\ Fact \ref{regRH_alg})
by using Fact \ref{irregRH_DK} and Theorem \ref{irregRH_ana} (resp.\ Theorem \ref{irregRH_alg})
in Theorem \ref{main2} (resp.\ Theorem \ref{main4}).
For this purpose, we study $\CC$-constructible enhanced ind-sheaves of
\cite[Def.\:3.19]{Ito20} (resp.\ \cite[Def.\:3.10]{Ito21})
in Propositions \ref{prop3.1}, \ref{prop3.2} (resp.\ Propositions \ref{prop3.7}, \ref{prop3.8}).
The key result of this paper is Lemma \ref{keylem}.

Note that the proofs of Theorems \ref{main1}, \ref{main2}, \ref{main3} and \ref{main4} are NOT circular reasoning.
The idea of the proof is in line with the one used by Z.\:Mebkhout
in the Riemann--Hilbert correspondence for regular holonomic $\D$-modules of \cite[Thm.\:2.1.1]{Meb84}.
Namely,
we reduce the problem to the case of regular meromorphic connections
by the d\'{e}vissage and the resolution singularity of \cite{Hiro64}.

\newpage
\section*{Acknowledgement}
I would like to thank Dr.\ Tauchi of Kyushu University for many discussions and giving many comments.

This work was supported by Grant-in-Aid for Research Activity Start-up (No.\ 21K20335)
and Grant-in-Aid for Young Scientists (No.\ 22K13902), 
Japan Society for the Promotion of Science.

\section{Preliminary Notions and Results}\label{sec-2}
%

\subsection{Bordered Spaces}
We shall recall a notion of bordered spaces.
See \cite[\S 3.2]{DK16} and \cite[2.1]{DK21} for the details.

A bordered space is a pair $M_{\infty} = (M, \che{M})$ of
a good topological space $\che{M}$ 
(i.e., a locally compact Hausdorff space which is countable at infinity and has finite soft dimension)
and an open subset $M\subset\che{M}$.
A morphism $f \colon (M, \che{M})\to (N, \che{N})$ of bordered spaces
is a continuous map $f \colon M\to N$ such that the first projection
$\che{M}\times\che{N}\to\che{M}$ is proper on
the closure $\var{\Gamma}_f$ of the graph $\Gamma_f$ of $f$ 
in $\che{M}\times\che{N}$. 
The category of good topological spaces is embedded into that
of bordered spaces by the identification $M = (M, M)$. 
Note that we have the morphism $j_{M_\infty} \colon M_\infty\to \che{M}$
of bordered spaces given by the embedding $M\hookrightarrow \che{M}$.
We sometimes denote $j_{M_\infty}$ by $j$ for short.
For a locally closed subset $Z\subset M$ of $M$,
we set $Z_\infty := (Z, \var{Z})$ where $\var{Z}$ is the closure of $Z$ in $\che{M}$
and denote by $i_{Z_\infty} \colon Z_\infty\to \var{Z}$ the morphism of bordered spaces
given by the embedding $Z\hookrightarrow \var{Z}$.

By definition, a subset of $M_\infty = (M, \che{M})$ is a subset of $M$.
We say that a subset $Z$ of $M_\infty$ is open (resp.\ closed, locally closed)
if it is so in $M$.
Moreover, a subset $Z$ of $M_\infty$ is a relatively compact subset
if it is contained in a compact subset of $ \che{M}$.

\subsection{Ind-Sheaves on Bordered Spaces}
We shall recall a notion of ind-sheaves on a bordered space of \cite[\S 3.2]{DK16}.

Let us denote by $\I\CC_{M_\infty}$ the abelian category of 
 ind-sheaves on a bordered space $M_{\infty} = (M, \che{M})$
 and denote by $\BDC(\I\CC_{M_\infty})$ the triangulated category of them.
For a morphism $f \colon M_\infty\to N_\infty$ 
of bordered spaces, 
we have the Grothendieck operations 
$ \otimes,\ \rihom,\ \rmR f_\ast,\ \rmR f_{!!},\ f^{-1},\ f^! $
for ind-sheaves on bordered spaces.
Note that there exists an embedding functor 
$\iota_{M_\infty} \colon \BDC(\CC_M) \hookrightarrow \BDC(\I\CC_{M_\infty})$.
We sometimes write $\BDC(\CC_{M_\infty})$ for $\BDC(\CC_{M})$,
when considered as the full subcategory of $\BDC(\I\CC_{M_\infty})$.
Note that there exists the standard t-structure
on $\BDC(\I\CC_{M_\infty})$.
Note also that the embedding functor $\iota_{M_\infty}$ has a left adjoint functor
$\alpha_{M_\infty} \colon \BDC(\I\CC_{M_\infty})\to \BDC(\CC_M)$.

\subsection{Enhanced Ind-Sheaves}\label{subsec2.3}
We shall recall some basic notions of enhanced ind-sheaves on bordered spaces and results on those.
Reference are made to \cite{KS16-2} and \cite{DK19, DK21}.
Moreover we also refer to \cite{DK16} and \cite{KS16}
for the notions of enhanced ind-sheaves on good topological spaces.

Let $M_\infty = (M, \che{M})$ be a bordered space.
We set $\RR_\infty := (\RR, \var{\RR})$ for 
$\var{\RR} := \RR\sqcup\{-\infty, +\infty\}$,
and let $t\in\RR$ be the affine coordinate. 
We consider the morphism of bordered spaces
$\pi\colon M_\infty \times\RR_\infty\to M_\infty$
given by the projection map $\pi\colon M\times \mathbb{R}\to M, (x,t)\mapsto x$. 
Then the triangulated category of enhanced ind-sheaves on a bordered space $M_\infty$ is defined by 
$$\BEC(\I\CC_{M_\infty}) :=
\BDC(\I\CC_{M_\infty \times\RR_\infty})/\pi^{-1}\BDC(\I\CC_{M_\infty}).$$
The quotient functor
$\Q_{M_\infty} \colon \BDC(\I\CC_{M_\infty\times\RR_\infty})\to\BEC(\I\CC_{M_\infty})$
has fully faithful left and right adjoints
$\bfL_{M_\infty}^\rmE,\bfR_{M_\infty}^\rmE \colon
\BEC(\I\CC_{M_\infty}) \to\BDC(\I\CC_{M_\infty\times\RR_\infty})$, respectively.
We sometimes denote $\Q_{M_\infty}$ (resp.\ $\bfL_{M_\infty}^\rmE,\ \bfR_{M_\infty}^\rmE$ )
by $\Q$ (resp.\ $\bfL^\rmE,\ \bfR^\rmE$) for short.
Then we have the standard t-structure 
on $\BEC(\I\CC_{M_\infty})$
which is induced by the standard t-structure on $\BDC(\I\CC_{M_\infty\times\RR_\infty})$.
We denote by $\bfE^0(\I\CC_{M_\infty})$
the heart of $\BEC(\I\CC_{M_\infty})$ with respect to the standard t-structure
and by $\SH^n \colon \BEC(\I\CC_{M_\infty})\to\bfE^0(\I\CC_{M_\infty})$
the $n$-th cohomology functor.
For a morphism $f \colon M_\infty\to N_\infty$ of bordered spaces, 
we have the Grothendieck operations 
$\Potimes,\ \Prihom, \bfE f^{-1},\ \bfE f_\ast,\ \bfE f^!,\ \bfE f_{!!}$
for enhanced ind-sheaves on bordered spaces.
Moreover,
for $F\in\BDC(\I\CC_{M_\infty})$ and $K\in\BEC(\I\CC_{M_\infty})$ the objects 
\begin{align*}
\pi^{-1}F\otimes K & :=\Q_{M_\infty}(\pi^{-1}F\otimes \bfL_{M_\infty}^\rmE K),\\
\rihom(\pi^{-1}F, K) & :=\Q_{M_\infty}\big(\rihom(\pi^{-1}F, \bfR_{M_\infty}^\rmE K)\big)
\end{align*}
in $\BEC(\I\CC_{M_\infty})$ are well defined. 
We set $$\CC_{M_\infty}^\rmE := \Q_{M_\infty} 
\Bigl(``\underset{a\to +\infty}{\varinjlim}"\ \CC_{\{t\geq a\}}
\Bigr)\in\BEC(\I\CC_{M_\infty})$$
where $\{t\geq a\}$ stands for $\{(x, t)\in M\times\RR\ |\ t\geq a\}\subset \che{M}\times\var{\RR}$.
Moreover, for a continuous function $\varphi \colon U\to \RR$ defined on an open subset $U\subset M$,
we set 
\[\EE_{U|M_\infty}^\varphi := 
\CC_{M_\infty}^\rmE\Potimes
\Q_{M_\infty}\big(\CC_{\{t+\varphi = 0\}}\big)
, \]
where $\{t+\varphi = 0\}$ stands for 
$\{(x, t)\in \che{M}\times\var{\RR}\ |\ t\in\RR, x\in U, 
t+\varphi(x) = 0\}$. 

We have a natural embedding
$e_{M_\infty} \colon \BDC(\I\CC_{M_\infty}) \to \BEC(\I\CC_{M_\infty})$
defined by \[e_{M_\infty}(F) :=  \CC_{M_\infty}^\rmE\otimes\pi^{-1}F,\]
see \cite[Lem.\:2.8.2]{DK19} (see also \cite[Prop.\:2.20]{KS16-2}) for the details.
Note also that for a morphism $f\colon M_\infty\to N_\infty$ of bordered spaces
and objects $F\in\BDC(\I\CC_{M_\infty})$, $G\in\BDC(\I\CC_{N_\infty})$
we obtain 
\begin{align*}
\bfE f_{!!}(e_{M_\infty}F)
\simeq
e_{N_\infty}(\bfR f_{!!}F),\hspace{19pt}
\bfE f^{-1}(e_{N_\infty}G)
\simeq
e_{M_\infty}(f^{-1}G),\hspace{19pt}
\bfE f^{!}(e_{N_\infty}G)
\simeq
e_{M_\infty}(f^{!}G)
\end{align*}
by using \cite[Prop.\:2.18]{KS16-2}.
Let $i_0 \colon M_\infty\to M_\infty\times\RR_\infty$ be the inclusion map of bordered spaces
induced by $M\to M\times \RR, x\mapsto (x, 0)$.
We set
\[\sh_{M_\infty} := \alpha_{M_\infty}\circ i_0^!\circ \bfR_{M_\infty}^{\rmE} \colon
\BEC(\I\CC_{M_\infty}) \to \BDC(\CC_{M}) \]
and call it the sheafification functor for enhanced ind-sheaves on bordered spaces.
We will use the following fact in \S 3.
\begin{fact}[{\cite[Prop.\:3.8 (i)]{DK21}}]\label{fact_she}
For any $\SF\in\BDC(\CC_M)$,
there exists an isomorphism
$$\SF\simto \sh_{M_\infty}(e_{M_\infty}(\iota_{M_\infty}(\SF))).$$
\end{fact}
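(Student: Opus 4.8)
The plan is to unwind the composite $\sh_{M_\infty}=\alpha_{M_\infty}\circ i_0^{!}\circ\bfR^{\rmE}_{M_\infty}$ on the object $e_{M_\infty}(\iota_{M_\infty}\SF)=\CC^{\rmE}_{M_\infty}\otimes\pi^{-1}\iota_{M_\infty}\SF$ and to produce a natural isomorphism with $\SF$. First I would construct a canonical morphism $\SF\to\sh_{M_\infty}(e_{M_\infty}(\iota_{M_\infty}\SF))$ assembled from the units and counits of the three underlying adjunctions, namely $\alpha_{M_\infty}\dashv\iota_{M_\infty}$, $\Q_{M_\infty}\dashv\bfR^{\rmE}_{M_\infty}$, and the adjunction attached to $i_0^{!}$. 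Since all the functors in sight are standard, this morphism is functorial in $\SF$, so it suffices to prove that it is an isomorphism; and since the question is local on $M$ and the functors commute with the relevant restrictions, one may reduce to the case of a constant coefficient sheaf and argue along the fibre line $\RR_\infty$.

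The heart of the matter is the identity $i_0^{!}\bfR^{\rmE}_{M_\infty}(e_{M_\infty}(\iota_{M_\infty}\SF))\simeq\iota_{M_\infty}\SF$ in $\BDC(\I\CC_{M_\infty})$, after which $\alpha_{M_\infty}$ and the counit $\alpha_{M_\infty}\iota_{M_\infty}\simeq\mathrm{id}$ finish the argument. To establish it I would first pin down a concrete representative of $\bfR^{\rmE}_{M_\infty}(\CC^{\rmE}_{M_\infty})$ in $\BDC(\I\CC_{M_\infty\times\RR_\infty})$. The essential image of $\bfR^{\rmE}_{M_\infty}$ is the right orthogonal to $\pi^{-1}\BDC(\I\CC_{M_\infty})$, i.e. the objects $\tilde K$ with $\rmR\pi_*\tilde K\simeq 0$; starting from the presentation $\CC^{\rmE}_{M_\infty}=\Q_{M_\infty}(``\varinjlim_{a\to+\infty}"\CC_{\{t\geq a\}})$ one modifies it, modulo objects of the form $\pi^{-1}(-)$, into such a representative. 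Using the compatibilities of $\bfR^{\rmE}_{M_\infty}$ and $i_0^{!}$ with the operation $(-)\otimes\pi^{-1}\iota_{M_\infty}\SF$, the computation localises to the costalk at $t=0$ of the half-line models along $\RR_\infty$; since $i_0$ is the codimension-one closed embedding of the zero section, transverse to the level sets $\{t=c\}$, this costalk introduces a degree shift by $[-1]$ along the fibre direction that must be tracked and shown to cancel, so that the outcome is $\iota_{M_\infty}\SF$ rather than a shift of it.

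Finally, applying $\alpha_{M_\infty}$ to the isomorphism of the previous paragraph and invoking $\alpha_{M_\infty}\iota_{M_\infty}\simeq\mathrm{id}$ yields $\sh_{M_\infty}(e_{M_\infty}(\iota_{M_\infty}\SF))\simeq\SF$; tracing through the construction shows that this coincides with the canonical morphism built in the first step, which is therefore an isomorphism.

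The main obstacle is the middle step. Because $\bfR^{\rmE}_{M_\infty}$ is defined only abstractly as the right adjoint of the quotient functor $\Q_{M_\infty}$, producing a usable representative near the zero section and, above all, getting the degree shifts right is delicate: one must ensure that the codimension-one costalk $i_0^{!}$ returns $\iota_{M_\infty}\SF$ exactly, rather than a shift or $0$. In particular the inductive limit $``\varinjlim_{a\to+\infty}"$ defining $\CC^{\rmE}_{M_\infty}$ must be handled with care, since a careless term-by-term passage to the limit (the half-lines $\{t\geq a\}$ recede from $t=0$ as $a\to+\infty$) would wrongly suggest a vanishing answer; it is precisely the right-adjoint normalization $\rmR\pi_*\tilde K\simeq 0$ together with the compensating shifts that recovers $\SF$.
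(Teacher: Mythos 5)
First, a point of comparison: the paper does not prove this statement at all --- it is imported as a Fact from \cite[Prop.\:3.8 (i)]{DK21}, so there is no internal proof to measure your argument against. Judged on its own terms, your outline points in a reasonable direction (unwind $\sh_{M_\infty}=\alpha_{M_\infty}\circ i_0^!\circ\bfR^{\rmE}_{M_\infty}$ on $\CC^{\rmE}_{M_\infty}\otimes\pi^{-1}\iota_{M_\infty}\SF$ and compute the costalk at $t=0$), and you correctly locate the difficulty in producing a usable representative of $\bfR^{\rmE}_{M_\infty}$. But as written this is a plan, not a proof: the central identity $i_0^!\bfR^{\rmE}_{M_\infty}(e_{M_\infty}(\iota_{M_\infty}\SF))\simeq\iota_{M_\infty}\SF$ \emph{is} the content of the statement, and you only describe the obstacles to establishing it (``delicate'', ``must be tracked and shown to cancel'') without resolving them. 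The concrete input you would need is the standard formula $\bfR^{\rmE}_{M_\infty}\Q_{M_\infty}(K_0)\simeq\Prihom(\CC_{\{t\geq 0\}}\oplus\CC_{\{t\leq 0\}},K_0)$ together with the explicit evaluation of these convolutions on the inductive system of the sheaves $\CC_{\{t\geq a\}}\otimes\pi^{-1}\SF$, followed by $i_0^!$; none of that computation is carried out, and it is exactly where the shifts you worry about either cancel or do not.

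Two further substantive problems. (1) The proposed reduction ``to the case of a constant coefficient sheaf'' is not available: $\SF$ is an arbitrary object of $\BDC(\CC_M)$, with no constructibility hypothesis, so it is not locally constant on any stratification and cannot be made constant by localizing on $M$. What is actually needed instead is a projection-formula-type compatibility of $\bfR^{\rmE}_{M_\infty}$ and $i_0^!$ with the operation $(-)\otimes\pi^{-1}\iota_{M_\infty}\SF$, and this commutation (an $\rihom$ against a tensor product) is itself one of the nontrivial points, valid here only because of the very special shape of the kernels $\CC_{\{t\geq a\}}$. (2) Your intermediate claim that the costalk returns $\iota_{M_\infty}\SF$ on the nose in $\BDC(\I\CC_{M_\infty})$ is stronger than what is needed and than what one should expect: $\bfR^{\rmE}_{M_\infty}$ produces genuine ind-objects, and the correct assertion is only that the image under $\alpha_{M_\infty}$ is $\SF$. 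A cleaner route, which avoids the shift bookkeeping entirely, is to use the equivalent description of $\sh_{M_\infty}(K)$ as $\alpha_{M_\infty}$ applied to the enhanced hom from $\CC^{\rmE}_{M_\infty}$ to $K$, i.e.\ $\alpha_{M_\infty}\rmR\pi_*\rihom\bigl(\bfL^{\rmE}_{M_\infty}\CC^{\rmE}_{M_\infty},\bfR^{\rmE}_{M_\infty}K\bigr)$; the global direct image $\rmR\pi_*$ then replaces the codimension-one costalk $i_0^!$, and the computation reduces to $\rmR\pi_*\rihom\bigl(\CC_{\{t\geq a\}},\CC_{\{t\geq b\}}\otimes\pi^{-1}\SF\bigr)\simeq\SF$ for $b\geq a$, which is elementary fibrewise.
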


The following notion was introduced in \cite{DK21}.
\begin{definition}[{\cite[Def.\:3.4 (ii)]{DK21}}]\label{def-sheaftype}
We say that $K\in \BEC(\I\CC_{M_\infty})$ is of sheaf type
if there exists an object $\F\in \BDC(\CC_{M_\infty})$
such that $K\simeq e_{M_\infty}(\iota_{M_\infty}(\F)))$.
\end{definition}

\subsection{$\RR$-Constructible Enhanced Ind-Sheaves} 
We shall recall a notion of the $\RR$-constructibility for enhanced ind-sheaves and results on those.
References are made to \cite{DK16, DK19}.

In this subsection,
we assume that a bordered space $M_\infty = (M, \che{M})$ is a subanalytic bordered space.
Namely, $\che{M}$ is a subanalytic space and $M$ is an open subanalytic subset of $\che{M}$.
See \cite[Def.\:3.1.1]{DK19} for the details.

\begin{definition}[{\cite[Def.\:3.1.2]{DK19}}]
We denote by $\BDC_{\RR\mbox{\scriptsize -}c}(\CC_{M_\infty})$
the full subcategory of $\BDC(\CC_{M_\infty})$
consisting of objects $\SF$ satisfying
$\rmR j_{M_\infty!}\SF$ is an $\RR$-constructible sheaf on $\che{M}$.
\end{definition}

Recall that a subset $Z$ of $M_\infty$ is subanalytic if it is subanalytic in $\che{M}$.
\begin{definition}[{\cite[Def.\:3.3.1]{DK19}}]\label{def2.3}
We say that $K\in\BEC(\I\CC_{M_\infty})$ is $\RR$-constructible
if for any relatively compact subanalytic open subset $U$ of $M_\infty$
there exists an isomorphism $\bfE i_{U_\infty}^{-1}K\simeq \CC_{U_\infty}^{\rmE}\Potimes \SF$
for some $\SF\in\BDC_{\RR\mbox{\scriptsize -}c}(\CC_{U_\infty\times\RR_\infty})$.
\end{definition}

We denote by $\BEC_{\RR\mbox{\scriptsize -}c}(\I\CC_{M_\infty})$
the full triangulated subcategory of $\BEC(\I\CC_{M_\infty})$
consisting of $\RR$-constructible enhanced ind-sheaves.
Note that the triangulated category $\BEC_{\RR\mbox{\scriptsize -}c}(\I\CC_{M_\infty})$
has the standard t-structure
which is induced by the standard t-structure on $\BEC(\I\CC_{M_\infty})$.
Let us denote by $\ZEC_{\RR\mbox{\scriptsize -}c}(\I\CC_{M_\infty})$
the heart of $\BEC_{\RR\mbox{\scriptsize -}c}(\I\CC_{M_\infty})$ with respect to the standard t-structure.

\subsection{$\D$-Modules}
In this section we recall some basic notions and results on $\D$-modules. 
References are made to 
\cite{Bjo93},
\cite[\S\S 8, 9]{DK16},
\cite[\S 7]{KS01},
\cite[\S\S 3, 4, 7]{KS16} for analytic $\D$-modules,
to \cite{Be}, \cite{Bor87}, \cite{HTT08} for algebraic ones.

\subsubsection{Analytic $\D$-Modules}\label{subsec2.4.1}
Let $X$ be a complex manifold
and denote by $d_X$ its complex dimension.
We denote by $\SO_{X}$ the sheaf of holomorphic functions and 
by $\D_{X}$ the sheaf of holomorphic differential operators on $X$.
Let us denote by $\BDC(\D_{X})$ the bounded derived category of left $\D_{X}$-modules. 
Moreover we denote by $\BDCcoh(\D_{X})$,
$\BDChol(\D_{X})$ and $\BDCrh(\D_{X})$ the full triangulated subcategories
of $\BDC(\D_{X})$ consisting of objects with coherent,
holonomic and regular holonomic cohomologies, respectively.
For a morphism $f \colon X\to Y$ of complex manifolds, 
denote by $\Dotimes,\ \bfD f_\ast,\ \bfD f^\ast$ 
the standard operations for analytic $\SD$-modules. 

For an analytic hypersurface $D$ in $X$ we denote by $\SO_{X}(\ast D)$ 
the sheaf of meromorphic functions on $X$ with poles in $D$. 
Then for $\M\in\BDC(\D_{X})$ we set 
$$\M(\ast D) := \M\Dotimes\SO_{X}(\ast D).$$
We say that a $\D_{X}$-module is a meromorphic connection on $X$ along $D$
if it is isomorphic as an $\SO_{X}$-module to a coherent $\SO_{X}(\ast D)$-module.
We denote by $\Conn({X}; D)$ the category of meromorphic connections along $D$
and by $\Conn^\reg({X}; D)$ the category of regular meromorphic connections along $D$.
Moreover, we set
\[\BDCmero(\D_{X(D)}) :=\{\M\in\BDChol(\D_{X})\
|\ \SH^i(\M)\in\Conn(X; D) \mbox{ for any }i\in\ZZ \}.\]

The classical solution functor on $X$ is defined by  
\begin{align*}
\Sol_{X} &\colon \BDCcoh (\D_{X})^{\op}\to\BDC(\CC_{X}),
\hspace{10pt}\M \longmapsto \rhom_{\D_{X}}(\M, \SO_{X}).
\end{align*}

An essential part of the following theorem was proved by Deligne in \cite{De70}.
We denote by ${\rm Loc}(X\setminus D)$ the category of local systems on $X\setminus D$.
The following theorem is used in the proof of Proposition \ref{prop3.1}.

\begin{fact}[{see e.g., \cite[Cor.\:5.2.21]{HTT08}}]\label{RH_Deligne}
There exists an equivalence of abelian categories
$$\mathcal{S}\colon \Conn^\reg({X}; D)\to {\rm Loc}(X\setminus D),\
\M\to \Sol_X(\M)|_{X\setminus D}.$$
\end{fact}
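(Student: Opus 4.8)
The plan is to establish the three defining properties of an equivalence---well-definedness of the functor, full faithfulness, and essential surjectivity---of which the last is the substantive one, realized by Deligne's canonical meromorphic extension. First I would check that $\mathcal{S}$ is well defined. On the dense open set $U:=X\setminus D$ a regular meromorphic connection $\M$ restricts to an integrable connection $(\M|_U,\nabla)$ on a holomorphic vector bundle of rank $r:=\rank\M$. Applying $\Sol_U$ to such a module yields a complex concentrated in degree $0$, and the local existence-and-uniqueness theorem for horizontal sections (Frobenius integrability) identifies $\Sol_X(\M)|_U\simeq\Ker\nabla$ with a $\CC$-local system of rank $r$. Functoriality being clear, $\mathcal{S}$ indeed lands in ${\rm Loc}(X\setminus D)$.

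Next I would treat full faithfulness. A morphism $\varphi\colon\M\to\N$ restricts to a flat morphism on $U$, inducing $\mathcal{S}(\varphi)$; faithfulness follows because the restriction functor $(-)|_U$ is faithful on meromorphic connections along $D$ (the objects are $\SO_X(\ast D)$-coherent and $U$ is dense), so $\mathcal{S}(\varphi)=0$ forces $\varphi|_U=0$ and hence $\varphi=0$. For fullness, a morphism of local systems $\psi\colon\mathcal{S}(\M)\to\mathcal{S}(\N)$ extends uniquely to a horizontal morphism $\M|_U\to\N|_U$, equivalently to a horizontal meromorphic section of $\shom_{\SO_X(\ast D)}(\M,\N)$ over $U$. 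The regularity hypothesis forces this section to have moderate growth along $D$, so it extends across $D$ to a genuine morphism $\M\to\N$ of meromorphic connections restricting to $\psi$.

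The hardest step is essential surjectivity. Given $L\in{\rm Loc}(X\setminus D)$ I would form the flat bundle $(E,\nabla)=\SO_U\otimes_{\CC_U}L$ on $U$ and produce a regular meromorphic extension $\M$ on $X$ along $D$ with $\M|_U\simeq(E,\nabla)$. Using Hironaka's resolution of singularities I reduce to the case where $D$ is a normal crossing divisor; in local coordinates with $D=\{z_1\cdots z_k=0\}$ I build Deligne's lattice $\mathcal{L}$ by prescribing the eigenvalues of each residue in a fixed fundamental domain (say $0\le\Re\lambda<1$), obtaining a logarithmic connection $\nabla(\mathcal{L})\subset\mathcal{L}\otimes\Omega^1_X(\log D)$ whose associated meromorphic connection $\M:=\SO_X(\ast D)\otimes_{\SO_X}\mathcal{L}$ is regular. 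Verifying that these local models glue, that $\M$ is regular holonomic, and that $\mathcal{S}(\M)\simeq L$ then completes the argument.

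The main obstacle I expect is the analytic heart shared by fullness and essential surjectivity: the extension theorem asserting that a horizontal section on $U$ with moderate growth along $D$ extends meromorphically across $D$. This moderate-growth control is precisely what the word ``regular'' encodes, and establishing it---together with the reduction to normal crossings---is where the real work lies; by contrast, the categorical bookkeeping built on top of it is routine.
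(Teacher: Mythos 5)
This statement is quoted in the paper as a \emph{Fact} imported from the literature (Deligne's theorem, in the form of \cite[Cor.\:5.2.21]{HTT08}); the paper supplies no proof of it, so there is nothing internal to compare your argument against. Your outline is, in substance, the standard proof found in the cited sources: restriction to $X\setminus D$ lands in local systems by Frobenius integrability; faithfulness comes from density of $X\setminus D$ and $\SO_X(\ast D)$-coherence; fullness comes from applying the moderate-growth property of horizontal sections of the regular connection $\shom_{\SO_X(\ast D)}(\M,\N)$ together with the fact that a holomorphic function on $X\setminus D$ of moderate growth along $D$ extends meromorphically; and essential surjectivity is Deligne's canonical lattice with residue eigenvalues in a fixed fundamental domain. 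That is the right architecture, and you correctly locate where the real work sits. Two caveats on the sketch itself: first, the reduction of a general hypersurface $D$ to the normal crossing case is not just "resolve and build the lattice upstairs" --- one must also push the extension back down and check that the result is a regular meromorphic connection along the original $D$, which requires the stability of regularity under (proper) direct image or a direct characterization of regularity via curve restriction; second, $\Sol_X$ is contravariant, so as written $\mathcal{S}$ is really an anti-equivalence (or one should insert a dual), a variance slip the paper's own statement also glosses over. Neither point undermines the approach, but both are places where the "routine categorical bookkeeping" hides genuine content.
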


We denote by $\SO_{X}^{\rmE}$ the enhanced ind-sheaf of tempered holomorphic functions
\cite[Def.\:8.2.1]{DK16}
and by $\Sol_{X}^{\rmE}$ the enhanced solution functor on $X$:
\[
\Sol_{X}^\rmE \colon \BDCcoh (\D_{X})^{\op}\to\BEC(\I\CC_{X}), 
\hspace{10pt} 
\M \longmapsto \rihom_{\D_{X}}(\M, \SO_{X}^\rmE) ,
\]
\cite[Def.\:9.1.1]{DK16} (see also \cite[Lem.\:3.15]{Ito21}).
We will use the following facts in \S3.
\begin{fact}[{\cite[the equation just before Thm.\:9.1.2, Prop.\:9.1.3]{DK16}
(see also \cite[Last part of Prop.\:3.14]{Ito21})\footnote{
Remark that the assertion of \cite[Last part of Prop.\:3.14]{Ito21}
was proved without Fact \ref{regRH_alg}.}}]\label{fact_e}
For any $\M\in\BDCrh(\D_{X})$ there exists an isomorphism
$$\Sol_{X}^{\rmE}(\M)\simeq e_{X}\big(\Sol_{X}(\M)\big).$$
\end{fact}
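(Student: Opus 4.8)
\section*{Proof proposal}

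The plan is to prove the isomorphism in two stages: first a \emph{general} identity, valid for all coherent $\M$, stating that the sheafification of the enhanced solution complex recovers the classical one; and second the \emph{regular-specific} input that $\Sol_X^{\rmE}(\M)$ is of sheaf type in the sense of Definition \ref{def-sheaftype}. The key formal point is that these two inputs combine through Fact \ref{fact_she}: if I know $\Sol_X^{\rmE}(\M)\simeq e_X(\iota_X(\SF))$ for some $\SF\in\BDC(\CC_X)$, then applying $\sh_X$ and using Fact \ref{fact_she} gives $\SF\simeq\sh_X\big(\Sol_X^{\rmE}(\M)\big)$; and if I also know $\sh_X\big(\Sol_X^{\rmE}(\M)\big)\simeq\Sol_X(\M)$, then $\SF\simeq\Sol_X(\M)$ and hence $\Sol_X^{\rmE}(\M)\simeq e_X(\Sol_X(\M))$, which is exactly the claim.

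First I would establish the general identity $\sh_X\big(\Sol_X^{\rmE}(\M)\big)\simeq\Sol_X(\M)$ for every $\M\in\BDCcoh(\D_X)$. This should follow from the analogous statement for the coefficient objects, namely $\sh_X(\SO_X^{\rmE})\simeq\SO_X$, together with the compatibility of $\sh_X=\alpha_X\circ i_0^!\circ\bfR_X^{\rmE}$ with the functor $\rihom_{\D_X}(\M,-)$; since the question is local and $\M$ is coherent, I would replace it by a finite free resolution and reduce the compatibility to the rank-one case $\M=\D_X$, where $\Sol_X^{\rmE}(\D_X)\simeq\SO_X^{\rmE}$ and $\Sol_X(\D_X)\simeq\SO_X$.

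The heart of the matter is the second stage: for $\M\in\BDCrh(\D_X)$, show that $\Sol_X^{\rmE}(\M)$ is of sheaf type. Here I would run a d\'evissage exactly along the lines flagged in the introduction. Being of sheaf type is stable under shifts and cones and can be tested locally, so the full subcategory of objects for which it holds is triangulated; arguing by induction on $\dim\supp\M$ and invoking Hironaka's resolution of singularities, I reduce to the case where $\M$ is a regular meromorphic connection $\M\in\Conn^{\reg}(X;D)$ along a normal crossing divisor $D$. Descending the property from a proper modification uses that $\Sol_X^{\rmE}$, $\Sol_X$ and $e_X$ are all compatible with proper pushforward, the last through $\bfE f_{!!}\,e_{X}\simeq e_{Y}\,\bfR f_{!!}$ recorded in the preliminaries.

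Finally, in this base case I would compute $\Sol_X^{\rmE}(\M)$ directly. The essential regularity input is that a regular meromorphic connection carries no exponential factors, so its tempered enhanced holomorphic solutions produce no nontrivial exponential enhancement and are controlled by the ordinary solutions on $X\setminus D$; by Deligne's Fact \ref{RH_Deligne} the latter form a local system, which forces $\Sol_X^{\rmE}(\M)$ to be concentrated at the zero exponential, i.e.\ of sheaf type. The hard part will be precisely this last step, since it is where regularity must be used in an essential way: one must genuinely rule out irregular (exponential) contributions to the enhanced solutions of $\M$ along $D$, whereas the surrounding d\'evissage is formal bookkeeping.
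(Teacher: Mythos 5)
First, note that the paper gives no proof of this statement at all: it is quoted as a Fact from \cite[the equation just before Thm.\:9.1.2, Prop.\:9.1.3]{DK16}, so there is no internal argument to compare yours against. Judged on its own, your reduction scheme is sound and is in fact the same formal mechanism the paper uses elsewhere (your ``combination step'' is exactly the argument of Lemma \ref{keylem} read in reverse): given $\sh_X(\Sol_X^{\rmE}(\M))\simeq\Sol_X(\M)$ (which is Fact \ref{fact_sh}, already available) and the sheaf-type property of $\Sol_X^{\rmE}(\M)$, the isomorphism follows from Fact \ref{fact_she}. The d\'evissage is also fine in outline: sheaf type is stable under shifts and cones because $e_X\circ\iota_X$ is fully faithful and triangulated, and it descends through proper direct images via $\bfE f_{!!}\,e_{X'}\simeq e_{X}\,\bfR f_{!!}$; you should, however, justify (or cite from \cite{DK21}) the claim that sheaf type can be tested locally, since there is no obvious natural morphism between $K$ and $e_X(\iota_X(\sh_X(K)))$ to glue along.

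The genuine gap is the base case, and it is not a small one: for $\M\in\Conn^{\reg}(X;D)$ with $D$ normal crossing, the assertion that ``regularity rules out exponential contributions, hence $\Sol_X^{\rmE}(\M)$ is of sheaf type'' is precisely the content of the Fact being proved, not an input you can appeal to. Concretely, one must show that the tempered holomorphic solutions of $\M$ along $D$ already exhaust all holomorphic solutions and carry no nontrivial $t$-dependence, i.e.\ $\rihom_{\D_X}(\M,\SO_X^{\rmE})\simeq \CC_X^{\rmE}\otimes\pi^{-1}\iota_X(\Sol_X(\M))$. This rests on the moderate-growth/regularity estimates for solutions of regular meromorphic connections (Deligne's classification gives the local system on $X\setminus D$ via Fact \ref{RH_Deligne}, but says nothing by itself about temperedness of the solutions near $D$, which is what $\SO_X^{\rmE}$ sees). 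That analytic statement is the heart of \cite[Prop.\:9.1.3]{DK16} (going back to the tempered Riemann--Hilbert results of Kashiwara and Kashiwara--Schapira), and without supplying it your argument is a reduction of Fact \ref{fact_e} to its own essential case rather than a proof. If your aim is consistency with this paper's program, you would also need to check that whatever estimate you invoke there does not secretly presuppose the equivalence of Fact \ref{regRH_ana}.
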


\begin{fact}[{\cite[Lem.\:9.5.5]{DK16}}]\label{fact_sh}
For $\M\in\BDCcoh(\D_{X})$, 
we have an isomorphism
\[\sh_{X}\big( \Sol_{X}^{\rmE}(\M)\big)\simeq \Sol_{X}(\M).\]
\end{fact}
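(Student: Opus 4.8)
The plan is to push the sheafification functor $\sh_X = \alpha_X\circ i_0^!\circ\bfR_X^{\rmE}$ through the enhanced solution functor and to reduce the assertion to an identification at the level of the coefficient objects, namely $\sh_X(\SO_X^{\rmE})\simeq\SO_X$. Since $\M$ is coherent, the statement is local on $X$, and there I would choose a finite free resolution $\D_X^{\oplus r_\bullet}\to\M$. Applying $\Sol_X^{\rmE}=\rihom_{\D_X}(-,\SO_X^{\rmE})$ (resp.\ $\Sol_X=\rhom_{\D_X}(-,\SO_X)$) turns this resolution into a bounded complex whose terms are finite direct sums of copies of $\SO_X^{\rmE}$ (resp.\ $\SO_X$), the differentials being the (transposed) matrices of differential operators, viewed as endomorphisms of the coefficient object coming from its left $\D_X$-module structure. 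Because $\alpha_X$, $i_0^!$ and $\bfR_X^{\rmE}$ are triangulated and additive, $\sh_X$ commutes with such finite complexes (finite iterated cones); hence it suffices to produce an isomorphism $\sh_X(\SO_X^{\rmE})\simeq\SO_X$ compatible with the $\D_X$-action, so that the induced differentials match.

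To establish the coefficient-level statement I would use that $\bfR_X^{\rmE}$ and $i_0^!$ are right adjoints and therefore commute with $\rihom$. The object $\bfR_X^{\rmE}(\SO_X^{\rmE})$ is the lift of the enhanced tempered holomorphic functions to $\BDC(\I\CC_{X\times\RR_\infty})$, and restriction to $t=0$ identifies $i_0^!\bfR_X^{\rmE}(\SO_X^{\rmE})$ with the ind-sheaf $\SO_X^{\mathrm t}$ of tempered holomorphic functions (up to the relevant shift), by unwinding the definition of $\SO_X^{\rmE}$. One then invokes the fundamental relation $\alpha_X(\SO_X^{\mathrm t})\simeq\SO_X$ together with the fact that the exact functor $\alpha_X$ commutes with $\rhom_{\D_X}(\M,-)$ for coherent $\M$ (again via the finite free resolution). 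Combining these steps gives the chain
\[
\sh_X\!\big(\Sol_X^{\rmE}(\M)\big)\simeq\alpha_X\,\rihom_{\D_X}\!\big(\M,\SO_X^{\mathrm t}\big)\simeq\rhom_{\D_X}\!\big(\M,\alpha_X\SO_X^{\mathrm t}\big)\simeq\rhom_{\D_X}(\M,\SO_X)=\Sol_X(\M).
\]

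As a consistency check, the regular holonomic subcase follows at once from the earlier facts: Fact \ref{fact_e} gives $\Sol_X^{\rmE}(\M)\simeq e_X(\Sol_X(\M))$, and Fact \ref{fact_she} then yields $\sh_X(e_X(\Sol_X(\M)))\simeq\Sol_X(\M)$. The whole point of the general argument is that for an arbitrary coherent $\M$ the enhanced solutions need not be of sheaf type (Definition \ref{def-sheaftype}), so Fact \ref{fact_e} is unavailable and one must argue through the coefficient object $\SO_X^{\rmE}$ instead.

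I expect the main obstacle to be the $\D_X$-equivariant identification $i_0^!\bfR_X^{\rmE}(\SO_X^{\rmE})\simeq\SO_X^{\mathrm t}$: one has to unwind the construction of the enhanced tempered holomorphic functions, track the left $\D_X$-module structures through the adjunction functors $\bfR_X^{\rmE}$ and $i_0^!$, and fix the normalisation so that the resulting isomorphism is genuinely $\D_X$-linear. A secondary delicate point is the commutation of the \emph{left} adjoint $\alpha_X$ with $\rhom_{\D_X}(\M,-)$, which fails for arbitrary coefficients but holds here precisely because coherence of $\M$ reduces the computation to a finite complex on which the exact functor $\alpha_X$ is well behaved.
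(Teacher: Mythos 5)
The paper offers no proof of this statement: it is imported verbatim as a Fact from \cite[Lem.\:9.5.5]{DK16}. Your sketch reconstructs essentially the argument given there --- localize, replace $\M$ by a bounded complex of finite free $\D_X$-modules using coherence, and reduce to the coefficient-level identification $\sh_X(\SO_X^{\rmE})\simeq\SO_X$ obtained from $i_0^!\bfR_X^{\rmE}\SO_X^{\rmE}\simeq\SO_X^{\rmt}$ and $\alpha_X\SO_X^{\rmt}\simeq\SO_X$ --- and you correctly flag the two genuinely delicate points ($\D_X$-linearity of the coefficient identification, and commuting the left adjoint $\alpha_X$ past $\rhom_{\D_X}(\M,-)$, which is exactly where coherence is used). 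So the proposal is sound and follows the same route as the cited source.
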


At the end of this subsection, let us recall the notion of $\M_{\reg}$.
We denote by $\D_{X}^\infty$
the sheaf of rings of differential operators of infinite order on $X$
and set $$\M^\infty := \D_{X}^\infty\otimes _{\D_{X}}\M.$$
Then for a holonomic $\D_{X}$-module $\M$,
a $\D_X$-module 
$$\M_{\reg} :=
\{s\in\M^\infty\ |\ \D_{X}\cdot s\in\Modrh(\D_{X})\}$$
is a regular holonomic $\D_{X}$-module.
Note that we have
$$(\M_{\reg})^\infty\simeq \M^\infty$$
and hence 
$$\Sol_{X}(\M_\reg)\simeq \Sol_{X}(\M).$$
See {\cite[Thm.\:5.2.1]{KK81}}, also {\cite[Prop.\:5.7]{Kas84}} for the details.

\subsubsection{Algebraic $\D$-Modules}
Let $X$ be a smooth algebraic variety over $\CC$
and denote by $d_X$ its complex dimension.
We denote by $\SO_X$ the sheaf of regular functions
and by $\SD_X$ the sheaf of algebraic differential operators on $X$.
Let us denote by $\BDC(\D_X)$ the bounded derived category of left $\D_X$-modules. 
Moreover we denote by $\BDCcoh(\D_X)$,
$\BDChol(\D_X)$ and $\BDCrh(\D_X)$ the full triangulated subcategories
of $\BDC(\D_X)$ consisting of objects with coherent,
holonomic and regular holonomic cohomologies, respectively.
For a morphism $f \colon X\to Y$ of smooth algebraic varieties, 
we denote by $\Dotimes,\ \bfD f_\ast,\ \bfD f^\ast$
the standard operations for algebraic $\SD$-modules. 

We denote by $X^\an$ the underlying complex manifold of $X$
and by $\tl{\iota} \colon (X^\an, \SO_{X^\an})\to(X, \SO_X)$ the morphism of ringed spaces.
Since there exists a morphism $\tl{\iota}^{-1}\SO_X\to\SO_{X^\an}$ of sheaves on $X^\an$,
we have a canonical morphism $\tl{\iota}^{-1}\D_X\to\D_{X^\an}$.
Then we set $$\M^\an := \D_{X^\an}\otimes_{\tl{\iota}^{-1}\D_X}\tl{\iota}^{-1}\M$$
for $\M\in\Mod(\D_X)$
and obtain a functor
$(\cdot)^\an \colon \Mod(\D_X)\to\Mod(\D_{X^\an}).$
It is called the analytification functor on $X$.
Since the sheaf $\D_{X^\an}$ is faithfully flat over $\tl{\iota}^{-1}\D_X$,
the analytification functor is faithful and exact,
and hence we obtain
$$(\cdot)^\an \colon \BDC(\D_X)\to\BDC(\D_{X^\an}).$$
Note that the analytification functor preserves the properties of coherent and holonomic.

At the end of this subsection, we shall recall algebraic meromorphic connections.
Let $D$ be a divisor of $X$,
and $j \colon X\setminus D\hookrightarrow X$ the natural embedding.
Then we set $\SO_X(\ast D) := j_\ast\SO_X$
and also set
$$\SM(\ast D) := \SM\Dotimes\SO_X(\ast D)$$
for $\SM\in\Mod(\SD_X)$.
Note that we have $\M(\ast D)\simeq \bfD j_\ast\bfD j^{\ast}\M$.
We say that a $\D_X$-module is a meromorphic connection on $X$ along $D$
if it is isomorphic as an $\SO_X$-module to a coherent $\SO_X(\ast D)$-module.
We denote by $\Conn(X; D)$ the category of meromorphic connections on $X$ along $D$.
Note that it is the full abelian subcategory of $\Modhol(\D_X)$.
Moreover, we set
\[\BDCmero(\D_{X(D)}) :=\{\M\in\BDChol(\D_X)\
|\ \SH^i(\M)\in\Conn(X; D) \mbox{ for any }i\in\ZZ \}.\]

We note that if $X$ is complete
there exists an equivalence of categories between
the abelian category $\Conn(X; D)$ and
the one of effective meromorphic connections on $X^\an$ along $D^\an$
by \cite[\S 5.3]{HTT08}.
However as a consequence of \cite[Thm.\:3.3.1]{Mal96} 
any analytic meromorphic connection is effective.
Hence we have:
\begin{fact}[{\cite[(5.3.2)]{HTT08}, \cite[Thm.\:3.3.1]{Mal96}}]\label{fact_mero}
If $X$ is complete, 
there exists an equivalence of abelian categories
$$(\cdot)^\an \colon \Conn(X; D)\simto\Conn(X^\an; D^\an).$$
Moreover this induces an equivalence of triangulated categories
$$(\cdot)^\an \colon \BDCmero(\D_{X(D)})\simto\BDCmero(\D_{X^\an(D^\an)}).$$
\end{fact}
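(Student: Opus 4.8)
The plan is to prove the equivalence of abelian categories first and then upgrade it to the triangulated statement by dévissage. The two essential inputs for the abelian part are Serre's GAGA theorem, which applies precisely because $X$ is complete and hence proper over $\CC$, and Malgrange's effectivity theorem \cite[Thm.\,3.3.1]{Mal96}. Roughly, GAGA provides the comparison for coherent $\sho$-modules, the existence of lattices promotes this to meromorphic connections that admit a lattice, and Malgrange's result guarantees that on the analytic side every meromorphic connection admits one, so that no effectivity restriction survives.

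First I would recall that any $\M\in\Conn(X;D)$, being coherent over $\sho_X(\ast D)$, admits a coherent $\sho_X$-lattice $\M_0$ with $\M_0(\ast D)\simeq\M$; here one uses that $X$ is Noetherian. Applying GAGA to such lattices, the bijection $\Hom_{\sho_X}(\M_0,\N_0)\simeq\Hom_{\sho_{X^\an}}(\M_0^\an,\N_0^\an)$ is compatible with passing to $(\ast D)$ and with the connection, and this yields full faithfulness of $(\cdot)^\an$ on $\Conn(X;D)$. For essential surjectivity one identifies the image with the subcategory of \emph{effective} analytic meromorphic connections, i.e.\ those admitting a coherent $\sho_{X^\an}$-lattice: GAGA algebraizes such a lattice together with its connection, which is the content of \cite[\S 5.3]{HTT08}. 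Finally, the effectivity hypothesis is vacuous by \cite[Thm.\,3.3.1]{Mal96}, every analytic meromorphic connection on $X^\an$ along $D^\an$ admitting a lattice; hence $(\cdot)^\an$ is essentially surjective onto all of $\Conn(X^\an;D^\an)$ and we obtain the desired equivalence of abelian categories.

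For the triangulated statement, the functor $(\cdot)^\an$ is exact, hence triangulated, and by the first part it sends $\BDCmero(\D_{X(D)})$ into $\BDCmero(\D_{X^\an(D^\an)})$, since $\SH^i(\M^\an)\simeq(\SH^i(\M))^\an\in\Conn(X^\an;D^\an)$. To see that it is an equivalence I would argue by dévissage on the cohomological amplitude using the truncation triangles. For essential surjectivity, every object of $\BDCmero(\D_{X^\an(D^\an)})$ is assembled from its cohomology objects, each of which lifts by the abelian equivalence, and the lifts glue by induction. For full faithfulness, the truncation triangles reduce the claim to the comparison $\Hom_{\BDChol(\D_X)}(\M,\N[k])\simeq\Hom(\M^\an,\N^\an[k])$ for $\M,\N\in\Conn(X;D)$ and all $k\in\ZZ$; writing these groups as the hypercohomology $\rsect\big(X,\rhom_{\D_X}(\M,\N)\big)$ of a complex with coherent cohomology, the comparison follows from GAGA for coherent cohomology on the proper variety $X$, exactly as in the proof of $\D$-module GAGA.

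The main obstacle is the essential surjectivity at the abelian level, namely the effectivity of analytic meromorphic connections. This is the one genuinely deep analytic input, supplied by \cite[Thm.\,3.3.1]{Mal96}; without it, GAGA alone only produces the equivalence of $\Conn(X;D)$ with the effective subcategory of \cite[\S 5.3]{HTT08}. By contrast, the existence of $\sho_X$-lattices, the GAGA comparison of $\Hom$- and $\mathrm{Ext}$-groups, and the dévissage upgrading the abelian equivalence to the triangulated one are all formal once this effectivity is in hand.
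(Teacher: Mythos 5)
Your proposal follows essentially the same route as the paper, which records this statement as a quoted Fact justified in the two sentences preceding it: the equivalence of $\Conn(X;D)$ with the \emph{effective} analytic meromorphic connections from \cite[\S 5.3]{HTT08}, combined with Malgrange's theorem \cite[Thm.\:3.3.1]{Mal96} that every analytic meromorphic connection is effective, and then the (implicit) d\'evissage to the triangulated level. Your write-up simply fills in the standard GAGA and truncation details that the paper leaves to the references.
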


\subsection{$\CC$-constructible Enhanced Ind-Sheaves}
In this section,  we recall the definition of $\CC$-constructibility for enhanced ind-sheaves
and main results of \cite{Ito20} and \cite{Ito21}.

\subsubsection{Analytic Case}
Let $X$ be a complex manifold and 
$D \subset X$ a normal crossing divisor on it. 
Let us take local coordinates 
$(u_1, \ldots, u_l, v_1, \ldots, v_{d_X-l})$ 
of $X$ such that $D= \{ u_1 u_2 \cdots u_l=0 \}$
and set $Y= \{ u_1=u_2= \cdots =u_l=0 \}.$
We define a partial order $\leq$ on the 
set $\ZZ^l$ by 
$$a \leq a^{\prime} \ \Longleftrightarrow 
\ a_i \leq a_i^{\prime} \ (1 \leq {}^\forall i \leq l),$$
for $a = (a_1, \ldots, a_l),\ a' = (a'_1, \ldots, a'_l) \in \ZZ^l$.
Then for a meromorphic function $\varphi\in\SO_{X}(\ast D)$
on $X$ along $D$ which has the Laurent expansion
\[ \varphi = \sum_{a \in \ZZ^l} c_a( \varphi )(v) \cdot 
u^a \ \in \SO_{X}(\ast D) \]
with respect to $u_1, \ldots, u_{l}$,
where $c_a( \varphi )$ are holomorphic functions on $Y$, 
we define its order 
$\ord( \varphi ) \in \ZZ^l$ by the minimum 
\[ \min \Big( \{ a \in \ZZ^l \ | \
c_a( \varphi ) \not= 0 \} \cup \{ 0 \} \Big) \]
if it exists. 
For any $f\in\SO_{X}(\ast D)/ \SO_{X}$, we take any lift $\tl{f}$ to $\SO_{X}(\ast D)$,
and we set $\ord(f) := \ord(\tl{f})$, if the right-hand side exists.
Note that it is independent of the choice of a lift $\tl{f}$.
If $\ord(f)\neq0$, $c_{\ord(f)}(\tl{f})$ is independent of the choice of a lift $\tl{f}$,
which is denoted by $c_{\ord(f)}(f)$.
\begin{definition}[{\cite[Def.\:2.1.2]{Mochi11}}]\label{def2.10}
In the situation as above,
a finite subset $\calI \subset \SO_{X}(\ast D)/ \SO_{X}$
is called a good set of irregular values on $(X,D)$,
if the following conditions are satisfied:
\begin{itemize}
\setlength{\itemsep}{-3pt}
\item[-]
For each element $f\in\calI$, $\ord(f)$ exists.
If $f\neq0$ in $\SO_{X}(\ast D)/ \SO_{X}$, $c_{\ord(f)}(f)$ is invertible on $Y$.
\item[-]
For two distinct $f, g\in\calI$, $\ord(f-g)$ exists and
$c_{\ord(f-g)}(f-g)$ is invertible on $Y$.
\item[-]
The set $\{\ord(f-g)\ |\ f, g\in\calI\}$ is totally ordered
with respect to the above partial order $\leq$ on $\ZZ^l$.
\end{itemize}
\end{definition}

\begin{definition}[{\cite[Def.\:3.5]{Ito20}}]\label{def-normal}
In the situation as above,
we say that an enhanced ind-sheaf
$K\in\ZEC(\I\CC_X)$ has a normal form along $D$
if the following three conditions are satisfied: 
\begin{itemize}
\setlength{\itemsep}{-1pt}
\item[(i)]
$\pi^{-1}\CC_{X\setminus D}\otimes K\simto K$,

\item[(ii)]
for any $x\in X\setminus D$ there exist an open neighborhood $U_x\subset X\setminus D$
of $x$ and a non-negative integer $k$ such that
$K|_{U_x}\simeq (\CC_{U_x}^{\rmE})^{\oplus k},$

\item[(iii)]
for any $x\in D$ there exist an open neighborhood $U_x\subset X$ of $x$,
a good set of irregular values $\{\varphi_i\}_i$ on $(U_x, D\cap U_x)$
and a finite sectorial open covering $\{U_{x, j}\}_j$ of $U_x\bs D$
such that
\[\pi^{-1}\CC_{U_{x, j}}\otimes K|_{U_x}\simeq
\bigoplus_i \EE_{U_{x, j} | U_x}^{\Re\varphi_i}
\hspace{10pt} \mbox{for any } j.\]
\end{itemize}
\end{definition}

In \cite[Def.\:3.5]{Ito20}, 
we assumed that $K$ is $\RR$-constructible,
see \cite[Def.\:3.3.1]{DK19} (see also Definition \ref{def2.3})
for the definition of $\RR$-constructible enhanced ind-sheaves.
However, it is not necessary:
\begin{proposition}
Any enhanced ind-sheaf which has a normal form along $D$
is an $\RR$-constructible enhanced ind-sheaf.
\end{proposition}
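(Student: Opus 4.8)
The plan is to reduce the global assertion to a purely local one, using the fact that $\RR$-constructibility is checked on relatively compact subanalytic open subsets (Definition \ref{def2.3}), and that the three conditions defining a normal form are themselves local. Let $K\in\ZEC(\I\CC_X)$ have a normal form along $D$. Fix a relatively compact subanalytic open subset $U$ of $X$; I must produce an isomorphism $\bfE i_{U_\infty}^{-1}K\simeq\CC_{U_\infty}^{\rmE}\Potimes\SF$ for some $\SF\in\BDC_{\RR\text{-}c}(\CC_{U_\infty\times\RR_\infty})$. Because $U$ is relatively compact, its closure meets $D$ in a compact set, so after refining I may cover $\var{U}$ by finitely many of the coordinate neighborhoods supplied by conditions (ii) and (iii) of Definition \ref{def-normal}. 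On each such piece $K$ is described explicitly, and the problem becomes showing that each local model is $\RR$-constructible and that $\RR$-constructibility glues over a finite cover.

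The key observation is that the two local models appearing in Definition \ref{def-normal} are both manifestly $\RR$-constructible. On a neighborhood $U_x\subset X\setminus D$ condition (ii) gives $K|_{U_x}\simeq(\CC_{U_x}^{\rmE})^{\oplus k}$, and $\CC_{U_x}^{\rmE}=\CC_{U_x}^{\rmE}\Potimes e_{U_x}(\CC_{U_x})$ is of sheaf type with an $\RR$-constructible (indeed constant) sheaf $\SF=\CC_{U_x\times\RR_\infty}^{\oplus k}$, so this case is immediate. For $x\in D$, condition (iii) expresses $K|_{U_x}$, after tensoring with $\pi^{-1}\CC_{U_{x,j}}$, as a finite direct sum $\bigoplus_i\EE_{U_{x,j}|U_x}^{\Re\varphi_i}$. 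Here each exponential enhanced ind-sheaf $\EE_{U_{x,j}|U_x}^{\Re\varphi_i}=\CC_{U_x}^{\rmE}\Potimes\Q_{U_x}(\CC_{\{t+\Re\varphi_i=0\}})$ has the required form once I check that the sheaf $\CC_{\{t+\Re\varphi_i=0\}}$ on $U_x\times\RR_\infty$ is $\RR$-constructible. This is where the good-set hypothesis enters: the $\varphi_i$ come from a good set of irregular values, so $\Re\varphi_i$ is real-analytic and subanalytic on the sector $U_{x,j}$, whence the graph $\{t+\Re\varphi_i=0\}$ is a subanalytic subset of $U_x\times\RR_\infty$ and its constant sheaf is $\RR$-constructible in the sense of Definition \ref{def2.3}. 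Thus each local model is $\RR$-constructible.

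The remaining step is to promote the local $\RR$-constructibility to the single local chart $U_x$, and then to the test set $U$. For the passage from the sectorial pieces $U_{x,j}$ back to $U_x$ in case (iii) I would invoke that $\RR$-constructibility of enhanced ind-sheaves is stable under the standard operations and can be detected after applying $\pi^{-1}\CC_{U_{x,j}}\otimes(-)$ over a finite subanalytic cover $\{U_{x,j}\}_j$ of $U_x\setminus D$, together with condition (i) which guarantees $\pi^{-1}\CC_{X\setminus D}\otimes K\simto K$ so that no contribution is supported on $D$ itself. Finally, since $\BEC_{\RR\text{-}c}(\I\CC_X)$ is a full triangulated subcategory closed under the relevant operations, and $\RR$-constructibility is a condition tested locally on relatively compact subanalytic opens, the finitely many local isomorphisms assemble: restricting to the fixed $U$ and using that $U$ is covered by finitely many $U_x$, I obtain the desired presentation of $\bfE i_{U_\infty}^{-1}K$, proving $K\in\BEC_{\RR\text{-}c}(\I\CC_X)$.

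I expect the main obstacle to be the gluing in the third paragraph: one must argue that the local descriptions, which hold only after tensoring with $\pi^{-1}\CC_{U_{x,j}}$ on sectors and are a priori defined on overlapping charts, patch to give a genuine $\RR$-constructible structure on all of $U$ without introducing singularities along $D$. Controlling the behavior across the sectorial boundaries and along $D$ — precisely the place where the exponential factors $\EE^{\Re\varphi_i}$ interact — is the delicate point, and the good-set conditions of Definition \ref{def2.10} (total ordering of the orders $\ord(\varphi_i-\varphi_j)$ and invertibility of the leading coefficients) are exactly what make this subanalytic bookkeeping tractable.
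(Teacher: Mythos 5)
Your overall strategy is the same as the paper's: localize, observe that the two local models of Definition \ref{def-normal} are $\RR$-constructible, and then recover $K|_{U_x}$ from its sectorial pieces. Two remarks. First, the step you flag as the ``main obstacle'' --- patching the sectorial descriptions across their boundaries and along $D$ --- is not where any work happens, and the good-set conditions of Definition \ref{def2.10} play no role there. The paper's resolution is purely formal: by condition (i) one has $K|_{U_x}\simeq\pi^{-1}\CC_{U_x\setminus D}\otimes K|_{U_x}$, the sheaf $\CC_{U_x\setminus D}$ is an iterated extension (Mayer--Vietoris, \cite[Prop.\:2.3.6 (vii)]{KS90}) of the $\CC_V$ for $V$ running over intersections of the finitely many sectors $U_{x,j}$, and $\RR$-constructibility is closed under distinguished triangles and under $\pi^{-1}\CC_V\otimes(-)$ for subanalytic $V$ (\cite[Prop.\:4.9.3]{DK16}); so it suffices that each $\pi^{-1}\CC_{U_{x,j}}\otimes K|_{U_x}$ be $\RR$-constructible, which condition (iii) gives directly. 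There is no need to control how the exponential factors interact across sector boundaries, because one never assembles a single global model. Second, your justification that each $\EE_{U_{x,j}|U_x}^{\Re\varphi_i}$ is $\RR$-constructible via subanalyticity of $\{t+\Re\varphi_i=0\}$ is plausible but incomplete as stated: real-analyticity of $\Re\varphi_i$ on the open sector does not by itself give subanalyticity of the closure of that locus in $\che{U_x}\times\var{\RR}$, which is what Definition \ref{def2.3} requires; the paper instead identifies $\EE_{U_x\setminus D|U_x}^{\Re\varphi}$ as $\Sol^{\rmE}$ of a meromorphic connection and quotes Fact \ref{irregRH_DK} together with \cite[Cor.\:9.4.12]{DK16}. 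With these two points repaired your argument matches the paper's.
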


\begin{proof}
Let $K\in\ZEC(\I\CC_X)$ be an enhanced ind-sheaf which has a normal form along $D$.
Since the $\RR$-constructibility of enhanced ind-sheaves is a local property
(see \cite[Cor.\:4.9.8]{DK16} for details),
it is enough to show that for any $x\in X$ there exists an open subset $U_x\subset X$ of $x$
such that $K|_{U_x}
\in\ZEC_{\RR\mbox{\scriptsize -}c}(\I\CC_{U_x})$.

Since $K$ satisfies the condition (ii) in Definition \ref{def-normal} and
the constant enhanced ind-sheaf $\CC^\rmE$ is $\RR$-constructible,
for any $x\in X\setminus D$ there exists an open neighborhood $U_x\subset X\setminus D$ of $x$
such that $K|_{U_x}
\in\ZEC_{\RR\mbox{\scriptsize -}c}(\I\CC_{U_x})$.

Since $K$ satisfies the condition (iii) in Definition \ref{def-normal},
for any $x\in D$ there exist an open neighborhood $U_x\subset X$,
$L_x
\in\ZEC_{\RR\mbox{\scriptsize -}c}(\I\CC_{U_x})$ and 
a finite sectorial open covering $\{U_{x, j}\}$ of $U_x\setminus D$ such that
$$\pi^{-1}\CC_{U_{x, j}}\otimes K|_{U_x}\simeq \pi^{-1}\CC_{U_{x, j}}\otimes L_x$$
for any $j$.
Here we used the fact that the enhanced ind-sheaf
$\EE_{U_{x}\setminus D | U_x}^{\Re\varphi}$ is $\RR$-constructible
for any meromorphic function $\varphi\in\SO_{U_x}(\ast(D\cap U_x))$,
by Fact \ref{irregRH_DK} and \cite[Cor.\:9.4.12]{DK16}.
We shall show that $K|_{U_x}$ is $\RR$-constructible.
Note that since $K$ satisfies the condition (i) in Definition \ref{def-normal}
we have $$K|_{U_x}\simeq\pi^{-1}\CC_{U_x\setminus D}\otimes K|_{U_x}.$$
Hence by using \cite[Prop.\:4.9.3]{DK16} and
the Mayer--Vietoris sequence for sheaves (see e.g., \cite[Prop.\:2.3.6 (vii)]{KS90}),
it is enough to prove that
$\pi^{-1}\CC_{U_{x, j}}\otimes K|_{U_x}
\in\ZEC_{\RR\mbox{\scriptsize -}c}(\I\CC_{U_x})$.
However, this follows from $\pi^{-1}\CC_{U_{x, j}}\otimes L_x
\in\ZEC_{\RR\mbox{\scriptsize -}c}(\I\CC_{U_x})$.
\end{proof}

A ramification of $X$ along a normal crossing divisor $D$ on a neighborhood $U$ 
of $x \in D$ is a finite map $r \colon U^{\rami}\to U$ of complex manifolds of the form
$z' \mapsto z=(z_1,z_2, \ldots, z_n)= 
 r(z') = (z'^{m_1}_1,\ldots, z'^{m_k}_k, z'_{k+1},\ldots,z'_n)$ 
for some $(m_1, \ldots, m_k)\in (\ZZ_{>0})^k$, where 
$(z'_1,\ldots, z'_n)$ is a local coordinate system of $U^{\rami}$ and 
$(z_1, \ldots, z_n)$ is the one of 
$U$ such that $D \cap U=\{z_1\cdots z_k=0\}$. 

\begin{definition}[{\cite[Def.\:3.11]{Ito20}}]\label{def-quasi}
We say that an enhanced ind-sheaf
$K\in\ZEC(\I\CC_{X})$ has a quasi-normal form along $D$
if it satisfies (i) and (ii) in Definition \ref{def-normal}, and if
for any $x\in D$ there exist an open neighborhood $U_x\subset X$ of $x$
and a ramification $r_x \colon U_x^{\rami}\to U_x$ of $U_x$ along $D_x := U_x\cap D$
such that $\bfE r_x^{-1}(K|_{U_x})$ has a normal form along $D_x^{\rami}:= r_x^{-1}(D_x)$.
\end{definition}
Note that any enhanced ind-sheaf which has a quasi-normal form along $D$
is an $\RR$-constructible enhanced ind-sheaf on $X$.
See \cite[Prop.\:3.12]{Ito20} for the details. 

\newpage

A modification of $X$ with respect to an analytic hypersurface $H$
is a projective map $m \colon X^{\modi}\to X$ from a complex manifold $X^{\modi}$ to $X$ such that
$D^{\modi} := m^{-1}(H)$ is a normal crossing divisor of $X^{\modi}$
and $m$ induces an isomorphism $X^{\modi}\setminus D^{\modi}\simto X\setminus H$.

\begin{definition}[{\cite[Def.\:3.14]{Ito20}}]\label{def-modi}
We say that an enhanced ind-sheaf $K\in\ZEC(\I\CC_{X})$
has a modified quasi-normal form along $H$
if it satisfies (i) and (ii) in Definition \ref{def-normal}, and if
for any $x\in H$ there exist an open neighborhood $U_x\subset X$ of $x$
and a modification $m_x \colon U_x^{\modi}\to U_x$ of $U_x$ along $H_x := U_x\cap H$
such that $\bfE m_x^{-1}(K|_{U_x})$ has a quasi-normal form along $D_x^{\modi} := m_x^{-1}(H_x)$.
\end{definition}
Note that any enhanced ind-sheaf which has a modified quasi-normal form along $H$
is an $\RR$-constructible enhanced ind-sheaf on $X$. 
See \cite[Prop.\:3.15]{Ito20} for the details. 
Moreover we have:
\begin{proposition}[{\cite[Lem.\:3.16]{Ito20}}]\label{lem-modi}
The enhanced solution functor $\Sol_{X}^{\rmE}$ induces
an equivalence of abelian categories between
the full subcategory of $\ZEC(\I\CC_{X})$ consisting of
objects which have a modified quasi-normal form along $H$
and the abelian category $\Conn(X; H)$ of
meromorphic connections on $X$ along $H$.
\end{proposition}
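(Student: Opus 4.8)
The plan is to deduce the statement from the fully faithful embedding of Fact \ref{irregRH_DK}. Since $\Conn(X; H)$ is a full abelian subcategory of $\Modhol(\D_X)$, the enhanced solution functor $\Sol_X^{\rmE}$ is automatically fully faithful on it, and it carries the abelian category $\Conn(X;H)$ into the heart $\ZEC(\I\CC_X)$. Thus the entire content is the identification of essential images: I must show (I) that $\Sol_X^{\rmE}(\M)$ has a modified quasi-normal form along $H$ for every $\M\in\Conn(X;H)$, and (II) that conversely every object with a modified quasi-normal form arises in this way. Once both inclusions are established, full faithfulness upgrades the resulting identification of objects into an equivalence of categories, which is an equivalence of abelian categories because the abelian structure on the target subcategory is the one induced from $\ZEC(\I\CC_X)$.

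First I would verify (I). Conditions (i) and (ii) of Definition \ref{def-normal} are the generic conditions and hold because $\M\simeq\M(\ast H)$ is localized along $H$ and restricts to an integrable connection on $X\setminus H$: localization forces $\pi^{-1}\CC_{X\setminus H}\otimes\Sol_X^{\rmE}(\M)\simto\Sol_X^{\rmE}(\M)$, while on $X\setminus H$ the module is regular, so Fact \ref{fact_e} gives $\Sol_X^{\rmE}(\M)|_{X\setminus H}\simeq e_{X\setminus H}\bigl(\Sol_X(\M)|_{X\setminus H}\bigr)$ with $\Sol_X(\M)|_{X\setminus H}$ a local system, hence locally of the form $(\CC^{\rmE})^{\oplus k}$. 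For the modification condition of Definition \ref{def-modi} I would invoke the resolution of turning points of Kedlaya and Mochizuki: for each $x\in H$ there is a local modification $m_x\colon U_x^{\modi}\to U_x$ such that the pulled-back connection $\bfD m_x^\ast\M$ acquires a good formal structure, hence a quasi-normal form after a suitable ramification. Using the compatibility of the enhanced solution functor with pullback (up to shift), $\bfE m_x^{-1}\Sol_X^{\rmE}(\M)|_{U_x}\simeq\Sol^{\rmE}(\bfD m_x^\ast\M)$ then has a quasi-normal form in the sense of Definition \ref{def-quasi}. The analytic core here is the enhanced Hukuhara--Turrittin theorem, namely the identification $\Sol^{\rmE}(\E^\varphi)\simeq\EE^{\Re\varphi}$ (up to shift) for an elementary model $\E^\varphi=(\SO(\ast D),\,d+d\varphi)$ together with the sectorial decomposition of a good meromorphic connection into such models; this supplies the sectorial covering and the direct sum required by Definition \ref{def-normal}(iii).

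The harder direction is (II). I would argue locally near $H$ and then glue. If $K$ has a modified quasi-normal form, then near $x\in H$ the object $\bfE m_x^{-1}(K|_{U_x})$ has a quasi-normal form, and the normal/quasi-normal cases of the correspondence identify it with $\Sol^{\rmE}(\N_x)$ for a meromorphic connection $\N_x$ on $U_x^{\modi}$ along $D_x^{\modi}$ (each summand $\EE^{\Re\varphi_i}$ being the enhanced solution of the model $\E^{\varphi_i}$, and the sectorial gluing data descending to an honest connection by full faithfulness). I would then set $\M_x:=\bfD m_{x\ast}\N_x$; since $m_x$ is projective and is an isomorphism over $U_x\setminus H$, this is a meromorphic connection on $U_x$ along $H_x$, and compatibility of $\Sol^{\rmE}$ with the pushforward together with conditions (i) and (ii) yields $\Sol^{\rmE}(\M_x)\simeq\bfE m_{x!!}\bfE m_x^{-1}(K|_{U_x})\simeq K|_{U_x}$. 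Finally the local connections $\M_x$ glue: on overlaps the isomorphisms $\Sol^{\rmE}(\M_x)\simeq K|_{U_x}$ and full faithfulness produce $\D$-module isomorphisms satisfying the cocycle condition (a cocycle in $\ZEC(\I\CC_X)$ descends to one for $\D$-modules, again by full faithfulness), so the $\M_x$ patch to a global $\M\in\Conn(X;H)$ with $\Sol_X^{\rmE}(\M)\simeq K$.

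The main obstacle is twofold. At the bottom level, the normal-form case rests on the enhanced Hukuhara--Turrittin--Sibuya analysis: the precise sectorial asymptotic decomposition of a good meromorphic connection and the identification $\Sol^{\rmE}(\E^\varphi)\simeq\EE^{\Re\varphi}$, which is the genuinely analytic input. The second delicate point is the descent and gluing: one must check that $\bfE m_{x!!}\bfE m_x^{-1}(K|_{U_x})\simeq K|_{U_x}$ (made possible by $m_x$ being an isomorphism away from $H$ together with condition (i)), that $\bfD m_{x\ast}\N_x$ stays concentrated in degree zero and meromorphic, and that the cocycle conditions transport correctly through $\Sol^{\rmE}$ --- all of which rely on the full faithfulness of Fact \ref{irregRH_DK} to convert enhanced ind-sheaf data back into $\D$-module data.
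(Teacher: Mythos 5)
The paper does not actually prove this proposition: it is imported verbatim from [Ito20, Lem.\ 3.16], so there is no in-paper argument to compare against. Your sketch reconstructs essentially the strategy of that reference --- d\'evissage through the three levels (normal, quasi-normal, modified quasi-normal form), the enhanced Hukuhara--Turrittin identification $\Sol^{\rmE}(\E^{\varphi})\simeq\EE^{\Re\varphi}$ together with Kedlaya--Mochizuki resolution of turning points for the ``image'' direction, and descent through ramifications/modifications plus gluing via full faithfulness for essential surjectivity --- and it is correct in outline. Two cautions. First, the sectorial gluing at the normal-form level is not a formal consequence of the global full faithfulness of Fact~\ref{irregRH_DK}: the transition data live on overlaps of sectors touching the divisor, so what is needed is the computation of morphisms between the exponential objects $\EE^{\Re\varphi_i}_{V|U}$ on such sectors and its comparison with flat sections of the corresponding $\D$-module Hom ([DK16, \S 9.3], resting on Mochizuki's asymptotic analysis); you correctly name Hukuhara--Turrittin as the analytic core, but then lean on Fact~\ref{irregRH_DK} for a step it does not literally cover. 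Second, your direction (II) silently skips the intermediate descent through the ramification $r_x$ of Definition~\ref{def-quasi} (quasi-normal form $\Rightarrow$ meromorphic connection), where one realizes $K|_{U_x}$ as a direct summand of $\bfE r_{x!!}\bfE r_x^{-1}(K|_{U_x})$ using that $\CC_{U_x\setminus D}$ is a direct summand of $\bfR r_{x\ast}\CC_{U_x^{\rami}\setminus D^{\rami}}$; this step is of the same flavour as, but not identical to, the modification step $\bfE m_{x!!}\bfE m_x^{-1}(K|_{U_x})\simeq K|_{U_x}$ that you do describe. Neither point changes the architecture of the argument, which matches the cited proof.
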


We denote by $\ZECmero(\I\CC_{X(H)})$ the essential image of
$$\Sol_{X}^{\rmE} \colon \Conn(X; H)^{\op}\to\ZEC(\I\CC_{X}).$$
This abelian category is nothing but the full subcategory of $\ZEC(\I\CC_{X})$
consisting of enhanced ind-sheaves which have a modified quasi-normal form along $H$
by Proposition \ref{lem-modi}.
Moreover, we set
\begin{align*}
\BECmero(\I\CC_{X(H)}) &:=\{K\in\BEC_{\RR\mbox{\scriptsize -}c}(\I\CC_{X})\
|\ \SH^i(K)\in\ZECmero(\I\CC_{X(H)}) \mbox{ for any }i\in\ZZ \}.
\end{align*}

Since the category $\BDCmero(\D_{X(H)})$ is the full triangulated subcategory
of $\BDChol(\D_{X})$
and the category $\BECmero(\I\CC_{X(H)})$ is the full triangulated subcategory
of $\BEC_{\RR\mbox{\scriptsize -}c}(\I\CC_{X})$, the following proposition is obvious
by induction on the length of a complex:
\begin{proposition}\label{prop2.23}
The enhanced solution functor $\Sol_{X}^\rmE$ induces an equivalence of triangulated categories
$$\BDCmero(\D_{X(H)})^{\op} \simto \BECmero(\I\CC_{X(H)}).$$
\end{proposition}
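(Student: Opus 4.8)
The plan is to deduce the statement from the heart-level equivalence of Proposition \ref{lem-modi} by a standard dévissage, using that $\Sol_X^{\rmE}$ is already known to be fully faithful on $\BDChol(\D_X)^{\op}$. First I would record the relevant structural facts. On the $\D$-module side, $\Conn(X;H)$ is a full abelian subcategory of $\Modhol(\D_X)$ that is closed under extensions: localization $\M\mapsto\M(\ast H)$ is exact and restricts to the identity on objects of $\Conn(X;H)$, so the five lemma shows that an extension of two meromorphic connections is again isomorphic to its localization. Hence $\BDCmero(\D_{X(H)})$ is a full triangulated subcategory of $\BDChol(\D_X)$ carrying the induced standard t-structure with heart $\Conn(X;H)$; likewise $\BECmero(\I\CC_{X(H)})$ is the corresponding subcategory on the enhanced side with heart $\ZECmero(\I\CC_{X(H)})$. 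Finally, $\Sol_X^{\rmE}$ is a contravariant triangulated functor, and by Proposition \ref{lem-modi} it sends an object of $\Conn(X;H)$, placed in degree $0$, into the heart $\ZECmero(\I\CC_{X(H)})$.

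Next I would verify that $\Sol_X^{\rmE}$ maps $\BDCmero$ into $\BECmero$. Since $\Sol_X^{\rmE}$ is triangulated and carries the heart $\Conn(X;H)$ into the heart $\ZECmero$, applying it to the truncation triangles of $\M\in\BDCmero$ and reading off the long exact cohomology sequence yields the contravariant t-exactness $\SH^j(\Sol_X^{\rmE}(\M))\simeq \Sol_X^{\rmE}(\SH^{-j}(\M))$. As each $\SH^{-j}(\M)\in\Conn(X;H)$, Proposition \ref{lem-modi} shows that every cohomology of $\Sol_X^{\rmE}(\M)$ lies in $\ZECmero$, while $\Sol_X^{\rmE}(\M)\in\BEC_{\RR\mbox{\scriptsize -}c}(\I\CC_X)$ by Fact \ref{irregRH_DK}; thus $\Sol_X^{\rmE}(\M)\in\BECmero$. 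Full faithfulness of the induced functor $\BDCmero(\D_{X(H)})^{\op}\to\BECmero(\I\CC_{X(H)})$ is then immediate, being the restriction to a full subcategory of the fully faithful functor $\Sol_X^{\rmE}\colon\BDChol(\D_X)^{\op}\hookrightarrow\BEC_{\RR\mbox{\scriptsize -}c}(\I\CC_X)$ of Fact \ref{irregRH_DK} (equivalently Theorem \ref{irregRH_ana}).

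It remains to prove essential surjectivity, which I would carry out by induction on the cohomological amplitude of $K\in\BECmero$. If $K$ is concentrated in a single degree, then $K\simeq L[-a]$ with $L\in\ZECmero$, and Proposition \ref{lem-modi} produces $N\in\Conn(X;H)$ with $\Sol_X^{\rmE}(N)\simeq L$, whence $K\simeq\Sol_X^{\rmE}(N[a])$. For the inductive step I would use the truncation triangle $\tau^{\le b-1}K\to K\to \SH^b(K)[-b]\xrightarrow{+1}\tau^{\le b-1}K[1]$, obtaining $\M_1,\M_2\in\BDCmero$ with $\Sol_X^{\rmE}(\M_1)\simeq\tau^{\le b-1}K$ and $\Sol_X^{\rmE}(\M_2)\simeq\SH^b(K)[-b]$ by induction, lifting the connecting morphism through full faithfulness to a morphism in $\BDChol(\D_X)$, and letting $\M$ be the remaining vertex of the resulting triangle; then $\M\in\BDCmero$ by the extension-closedness recorded above, and $\Sol_X^{\rmE}(\M)\simeq K$ by the triangle axioms.

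The only genuine input is Proposition \ref{lem-modi}, the heart-level equivalence $\Sol_X^{\rmE}\colon\Conn(X;H)^{\op}\simto\ZECmero(\I\CC_{X(H)})$; once this and the full faithfulness on $\BDChol(\D_X)^{\op}$ are in hand, the passage to the derived level is purely formal. I therefore expect the main difficulty to be bookkeeping rather than mathematics: tracking the degree shifts and the reversal of arrows forced by the contravariance of $\Sol_X^{\rmE}$, and confirming that truncations, cones, and extensions of objects of $\BDCmero$ again have all their cohomologies in $\Conn(X;H)$, so that the dévissage never leaves the subcategories in play.
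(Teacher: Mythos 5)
Your proposal is correct and is essentially the paper's own argument: the paper simply declares the proposition ``obvious by induction on the length of a complex,'' given that $\BDCmero(\D_{X(H)})$ and $\BECmero(\I\CC_{X(H)})$ are full triangulated subcategories and that Proposition \ref{lem-modi} gives the heart-level equivalence. Your write-up just supplies the d\'evissage details (extension-closedness, t-exactness, lifting connecting morphisms via the full faithfulness of Fact \ref{irregRH_DK}) that the paper leaves implicit.
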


A complex analytic stratification of $X$ is
a locally finite partition $\{X_\alpha\}_{\alpha\in A}$ of $X$
by locally closed analytic subsets $X_\alpha$
such that for any $\alpha\in A$, $X_\alpha$ is smooth,
$\var{X}_\alpha$ and $\partial{X_\alpha} := \var{X}_\alpha\setminus X_\alpha$
are complex analytic subsets 
and $\var{X}_\alpha = \bigsqcup_{\beta\in B} X_{\beta}$ for a subset $B\subset A$.
\begin{definition}[{\cite[Def.\:3.19]{Ito20}}]\label{def-const}
We say that an enhanced ind-sheaf $K\in\ZEC(\I\CC_{X})$ is $\CC$-constructible if
there exists a complex analytic stratification $\{X_\alpha\}_\alpha$ of $X$
such that
$$\pi^{-1}\CC_{\var{X}^{\blow}_\alpha\setminus D_\alpha}\otimes \bfE b_\alpha^{-1}K$$
has a modified quasi-normal form along $D_\alpha$ for any $\alpha$,
where $b_\alpha \colon \var{X}^{\blow}_\alpha \to X$ is a complex blow-up of $\var{X_\alpha}$
along $\partial X_\alpha = \var{X_\alpha}\setminus X_\alpha$ and
$D_\alpha := b_\alpha^{-1}(\partial X_\alpha)$.
Namely $\var{X}^{\blow} _\alpha$ is a complex manifold,
$D_\alpha$ is a normal crossing divisor of $\var{X}^{\blow} _\alpha$
and $b_\alpha$ is a projective map
which induces an isomorphism $\var{X}^{\blow} _\alpha\setminus D_\alpha\simto X_\alpha$
and satisfies $b_\alpha\big(\var{X}^{\blow} _\alpha\big)=\var{X_\alpha}$.

We call such a family $\{X_\alpha\}_{\alpha\in A}$
a complex analytic stratification adapted to $K$.
\end{definition}

\begin{remark}\label{rem2.25}
Definition \ref{def-const} does not depend on the choice of a complex blow-up $b_\alpha$
by \cite[Sublem.\:3.22]{Ito20}.
\end{remark}

We denote by $\ZEC_{\CC\mbox{\scriptsize -}c}(\I\CC_{X})$ the full subcategory of $\ZEC(\I\CC_{X})$
whose objects are $\CC$-constructible
and set
\[\BEC_{\CC\mbox{\scriptsize -}c}(\I\CC_{X}) := \{K\in\BEC(\I\CC_{X})\
|\ \SH^i(K)\in\ZEC_{\CC\mbox{\scriptsize -}c}(\I\CC_{X}) \mbox{ for any }i\in\ZZ \}\subset \BEC(\I\CC_{X}).\]
Note that 
the category $\BEC_{\CC\mbox{\scriptsize -}c}(\I\CC_{X})$ is
the full triangulated subcategory of $\BEC_{\RR\mbox{\scriptsize -}c}(\I\CC_{X})$.
See \cite[Prop.\:3.21]{Ito20} for the details.

\begin{theorem}[{\cite[Prop.\:3.25, Thm.\:3.26]{Ito20}}]\label{thm-Ito20}
For any $\M\in\BDChol(\D_{X})$, the enhanced solution complex $\Sol_{X}^\rmE(\M)$
of $\M$ is a $\CC$-constructible enhanced ind-sheaf.

On the other hand,
for any $\CC$-constructible enhanced ind-sheaf $K\in\BEC_{\CC\mbox{\scriptsize -}c}(\I\CC_{X})$,
there exists $\M\in\BDChol(\D_{X})$
such that $$K\simto \Sol_{X}^{\rmE}(\M).$$
Therefore we obtain an equivalence of triangulated categories
$$\Sol_{X}^{\rmE}\colon \BDChol(\D_{X})^{\op}\to \BEC_{\CC\mbox{\scriptsize -}c}(\I\CC_{X}).$$
\end{theorem}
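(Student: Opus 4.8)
The plan is to deduce the equivalence from the full faithfulness of $\Sol_X^{\rmE}$ already granted by Fact \ref{irregRH_DK} together with a precise identification of its essential image. Since $\Sol_X^{\rmE}\colon\BDChol(\D_X)^{\op}\hookrightarrow\BEC_{\RR\mbox{\scriptsize -}c}(\I\CC_X)$ is fully faithful and $\BEC_{\CC\mbox{\scriptsize -}c}(\I\CC_X)$ is a full subcategory of the target, it suffices to prove the two displayed claims: that every $\Sol_X^{\rmE}(\M)$ is $\CC$-constructible, and that every $\CC$-constructible object arises this way. The first gives that the essential image lands in $\BEC_{\CC\mbox{\scriptsize -}c}(\I\CC_X)$, the second gives essential surjectivity onto it, and together with full faithfulness they yield the claimed equivalence.

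For the first claim I would argue by dévissage on the cohomological degree, using that $\BECmero$ and its ambient categories are triangulated, to reduce to a single holonomic module $\M$. Choosing a complex analytic stratification $\{X_\alpha\}$ adapted to the characteristic variety of $\M$, the module restricts to a meromorphic connection transverse to each stratum; after pulling back by the blow-up $b_\alpha$ of $\var{X_\alpha}$ along $\partial X_\alpha$, the structure theorem for holonomic $\D$-modules (the good formal structure of Kedlaya and Mochizuki, realized after a ramification $r_x$ and a modification $m_x$) puts $\bfD b_\alpha^{\ast}\M$ into the shape of a good meromorphic connection along $D_\alpha$. On such a connection $\Sol^{\rmE}$ produces exactly the exponential enhanced ind-sheaves $\EE^{\Re\varphi_i}$ indexed by the irregular values, i.e. a normal form, so that $\pi^{-1}\CC_{\var{X}^{\blow}_\alpha\setminus D_\alpha}\otimes\bfE b_\alpha^{-1}\Sol_X^{\rmE}(\M)$ has a modified quasi-normal form along $D_\alpha$; this is precisely Definition \ref{def-const}.

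For the second claim I would run the construction in reverse, stratum by stratum. Given a $\CC$-constructible $K$ with adapted stratification $\{X_\alpha\}$, the object $\pi^{-1}\CC_{\var{X}^{\blow}_\alpha\setminus D_\alpha}\otimes\bfE b_\alpha^{-1}K$ has a modified quasi-normal form, so by Proposition \ref{lem-modi} (equivalently Proposition \ref{prop2.23}) it equals $\Sol^{\rmE}$ of a meromorphic connection $\M_\alpha\in\Conn(\var{X}^{\blow}_\alpha; D_\alpha)$. Pushing these forward by the projective maps $b_\alpha$ and assembling the pieces along the filtration of $X$ by closed unions of strata, using the recollement triangles and the full faithfulness of $\Sol_X^{\rmE}$ to verify that the gluing data are compatible, should produce a holonomic complex $\M\in\BDChol(\D_X)$ with $\Sol_X^{\rmE}(\M)\simeq K$.

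The main obstacle is this second claim. The local connections $\M_\alpha$ live on distinct blow-ups $\var{X}^{\blow}_\alpha$ and a priori agree only over the open strata, so the delicate point is to glue them into a single global holonomic object on $X$ by induction on the dimension of the strata, and then to confirm — invoking the full faithfulness of Fact \ref{irregRH_DK} so that morphisms between the glued modules match morphisms of their enhanced solutions — that $\Sol_X^{\rmE}$ of the assembled module reproduces $K$ across the stratum boundaries and not merely on the open dense stratum. The first claim, by contrast, is essentially local and reduces to the normal-form computation once the Kedlaya--Mochizuki reduction is in hand.
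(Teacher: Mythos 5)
The paper does not prove this statement: it is imported verbatim from \cite[Prop.\:3.25, Thm.\:3.26]{Ito20} and used as a black box, so there is no internal proof to compare against. Your sketch does reproduce the strategy of the cited work: full faithfulness is taken from Fact \ref{irregRH_DK}, the first claim is reduced by d\'evissage and an adapted stratification to the case of a meromorphic connection on the blow-up $\var{X}^{\blow}_\alpha$, where the Kedlaya--Mochizuki good formal structure theorem (after ramification and modification) yields exactly the normal form of Definition \ref{def-normal}; and the second claim is an induction on strata using Proposition \ref{lem-modi} to realize each piece as $\Sol^{\rmE}$ of a meromorphic connection and the compatibility of $\Sol^{\rmE}$ with proper direct images to push it down along $b_\alpha$.

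One point where your description of the ``main obstacle'' overstates the difficulty, and where a literal implementation of what you wrote would be painful: you do not need to glue the connections $\M_\alpha$ living on the various blow-ups into a single module by matching gluing data across stratum boundaries. Since $\Sol_X^{\rmE}$ is fully faithful and exact, its essential image is a triangulated subcategory of $\BEC_{\RR\mbox{\scriptsize -}c}(\I\CC_X)$, hence closed under cones. The induction therefore only has to show that each graded piece of the filtration of $K$ by the closed unions of strata --- that is, each $\pi^{-1}\CC_{X_\alpha}\otimes K \simeq \bfE b_{\alpha !!}\bigl(\pi^{-1}\CC_{\var{X}^{\blow}_\alpha\setminus D_\alpha}\otimes \bfE b_\alpha^{-1}K\bigr)$ --- lies in the essential image, which follows from Proposition \ref{lem-modi} together with $\Sol^{\rmE}(\bfD b_{\alpha\ast}\M_\alpha)\simeq \bfE b_{\alpha!!}\Sol^{\rmE}(\M_\alpha)$ for the projective map $b_\alpha$; the recollement triangles then force $K$ itself into the image with no compatibility check on overlaps. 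With that adjustment your outline is a faithful reconstruction of the argument of \cite{Ito20}.
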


\subsubsection{Algebraic Case}
Let $X$ be a smooth algebraic variety over $\CC$ and
denote by $X^\an$ the underlying complex analytic manifold of $X$.
Recall that an algebraic stratification of $X$ is
a Zariski locally finite partition $\{X_\alpha\}_{\alpha\in A}$ of $X$
by locally closed subvarieties $X_\alpha$
such that for any $\alpha\in A$, $X_\alpha$ is smooth and
$\var{X}_\alpha = \bigsqcup_{\beta\in B} X_{\beta}$ for a subset $B\subset A$.
Moreover an algebraic stratification $\{X_\alpha\}_{\alpha\in A}$ of $X$ induces
a complex analytic stratification $\{X^\an_\alpha\}_{\alpha\in A}$ of $X^\an$.

\begin{definition}[{\cite[Thm.\:3.1]{Ito21}}]
We say that an enhanced ind-sheaf $K\in\ZEC(\I\CC_{X^\an})$ satisfies the condition 
$\AC$
if there exists an algebraic stratification $\{X_\alpha\}_\alpha$ of $X$ such that
$$\pi^{-1}\CC_{(\var{X}^{\blow}_\alpha)^\an \setminus D^\an_\alpha}
\otimes \bfE (b^\an_\alpha)^{-1}K$$
has a modified quasi-normal form along $D^\an_\alpha$ for any $\alpha$,
where $b_\alpha \colon \var{X}^\blow_\alpha \to X$ is a blow-up of $\var{X_\alpha}$
along $\partial X_\alpha := \var{X_\alpha}\setminus X_\alpha$,
$D_\alpha := b_\alpha^{-1}(\partial X_\alpha)$
and $D_\alpha^\an := \big(\var{X}_\alpha^\blow\big)^\an\setminus
\big(\var{X}_\alpha^\blow\setminus D_\alpha\big)^\an$.
Namely $\var{X}^\blow_\alpha$ is a smooth algebraic variety over $\CC$,
$D_\alpha$ is a normal crossing divisor of $\var{X}^\blow_\alpha$
and $b_\alpha$ is a projective map
which induces an isomorphism $\var{X}^\blow_\alpha\setminus D_\alpha\simto X_\alpha$
and satisfies $b_\alpha\big(\var{X}^\blow_\alpha\big)=\var{X_\alpha}$.
\end{definition}

We denote by $\ZEC_{\CC\mbox{\scriptsize -}c}(\I\CC_X)$ the full subcategory of $\ZEC(\I\CC_{X^\an})$
whose objects satisfy the condition 
$\AC$.
Note that $\ZEC_{\CC\mbox{\scriptsize -}c}(\I\CC_X)$ is the full subcategory of
the abelian category $\ZEC_{\CC\mbox{\scriptsize -}c}(\I\CC_{X^\an})$
of $\CC$-constructible enhanced ind-sheaves on $X^\an$.
Moreover we set
\[\BEC_{\CC\mbox{\scriptsize -}c}(\I\CC_X) := \{K\in\BEC(\I\CC_{X^\an})\
|\ \SH^i(K)\in\ZEC_{\CC\mbox{\scriptsize -}c}(\I\CC_X) \mbox{ for any }i\in\ZZ \}
\subset \BEC_{\CC\mbox{\scriptsize -}c}(\I\CC_{X^\an}).\]

\begin{theorem}[{\cite[Thm.\:3.7]{Ito21}}]\label{thm-Ito21_complete}
Let $X$ be a smooth complete algebraic variety over $\CC$.
Then there exists an equivalence of triangulated categories
\[\Sol_X^{\rmE} \colon \BDChol(\D_X)^{\op}\simto \BEC_{\CC\mbox{\scriptsize -}c}(\I\CC_X),\
\M\mapsto \Sol_X^\rmE(\M) := \Sol_{X^\an}^\rmE(\M^\an).\]
\end{theorem}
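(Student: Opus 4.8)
The plan is to factor the functor as $\Sol_X^{\rmE} = \Sol_{X^\an}^{\rmE}\circ(\cdot)^\an$, where $(\cdot)^\an\colon \BDChol(\D_X)\to\BDChol(\D_{X^\an})$ is the analytification functor and $\Sol_{X^\an}^{\rmE}$ is the analytic enhanced solution functor, and then to prove that each factor is an equivalence onto the appropriate target. The analytic enhanced solution functor is already known to be an equivalence $\BDChol(\D_{X^\an})^{\op}\simto\BEC_{\CC\mbox{\scriptsize -}c}(\I\CC_{X^\an})$ by Theorem \ref{thm-Ito20}, so the crux reduces to a GAGA-type statement for holonomic $\D$-modules on the complete variety $X$, together with an identification of essential images inside $\BEC_{\CC\mbox{\scriptsize -}c}(\I\CC_{X^\an})$.

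First I would establish that, since $X$ is complete, the analytification functor induces an equivalence of triangulated categories $(\cdot)^\an\colon\BDChol(\D_X)\simto\BDChol(\D_{X^\an})$. The functor is already faithful and exact, so it remains to prove full faithfulness and essential surjectivity. I would argue by d\'evissage on the cohomological length of a complex and then on a stratification: on a Zariski-dense smooth open stratum a holonomic module restricts to a meromorphic connection along a divisor, where Fact \ref{fact_mero} supplies the equivalence $\BDCmero(\D_{X(D)})\simto\BDCmero(\D_{X^\an(D^\an)})$. Passing from the generic stratum to all of $X$ requires resolution of singularities (Hironaka) to reduce the divisor to normal crossings and to compare the algebraic and analytic direct images along the resulting projective modification; properness of $X$ is exactly what forces the algebraic and analytic pushforwards to agree.

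Once analytification is an equivalence, composing with Theorem \ref{thm-Ito20} shows immediately that $\Sol_X^{\rmE}$ is fully faithful with essential image contained in $\BEC_{\CC\mbox{\scriptsize -}c}(\I\CC_{X^\an})$, and it remains to pin the image down to the algebraic subcategory $\BEC_{\CC\mbox{\scriptsize -}c}(\I\CC_X)$. For one inclusion I would verify that for algebraic holonomic $\M$ the complex $\Sol_{X^\an}^{\rmE}(\M^\an)$ satisfies the condition $\AC$: the characteristic variety of $\M$ is algebraic, so the analytic stratification adapting $\Sol_{X^\an}^{\rmE}(\M^\an)$ produced by Theorem \ref{thm-Ito20} can be chosen algebraic, and the blow-ups and modifications entering Definition \ref{def-const} can likewise be taken algebraic. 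For the converse inclusion, if $K$ satisfies $\AC$ then $K$ is in particular analytically $\CC$-constructible, so Theorem \ref{thm-Ito20} yields an analytic holonomic $\N$ with $K\simeq\Sol_{X^\an}^{\rmE}(\N)$; essential surjectivity of analytification then produces an algebraic $\M$ with $\M^\an\simeq\N$, whence $K\simeq\Sol_X^{\rmE}(\M)$.

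I expect the main obstacle to lie in the essential-image matching, precisely in showing that the analytic stratification furnished by the analytic correspondence can be chosen algebraic for solutions of algebraic holonomic modules; this is where the algebraicity of the singular support and the compatibility of algebraic with analytic blow-ups must be handled with care. By contrast, the GAGA step, although technical, is in essence a d\'evissage reducing to Fact \ref{fact_mero}, and the full faithfulness of $\Sol_X^{\rmE}$ is then formal, being the composite of two fully faithful functors.
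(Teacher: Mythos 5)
This statement is quoted from \cite[Thm.\:3.7]{Ito21} and the present paper contains no proof of it, but your outline is correct and follows essentially the same route as the cited proof: reduce to the analytic equivalence of Theorem \ref{thm-Ito20} together with the GAGA statement Fact \ref{fact_mero} for meromorphic connections on a complete variety, using resolution of singularities, localization along the (algebraic) singular support and induction on the dimension of the support to pass from meromorphic connections to arbitrary holonomic modules, and an algebraicity argument for the adapted stratification and blow-ups to identify the essential image with the subcategory cut out by the condition $\AC$. The only organizational difference is that you package the dévissage as a standalone holonomic GAGA equivalence $(\cdot)^\an\colon\BDChol(\D_X)\simto\BDChol(\D_{X^\an})$ before matching essential images, whereas the cited argument interleaves the two steps; the underlying mathematics is the same.
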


Thanks to Hironaka's desingularization theorem \cite{Hiro64} (see also \cite[Thm\:4.3]{Naga62}),
we can take a smooth complete algebraic variety $\tl{X}$ such that $X\subset \tl{X}$
and $D := \tl{X}\setminus X$ is a normal crossing divisor of $\tl{X}$.
Let us consider a bordered space $$X^\an_\infty = (X^\an, \tl{X}^\an)$$
and the triangulated category $\BEC(\I\CC_{X^\an_\infty})$ of enhanced ind-sheaves
on $X^\an_\infty$.
Remark that $\BEC(\I\CC_{X^\an_\infty})$ does not depend on the choice of $\tl{X}$,
see \cite[\S 2.3]{Ito21} for the details.

We shall denote by $j\colon X\hookrightarrow \tl{X}$ the open embedding,
and by $j^\an\colon X^\an\hookrightarrow \tl{X}^\an$ the correspondence morphism for analytic spaces of $j$. 
Then we obtain the morphism of bordered spaces
$$j^\an\colon X^\an_\infty\to \tl{X}^\an$$
given by the embedding $j^\an\colon X^\an\hookrightarrow \tl{X}^\an$.

\begin{definition}[{\cite[Def.\:3.10]{Ito21}}]\label{def-algconst}
We say that an enhanced ind-sheaf $K\in\BEC(\I\CC_{X^\an_\infty})$ is
algebraic $\CC$-constructible on $X_\infty^\an$
if $\bfE j^\an_{!!}K \in\BEC(\I\CC_{\tl{X}^\an})$ is an object of $\BEC_{\CC\mbox{\scriptsize -}c}(\I\CC_{\tl{X}})$.
\end{definition}

We denote by $\BEC_{\CC\mbox{\scriptsize -}c}(\I\CC_{X_\infty})$
the full triangulated subcategory of $\BEC(\I\CC_{X^\an_\infty})$
consisting of algebraic $\CC$-constructible enhanced ind-sheaves on $X_\infty^\an$.
Note that the triangulated category $\BEC_{\CC\mbox{\scriptsize -}c}(\I\CC_{X_\infty})$ is the full triangulated subcategory
of $\BEC_{\RR\mbox{\scriptsize -}c}(\I\CC_{X^\an_\infty})$,
see \cite[Prop.\:3.21]{Ito20} for the details.

Let us set 
\[\Sol_{X_\infty}^\rmE(\M) := 
\bfE (j^\an)^{-1}\Sol_{\tl{X}}^\rmE(\bfD j_\ast\M)
\in\BEC(\I\CC_{X^\an_\infty})\]
for any $\M\in\BDC(\D_X)$.

\begin{theorem}[{\cite[Thm.\:3.11]{Ito21}}]\label{thm-Ito21}
For any $\M\in\BDChol(\D_X)$, the enhanced solution complex $\Sol_{X_\infty}^\rmE(\M)$
of $\M$ is an algebraic $\CC$-constructible enhanced ind-sheaf.

On the other hand, 
for any algebraic $\CC$-constructible enhanced ind-sheaf $K\in\BEC_{\CC\mbox{\scriptsize -}c}(\I\CC_{X_\infty})$,
there exists $\M\in\BDChol(\D_X)$
such that $$K\simto \Sol_{X_\infty}^{\rmE}(\M).$$
Moreover, we obtain an equivalence of triangulated categories
\[\Sol_{X_\infty}^{\rmE} \colon \BDChol(\D_X)^{\op}\simto \BEC_{\CC\mbox{\scriptsize -}c}(\I\CC_{X_\infty}).\]
\end{theorem}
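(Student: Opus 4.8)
The plan is to reduce everything to the complete case already settled in Theorem \ref{thm-Ito21_complete}, using the fixed compactification $j\colon X\hookrightarrow\tl{X}$ with $D=\tl{X}\setminus X$ a normal crossing divisor. Two elementary structural facts drive the argument. On the $\D$-module side, $\bfD j_\ast$ sends $\BDChol(\D_X)$ into $\BDChol(\D_{\tl{X}})$ and, because $\bfD j^\ast\bfD j_\ast\simeq\id$, it is fully faithful with essential image the complexes $\N$ satisfying $\N\simeq\N(\ast D)$. On the enhanced side, $j^\an\colon X_\infty^\an\to\tl{X}^\an$ is an open embedding of bordered spaces, so $\bfE(j^\an)^{-1}\bfE j^\an_{!!}\simeq\id$, $\bfE(j^\an)^{-1}\simeq\bfE(j^\an)^!$, and $\bfE j^\an_{!!}\bfE(j^\an)^{-1}\simeq\pi^{-1}\CC_{X^\an}\otimes(-)$.

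The technical heart I would establish first is the compatibility isomorphism
\[\bfE j^\an_{!!}\,\Sol_{X_\infty}^\rmE(\M)\simeq\Sol_{\tl{X}}^\rmE(\bfD j_\ast\M)\qquad(\M\in\BDChol(\D_X)).\]
Starting from the defining formula $\Sol_{X_\infty}^\rmE(\M)=\bfE(j^\an)^{-1}\Sol_{\tl{X}}^\rmE(\bfD j_\ast\M)$, applying $\bfE j^\an_{!!}$ produces $\pi^{-1}\CC_{X^\an}\otimes\Sol_{\tl{X}}^\rmE(\bfD j_\ast\M)$. Since $\bfD j_\ast\M\simeq(\bfD j_\ast\M)(\ast D)$, the localization property of the enhanced solution functor of \cite{DK16}, namely $\Sol_{\tl{X}}^\rmE(\N(\ast D))\simeq\pi^{-1}\CC_{\tl{X}\setminus D}\otimes\Sol_{\tl{X}}^\rmE(\N)$, shows that $\Sol_{\tl{X}}^\rmE(\bfD j_\ast\M)$ is invariant under $\pi^{-1}\CC_{X^\an}\otimes(-)$; this yields the claimed isomorphism.

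Granting the bridge, the first assertion is immediate: $\bfD j_\ast\M\in\BDChol(\D_{\tl{X}})$, so $\Sol_{\tl{X}}^\rmE(\bfD j_\ast\M)\in\BEC_{\CC\mbox{\scriptsize -}c}(\I\CC_{\tl{X}})$ by Theorem \ref{thm-Ito21_complete}, whence $\bfE j^\an_{!!}\Sol_{X_\infty}^\rmE(\M)\in\BEC_{\CC\mbox{\scriptsize -}c}(\I\CC_{\tl{X}})$, which is exactly the condition in Definition \ref{def-algconst}. For essential surjectivity, given $K\in\BEC_{\CC\mbox{\scriptsize -}c}(\I\CC_{X_\infty})$ I would use $\bfE j^\an_{!!}K\in\BEC_{\CC\mbox{\scriptsize -}c}(\I\CC_{\tl{X}})$ and Theorem \ref{thm-Ito21_complete} to produce $\N\in\BDChol(\D_{\tl{X}})$ with $\bfE j^\an_{!!}K\simeq\Sol_{\tl{X}}^\rmE(\N)$, and then set $\M:=\bfD j^\ast\N$. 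Since $\bfD j_\ast\M\simeq\N(\ast D)$, the localization formula together with $\bfE(j^\an)^{-1}\bfE j^\an_{!!}\simeq\id$ gives
\[\Sol_{X_\infty}^\rmE(\M)=\bfE(j^\an)^{-1}\Sol_{\tl{X}}^\rmE(\N(\ast D))\simeq\bfE(j^\an)^{-1}\bfE j^\an_{!!}K\simeq K.\]

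Finally, for full faithfulness I would combine the adjunction $\bfE j^\an_{!!}\dashv\bfE(j^\an)^!\simeq\bfE(j^\an)^{-1}$ with the bridge isomorphism to compute, for $\M,\M'\in\BDChol(\D_X)$,
\[\Hom\big(\Sol_{X_\infty}^\rmE(\M),\Sol_{X_\infty}^\rmE(\M')\big)\simeq\Hom\big(\Sol_{\tl{X}}^\rmE(\bfD j_\ast\M),\Sol_{\tl{X}}^\rmE(\bfD j_\ast\M')\big),\]
which by Theorem \ref{thm-Ito21_complete} equals $\Hom(\bfD j_\ast\M',\bfD j_\ast\M)$ and, by full faithfulness of $\bfD j_\ast$, equals $\Hom(\M',\M)=\Hom_{\BDChol(\D_X)^{\op}}(\M,\M')$; this gives the equivalence. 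The main obstacle is the bridge isomorphism itself: it rests on the localization behaviour of $\Sol^\rmE$ along $D$ and on the open-embedding identities for $\bfE j^\an_{!!}$, and it also requires the compatibility of the algebraic direct image $\bfD j_\ast$ with analytification so that $\Sol_{\tl{X}}^\rmE\circ\bfD j_\ast$ is well behaved. Everything else is formal once this is in place.
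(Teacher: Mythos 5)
The paper does not actually prove Theorem \ref{thm-Ito21} here --- it is quoted from \cite{Ito21}, where it is deduced from the complete case (Theorem \ref{thm-Ito21_complete}) by essentially the reduction you describe. Your argument is correct and matches that route: the bridge isomorphism $\bfE j^\an_{!!}\,\Sol_{X_\infty}^\rmE(\M)\simeq\Sol_{\tl{X}}^\rmE(\bfD j_\ast\M)$ follows from $\bfD j_\ast\M\simeq(\bfD j_\ast\M)(\ast D)$, the localization property of $\Sol^\rmE$ along $D$, and $\bfE j^\an_{!!}\bfE(j^\an)^{-1}\simeq\pi^{-1}\CC_{X^\an}\otimes(-)$, after which algebraic $\CC$-constructibility, essential surjectivity via $\M:=\bfD j^\ast\N$, and full faithfulness via the $(\bfE j^\an_{!!},\bfE(j^\an)^{!})$-adjunction all follow formally as you indicate.
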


\newpage
\section{Main Results}\label{sec3}
The main results of this paper are Theorems \ref{main1}, \ref{main2}, \ref{main3} and \ref{main4}.
\subsection{Analytic case}
In this subsection, let $X$ be a complex manifold.
First of all, we shall prove that the natural embedding functor $e_X\circ \iota_X$ and the sheafification functor $\sh_X$
preserve the $\CC$-constructibility.

Proposition \ref{prop3.1} (resp.\ Proposition \ref{prop3.2}) below was proved
in \cite[Cor.\:3.27]{Ito20} (resp.\ \cite[Cor.\:3.28]{Ito20})
by using Fact \ref{regRH_ana}.
In this paper, we will prove them without Fact \ref{regRH_ana}.

\begin{proposition}\label{prop3.1}
For any $\F\in\BDC_{\CC\mbox{\scriptsize -}c}(\CC_X)$,
we have $e_X(\iota_X(\SF))\in \BEC_{\CC\mbox{\scriptsize -}c}(\I\CC_X)$.
\end{proposition}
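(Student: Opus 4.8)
The plan is to reduce to the case of a single $\CC$-constructible sheaf and then, for a stratification on which $\SF$ is locally constant, to match $e_X(\iota_X(\SF))$ stratum-by-stratum with the enhanced solution complex of a \emph{regular meromorphic connection} produced by Deligne's theorem. Concretely, I would first reduce to the case where $\SF$ is a $\CC$-constructible sheaf concentrated in a single degree. Since $\BEC_{\CC\mbox{\scriptsize -}c}(\I\CC_X)$ is a full triangulated subcategory of $\BEC(\I\CC_X)$ and $e_X\circ\iota_X$ is an exact functor of triangulated categories, the truncation triangles expressing $\SF$ through its shifted cohomology sheaves $\SH^i(\SF)[-i]$ reduce the statement to proving $e_X(\iota_X(\SF))\in\ZEC_{\CC\mbox{\scriptsize -}c}(\I\CC_X)$ for a single $\CC$-constructible sheaf $\SF$. (Equivalently one may use that $e_X\circ\iota_X$ is t-exact for the standard t-structures.)

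Next I would choose a complex analytic stratification $\{X_\alpha\}_\alpha$ of $X$ satisfying the frontier condition of Definition \ref{def-const} on each stratum of which $\SF$ is a local system, and claim it is adapted to $K:=e_X(\iota_X(\SF))$. Fix $\alpha$, let $b_\alpha\colon\var{X}^\blow_\alpha\to X$ be the blow-up with $D_\alpha=b_\alpha^{-1}(\partial X_\alpha)$ a normal crossing divisor, and write $j\colon\var{X}^\blow_\alpha\setminus D_\alpha\hookrightarrow\var{X}^\blow_\alpha$. Using the compatibility $\bfE b_\alpha^{-1}(e_X(\iota_X(\SF)))\simeq e_{\var{X}^\blow_\alpha}(\iota(b_\alpha^{-1}\SF))$ from \S\ref{subsec2.3} together with the elementary identity $\pi^{-1}\CC_V\otimes e(\iota(G))\simeq e(\iota(\CC_V\otimes G))$, the object to be tested becomes
\[\pi^{-1}\CC_{\var{X}^\blow_\alpha\setminus D_\alpha}\otimes\bfE b_\alpha^{-1}K\simeq e_{\var{X}^\blow_\alpha}\bigl(\iota(j_!L_\alpha)\bigr),\]
where $L_\alpha$ is the local system on $\var{X}^\blow_\alpha\setminus D_\alpha$ obtained by pulling back $\SF|_{X_\alpha}$ along the isomorphism $\var{X}^\blow_\alpha\setminus D_\alpha\simto X_\alpha$. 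Everything thus reduces to showing that $e(\iota(j_!L_\alpha))$ has a modified quasi-normal form along the normal crossing divisor $D_\alpha$.

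For this I would invoke Deligne's theorem (Fact \ref{RH_Deligne}): the local system $L_\alpha$ equals $\SS(\M_\alpha)$ for a regular meromorphic connection $\M_\alpha\in\Conn^\reg(\var{X}^\blow_\alpha;D_\alpha)$. The holomorphic solution complex of a regular meromorphic connection is the extension by zero of its flat local system, $\Sol(\M_\alpha)\simeq j_!L_\alpha$ (a local computation near $D_\alpha$, as one already sees for $\sho(\ast D)$), so by Fact \ref{fact_e} (valid for regular holonomic modules, independently of Fact \ref{regRH_ana}),
\[\Sol^\rmE_{\var{X}^\blow_\alpha}(\M_\alpha)\simeq e\bigl(\Sol(\M_\alpha)\bigr)\simeq e\bigl(\iota(j_!L_\alpha)\bigr).\]
Since $\M_\alpha\in\Conn(\var{X}^\blow_\alpha;D_\alpha)$, Proposition \ref{lem-modi} shows that $\Sol^\rmE_{\var{X}^\blow_\alpha}(\M_\alpha)$ has a modified quasi-normal form along $D_\alpha$, hence so does $e(\iota(j_!L_\alpha))$. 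This verifies the defining condition for every $\alpha$, so the chosen stratification is adapted and $K=e_X(\iota_X(\SF))$ is $\CC$-constructible; combined with the first reduction this proves the proposition.

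The main obstacle is the identification $\Sol(\M_\alpha)\simeq j_!L_\alpha$, i.e.\ that the holomorphic solution complex of a regular meromorphic connection is the zero-extension of its flat local system; this rests on the local moderate-growth analysis near the normal crossing divisor, not on the regular Riemann--Hilbert correspondence. Keeping this classical input together with Deligne's theorem, Fact \ref{fact_e}, and Proposition \ref{lem-modi} (all established without Fact \ref{regRH_ana}) is precisely what makes the argument non-circular.
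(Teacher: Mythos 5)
Your proposal is correct and follows essentially the same route as the paper: reduce to a single $\CC$-constructible sheaf, pull back along the blow-ups $b_\alpha$, use Deligne's equivalence (Fact \ref{RH_Deligne}) to realize the local system as $\Sol_{\var{X}^\blow_\alpha}(\M_\alpha)|_{\var{X}^\blow_\alpha\setminus D_\alpha}$ for a regular meromorphic connection, and conclude via Fact \ref{fact_e} and Proposition \ref{lem-modi}. The only cosmetic difference is at your ``main obstacle'': the paper obtains $\CC_{\var{X}^\blow_\alpha\setminus D_\alpha}\otimes\Sol_{\var{X}^\blow_\alpha}(\M_\alpha)\simeq\Sol_{\var{X}^\blow_\alpha}(\M_\alpha)$ (your $\Sol(\M_\alpha)\simeq j_!L_\alpha$) directly from $\M_\alpha\simeq\M_\alpha(\ast D_\alpha)$, with no moderate-growth input needed.
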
 

\begin{proof}
By induction on the length of complex, it is enough to show
in the case when $\F$ is a $\CC$-constructible sheaf (not complex).

Let $\F$ be a $\CC$-constructible sheaf.
Then there exists a complex analytic stratification $\{X_\alpha\}_{\alpha\in A}$ of $X$
such that $\F|_{X_\alpha}$ is a local system. 
We shall prove that $K := e_X(\iota_X(\SF))$ is a $\CC$-constructible enhanced ind-sheaf.
For each $\alpha\in A$, let $b_\alpha\colon \var{X}_\alpha^\blow\to X$ be a complex blow-up of $\var{X}_\alpha$
along $\partial X_\alpha := \var{X}_\alpha\setminus X_\alpha$
and set $D_\alpha := b_\alpha^{-1}(\partial X_\alpha)$,
as in the condition (iii) of the definition of the $\CC$-constructibility (Definition \ref{def-const}).
Then we have isomorphisms
\begin{align*}
\pi^{-1}\CC_{\var{X}_\alpha^\blow\setminus D_\alpha}\otimes \bfE b_\alpha^{-1}K
&\simeq
e_{\var{X}_\alpha^\blow}\left(\iota_{\var{X}_\alpha^\blow}
\left(\CC_{\var{X}_\alpha^\blow\setminus D_\alpha}\otimes 
b_\alpha^{-1}(\F)\right)\right)\\
&\simeq
e_{\var{X}_\alpha^\blow}\left(\iota_{\var{X}_\alpha^\blow}
\left(i_{\var{X}_\alpha^\blow\setminus D_\alpha!}
(b_\alpha|_{\var{X}_\alpha^\blow\setminus D_\alpha})^{-1}(\F|_{X\alpha})\right)\right),
\end{align*}
by the commutativity of $e_X$ and $\iota_X$ for various operations, 
where $i_{\var{X}_\alpha^\blow\setminus D_\alpha}\colon \var{X}_\alpha^\blow\setminus D_\alpha\to \var{X}^\blow_\alpha$ is the natural embedding.
Since $(b_\alpha|_{\var{X}_\alpha^\blow\setminus D_\alpha})^{-1}(\F|_{X\alpha})$
is a local system on $\var{X}^\blow_\alpha\setminus D_\alpha$,
there exists an object $\M_\alpha\in\Conn^\reg(\var{X}^\blow_\alpha; D_\alpha)$
such that
$$(b_\alpha|_{\var{X}_\alpha^\blow\setminus D_\alpha})^{-1}(\F|_{X\alpha})\simeq
\Sol_{\var{X}^\blow_\alpha}(\M_\alpha)|_{\var{X}^\blow_\alpha\setminus D_\alpha}$$
by Fact \ref{RH_Deligne}.
Hence,
there exist isomorphisms
\begin{align*}
\pi^{-1}\CC_{\var{X}_\alpha^\blow\setminus D_\alpha}\otimes \bfE b_\alpha^{-1}K
&\simeq
e_{\var{X}_\alpha^\blow}\left(\iota_{\var{X}_\alpha^\blow}
\left(i_{\var{X}_\alpha^\blow\setminus D_\alpha!}\Sol_{\var{X}^\blow_\alpha}(\M_\alpha)
|_{\var{X}^\blow_\alpha\setminus D_\alpha}\right)\right)\\
&\simeq
e_{\var{X}_\alpha^\blow}\left(\iota_{\var{X}_\alpha^\blow}
\left(\CC_{\var{X}_\alpha^\blow\setminus D_\alpha}\otimes \Sol_{\var{X}^\blow_\alpha}(\M_\alpha)\right)\right)\\
&\simeq
e_{\var{X}_\alpha^\blow}\left(\iota_{\var{X}_\alpha^\blow}
\left(\Sol_{\var{X}^\blow_\alpha}(\M_\alpha)\right)\right)
\simeq
\Sol^\rmE_{\var{X}^\blow_\alpha}(\M_\alpha),
\end{align*}
where the third isomorphism follows from $\M_\alpha\simeq \M_\alpha(\ast D_\alpha)$
and the last isomorphism follows from Fact \ref{fact_e}.
Since $\M_\alpha\in \Conn(\var{X}^\blow_\alpha; D_\alpha)$,
the enhanced ind-sheaf $\Sol^\rmE_{\var{X}^\blow_\alpha}(\M)$
has a modified quasi-normal form along $D_\alpha$
by Proposition \ref{lem-modi}.

Therefore, the enhanced ind-sheaf
$$\pi^{-1}\CC_{\var{X}_\alpha^\blow\setminus D_\alpha}\otimes \bfE b_\alpha^{-1}K
\in\ZEC(\I\CC_{\var{X}_\alpha^\blow})$$
has a modified quasi-normal form along $D_\alpha$
for each $\alpha\in A$,
and hence the enhanced ind-sheaf $K = e_X(\iota_X(\SF))$ is $\CC$-constructible.
\end{proof}

\begin{proposition}\label{prop3.2}
For any $K\in \BEC_{\CC\mbox{\scriptsize -}c}(\I\CC_X)$,
we have $\sh_X(K)\in\BDC_{\CC\mbox{\scriptsize -}c}(\CC_X)$.
\end{proposition}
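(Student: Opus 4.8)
My plan is to bypass the stratumwise computation used for Proposition~\ref{prop3.1} and instead argue globally, exploiting the compatibility of the sheafification functor $\sh_X$ with the enhanced solution functor. The key observation is that all the ingredients I need are already at hand and, crucially, none of them is Fact~\ref{regRH_ana}: the irregular Riemann--Hilbert correspondence of Theorem~\ref{thm-Ito20} realizes any $\CC$-constructible enhanced ind-sheaf as an enhanced solution complex of a holonomic module; Fact~\ref{fact_sh} converts its sheafification into an ordinary solution complex; and the classical constructibility theorem for holonomic systems (which predates and is independent of the regular Riemann--Hilbert equivalence) supplies the conclusion.

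Concretely, let $K\in\BEC_{\CC\mbox{\scriptsize -}c}(\I\CC_X)$. By the essential-surjectivity half of Theorem~\ref{thm-Ito20} I may choose $\M\in\BDChol(\D_X)$ with $K\simeq \Sol_X^\rmE(\M)$. Since holonomic modules are coherent, Fact~\ref{fact_sh} applies and yields
\[
\sh_X(K)\simeq \sh_X\bigl(\Sol_X^\rmE(\M)\bigr)\simeq \Sol_X(\M).
\]
It then remains only to recall that the solution complex $\Sol_X(\M)$ of a holonomic $\D_X$-module is $\CC$-constructible; this is Kashiwara's classical constructibility theorem (see, e.g., \cite{Bjo93} or \cite{KS90}). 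Hence $\sh_X(K)\in\BDC_{\CC\mbox{\scriptsize -}c}(\CC_X)$, which is exactly the assertion.

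The only point that genuinely requires care is the circularity bookkeeping: one must verify that the external input invoked is merely the \emph{constructibility} of $\Sol_X(\M)$ for holonomic $\M$, and not the full equivalence of Fact~\ref{regRH_ana}. The constructibility statement is proved through the characteristic variety and the involutivity/Lagrangian property of holonomic systems, so it is logically prior to the regular Riemann--Hilbert correspondence and introduces no circularity; likewise Theorem~\ref{thm-Ito20} and Fact~\ref{fact_sh} are independent of Fact~\ref{regRH_ana}.

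For completeness I would note why I do not attempt the more self-contained route that would mirror Proposition~\ref{prop3.1}. There one would fix a complex analytic stratification $\{X_\alpha\}$ adapted to $K$, identify $\pi^{-1}\CC_{\var{X}_\alpha^\blow\setminus D_\alpha}\otimes\bfE b_\alpha^{-1}K$ with $\Sol_{\var{X}_\alpha^\blow}^\rmE(\M_\alpha)$ for a meromorphic connection $\M_\alpha$ via Proposition~\ref{lem-modi}, apply Fact~\ref{fact_sh} on each $\var{X}_\alpha^\blow$ to obtain the local system $\Sol_{\var{X}_\alpha^\blow}(\M_\alpha)|_{\var{X}_\alpha^\blow\setminus D_\alpha}$, and then try to descend this stratumwise information back to $\sh_X(K)$. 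The obstruction—and the reason I prefer the global argument—is that $\sh_X$ is built from the right adjoint $\bfR_X^\rmE$ together with $i_0^!$, so it does not commute with restriction to the locally closed strata $X_\alpha$ nor with the blow-down $b_\alpha$ in general; transporting the local computations back to $X$ is precisely the delicate step that the passage through a single global $\M$ avoids.
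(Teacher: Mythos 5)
Your argument is correct and non-circular with respect to Fact~\ref{regRH_ana}, but it is genuinely different from the paper's proof, and the difference has a cost elsewhere in the paper. The paper proves Proposition~\ref{prop3.2} by a direct stratumwise computation: it fixes a stratification adapted to $K$, uses the compatibility of the sheafification functor with $\bfE b_\alpha^{-1}$ and with restriction to $\var{X}_\alpha^\blow\setminus D_\alpha$ (the point you flag as delicate is handled by \cite[Lem.\:3.3 (1)]{DK21}), and then concludes from condition (ii) of Definition~\ref{def-modi} together with $\sh_{M_\infty}(\CC_{M_\infty}^\rmE)\simeq\CC_M$ that $\sh_X(K)|_{X_\alpha}$ is a local system; no $\D$-module input beyond the definition of $\CC$-constructibility is needed. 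You instead invert $K$ through Theorem~\ref{thm-Ito20}, apply Fact~\ref{fact_sh}, and quote Kashiwara's classical constructibility theorem for $\Sol_X(\M)$, $\M$ holonomic. That external input is indeed logically independent of Fact~\ref{regRH_ana}, so Theorem~\ref{main2} is unaffected; however, it is precisely the statement of Theorem~\ref{main1}, which the paper advertises as receiving ``another proof'' \emph{via} Proposition~\ref{prop3.2} (its proof is your argument with the final citation removed). Under your route the derivation of Theorem~\ref{main1} from Proposition~\ref{prop3.2} becomes circular, and that secondary result reverts to being an imported classical fact rather than a consequence of the enhanced-ind-sheaf machinery. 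So: your proof is acceptable if one is willing to give up the independent reproof of Kashiwara's constructibility theorem; the paper's stratumwise argument is what buys that extra result.
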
 

\begin{proof}
By induction on the length of complex, it is enough to show in the case of $K\in\ZEC_{\CC\mbox{\scriptsize -}c}(\I\CC_X)$.

Let $K\in\ZEC_{\CC\mbox{\scriptsize -}c}(\I\CC_X)$
and $\{X_\alpha\}_{\alpha\in A}$ a complex analytic stratification adapted to $K$.
We shall prove that $\sh_X(K)|_{X_\alpha}$ is a local system for each $\alpha\in A$.
For each $\alpha\in A$, let $b_\alpha\colon \var{X}_\alpha^\blow\to X$ be a complex blow-up of $\var{X}_\alpha$
along $\partial X_\alpha := \var{X}_\alpha\setminus X_\alpha$
and set $D_\alpha := b_\alpha^{-1}(\partial X_\alpha)$,
as in the condition (iii) of the definition of the $\CC$-constructibility (Definition \ref{def-const}).
Since $b_\alpha|_{\var{X}_\alpha^\blow\setminus D_\alpha}\colon
\var{X}^{\blow} _\alpha\setminus D_\alpha\simto X_\alpha$
is an isomorphism,
it is enough to show that $(b_\alpha|_{\var{X}_\alpha^\blow\setminus D_\alpha})^{-1}
(\sh_X(K)|_{X_\alpha})$ is a local system on $\var{X}_\alpha^\blow\setminus D_\alpha$.
However, this follows from 
$$(b_\alpha|_{\var{X}_\alpha^\blow\setminus D_\alpha})^{-1}
(\sh_X(K)|_{X_\alpha})\simeq
\sh_{\var{X}_\alpha^\blow\setminus D_\alpha}
\left((\bfE b_\alpha^{-1}K)|_{\var{X}_\alpha^\blow\setminus D_\alpha}\right)$$
by \cite[Lem.\:3.3 (1)]{DK21},
the condition (ii) in Definition \ref{def-modi} (see also Definition \ref{def-normal})
and the fact that there exists an isomorphism
$\sh_{M_\infty}(\CC_{M_\infty}^\rmE) \simeq \CC_{M}$
for any bordered space $M_\infty = (M, \che{M})$.
\end{proof}

The following theorem can be proved as a corollary of \cite[Thm.\:4.8]{Kas78} (see also \cite[Thm.\:3.5]{Kas75}).
In this paper, we will give an another proof by using Proposition \ref{prop3.2}.
\begin{theorem}\label{main1}
For any $\M\in\BDChol(\D_X)$, we have $\Sol_X(\M)\in \BDC_{\CC\mbox{\scriptsize -}c}(\CC_X)$.
\end{theorem}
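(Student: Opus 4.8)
The plan is to observe that the classical solution complex $\Sol_X(\M)$ is nothing but the sheafification of the enhanced solution complex $\Sol_X^\rmE(\M)$, and then to feed this into the two $\CC$-constructibility results already available, namely Theorem \ref{thm-Ito20} on the enhanced side and Proposition \ref{prop3.2} for the sheafification functor. Thus the whole argument reduces to chaining together three earlier statements, without any appeal to Fact \ref{regRH_ana}.

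Concretely, I would proceed as follows. Since $\M\in\BDChol(\D_X)$ is in particular an object of $\BDCcoh(\D_X)$, Fact \ref{fact_sh} supplies an isomorphism $\sh_X\big(\Sol_X^\rmE(\M)\big)\simeq \Sol_X(\M)$, which reduces the claim to showing $\sh_X\big(\Sol_X^\rmE(\M)\big)\in\BDC_{\CC\mbox{\scriptsize -}c}(\CC_X)$. Next, by the first assertion of Theorem \ref{thm-Ito20} the enhanced solution complex of a holonomic $\D_X$-module is $\CC$-constructible, so $\Sol_X^\rmE(\M)\in\BEC_{\CC\mbox{\scriptsize -}c}(\I\CC_X)$. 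Finally, Proposition \ref{prop3.2} asserts exactly that $\sh_X$ carries $\CC$-constructible enhanced ind-sheaves into $\BDC_{\CC\mbox{\scriptsize -}c}(\CC_X)$; applying it to $K=\Sol_X^\rmE(\M)$ and combining with the isomorphism above yields $\Sol_X(\M)\in\BDC_{\CC\mbox{\scriptsize -}c}(\CC_X)$.

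I expect no genuine obstacle here, since all the substantive work has been front-loaded into Theorem \ref{thm-Ito20} (which is \cite[Thm.\:3.26]{Ito20}) and into Proposition \ref{prop3.2}, whose proof in turn rests on the structure theory of $\CC$-constructible enhanced ind-sheaves through modified quasi-normal forms. The only point needing a moment's care is the bookkeeping of hypotheses on the functors: Fact \ref{fact_sh} is stated for \emph{coherent} $\D_X$-modules, so holonomicity enters only to guarantee the $\CC$-constructibility of $\Sol_X^\rmE(\M)$, whereas the comparison $\sh_X\circ\Sol_X^\rmE\simeq\Sol_X$ itself requires merely coherence. Because this comparison holds independently of the regular Riemann--Hilbert correspondence, the resulting proof of Theorem \ref{main1} is free of Fact \ref{regRH_ana}, in keeping with the non-circularity claimed in the introduction.
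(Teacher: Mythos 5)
Your proposal is correct and follows exactly the paper's own proof: apply Theorem \ref{thm-Ito20} to get $\Sol_X^\rmE(\M)\in\BEC_{\CC\mbox{\scriptsize -}c}(\I\CC_X)$, use the isomorphism $\Sol_X(\M)\simeq\sh_X(\Sol_X^\rmE(\M))$ from Fact \ref{fact_sh}, and conclude via Proposition \ref{prop3.2}. Your remark on the hypotheses (coherence suffices for Fact \ref{fact_sh}, holonomicity is needed only for the $\CC$-constructibility) is accurate and consistent with the paper.
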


\begin{proof}
Let $\M\in\BDChol(\D_X)$.
Then we have $\Sol_X^\rmE(\M) \in \BEC_{\CC\mbox{\scriptsize -}c}(\I\CC_X)$ by Theorem \ref{thm-Ito20}.
By Fact \ref{fact_sh}, we have an isomorphism
$$\Sol_X(\M)\simeq \sh_X(\Sol_X^\rmE(\M))$$
and hence 
we obtain $\Sol_X(\M)\in \BDC_{\CC\mbox{\scriptsize -}c}(\CC_X)$
by Proposition \ref{prop3.2}.
\end{proof}

The following lemma is a key lemma of this paper.
\begin{lemma}\label{keylem}
Let $\M\in\BDChol(\D_X)$.
An enhanced ind-sheaf
$\Sol_X^\rmE(\M)$ is of sheaf type if and only if $\M\in\BDCrh(\D_X)$.
\end{lemma}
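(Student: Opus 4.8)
The plan is to prove both implications, with the "if" direction being essentially a restatement of earlier facts and the "only if" direction requiring the real work.

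For the easy direction, suppose $\M\in\BDCrh(\D_X)$. Then Fact \ref{fact_e} gives directly an isomorphism $\Sol_X^\rmE(\M)\simeq e_X(\Sol_X(\M))$. Since $\Sol_X(\M)\in\BDC(\CC_X)$ (and indeed it is $\CC$-constructible by Theorem \ref{main1}, though we only need that it is an honest sheaf-theoretic object), this exhibits $\Sol_X^\rmE(\M)$ as $e_X(\iota_X(\F))$ for $\F = \Sol_X(\M)$, which is precisely the condition in Definition \ref{def-sheaftype} that $\Sol_X^\rmE(\M)$ be of sheaf type. So this direction is immediate from Fact \ref{fact_e}.

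For the converse, suppose $\Sol_X^\rmE(\M)$ is of sheaf type, so $\Sol_X^\rmE(\M)\simeq e_X(\iota_X(\F))$ for some $\F\in\BDC(\CC_X)$. The plan is to recover $\Sol_X(\M)$ and then to compare $\M$ with its regularization $\M_\reg$. First I would apply the sheafification functor $\sh_X$: by Fact \ref{fact_sh} we have $\sh_X(\Sol_X^\rmE(\M))\simeq\Sol_X(\M)$, while by Fact \ref{fact_she} applied to $\F$ we get $\sh_X(e_X(\iota_X(\F)))\simeq\F$; hence $\F\simeq\Sol_X(\M)$, and consequently $\Sol_X^\rmE(\M)\simeq e_X(\iota_X(\Sol_X(\M)))$. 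Now recall from the end of Subsection \ref{subsec2.4.1} that the regularization $\M_\reg$ is regular holonomic and satisfies $\Sol_X(\M_\reg)\simeq\Sol_X(\M)$. By the already-proved "if" direction (equivalently, by Fact \ref{fact_e} applied to $\M_\reg$), we have $\Sol_X^\rmE(\M_\reg)\simeq e_X(\Sol_X(\M_\reg))\simeq e_X(\iota_X(\Sol_X(\M)))\simeq\Sol_X^\rmE(\M)$.

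The conclusion then follows from the full faithfulness of the enhanced solution functor, namely Fact \ref{irregRH_DK} (the embedding $\Sol_X^\rmE\colon\BDChol(\D_X)^{\op}\hookrightarrow\BEC_{\RR\mbox{\scriptsize -}c}(\I\CC_X)$): since $\Sol_X^\rmE(\M)\simeq\Sol_X^\rmE(\M_\reg)$ and both $\M,\M_\reg\in\BDChol(\D_X)$, full faithfulness forces $\M\simeq\M_\reg$ in $\BDChol(\D_X)$. As $\M_\reg$ is regular holonomic, we conclude $\M\in\BDCrh(\D_X)$. The main obstacle is ensuring that the regularization construction $\M\mapsto\M_\reg$ and the isomorphism $\Sol_X(\M_\reg)\simeq\Sol_X(\M)$ extend from single holonomic modules to objects of the derived category $\BDChol(\D_X)$; I would handle this by the same dévissage on the length of the complex used throughout the paper, reducing to the case where $\M$ is concentrated in a single degree, where the facts from Subsection \ref{subsec2.4.1} apply verbatim. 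The injectivity of $\Sol_X^\rmE$ on objects, supplied by Fact \ref{irregRH_DK}, is what makes the argument genuinely non-circular: it does not presuppose Fact \ref{regRH_ana}.
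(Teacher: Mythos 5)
Your proposal is correct and follows essentially the same route as the paper: the "if" direction from Fact \ref{fact_e}, and for the converse, recovering $\F\simeq\Sol_X(\M)$ via Facts \ref{fact_she} and \ref{fact_sh}, passing to the regularization $\M_\reg$ with $\Sol_X(\M_\reg)\simeq\Sol_X(\M)$, and concluding $\M\simeq\M_\reg$ from the full faithfulness in Fact \ref{irregRH_DK}. Your extra remark about extending $\M\mapsto\M_\reg$ to complexes by d\'evissage is a point the paper leaves implicit, but it is handled the same way.
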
 

\begin{proof}
By Fact \ref{fact_e}, 
an enhanced ind-sheaf $\Sol_X^\rmE(\M)$ is of sheaf type if $\M\in\BDCrh(\D_X)$.

We assume that 
$\Sol_X^\rmE(\M)$ is of sheaf type.
By definition (see Definition \ref{def-sheaftype}), there exists $\F\in \BDC(\CC_X)$ such that 
$$\Sol_X^\rmE(\M) \simeq e_X(\iota_X(\F)).$$
By Fact \ref{fact_she} and Fact \ref{fact_sh}, 
we have $\Sol_X(\M)\simeq \F$.
Remark that there exists an isomorphism $\Sol_X(\M)\simeq \Sol_X(\M_\reg)$,
see the end of \S \ref{subsec2.4.1}.
Hence, we have isomorphisms
$$\Sol_X^\rmE(\M) \simeq e_X(\iota_X(\F))
\simeq e_X(\iota_X(\Sol_X(\M_\reg)))
\simeq \Sol_X^\rmE(\M_\reg),$$
where the last isomorphism follows from Fact \ref{fact_e}.
Therefore we have $\M\simeq \M_\reg$ by Fact \ref{irregRH_DK}
and hence $\M\in \BDCrh(\D_X)$.
\end{proof}

Let us reprove Fact \ref{regRH_ana}
(the Riemann--Hilbert correspondence for regular holonomic $\D$-modules of \cite{Kas84}).
\begin{theorem}\label{main2}
There exists an equivalence of triangulated categories
$$\Sol_X\colon  \BDCrh(\D_X)^{\op}\simto\BDC_{\CC\mbox{\scriptsize -}c}(\CC_X).$$
\end{theorem}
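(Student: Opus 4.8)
The plan is to establish the equivalence by combining the irregular Riemann--Hilbert correspondence (Theorem \ref{thm-Ito20}) with the sheafification functor and the characterization of regular holonomicity proved in Lemma \ref{keylem}. The strategy is to show that $\Sol_X$ is the composition of two equivalences: the enhanced solution functor $\Sol_X^\rmE$ restricted to $\BDCrh(\D_X)$, followed by sheafification $\sh_X$. Concretely, I would first define a candidate target category and then verify full faithfulness and essential surjectivity by transporting the known equivalence of Theorem \ref{thm-Ito20} through the sheaf-type criterion.

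First I would restrict the equivalence $\Sol_X^\rmE\colon \BDChol(\D_X)^{\op}\simto \BEC_{\CC\mbox{\scriptsize -}c}(\I\CC_X)$ of Theorem \ref{thm-Ito20} to the full subcategory $\BDCrh(\D_X)$. By Lemma \ref{keylem}, an object $\M\in\BDChol(\D_X)$ lies in $\BDCrh(\D_X)$ precisely when $\Sol_X^\rmE(\M)$ is of sheaf type; therefore $\Sol_X^\rmE$ induces an equivalence between $\BDCrh(\D_X)^{\op}$ and the full subcategory of $\BEC_{\CC\mbox{\scriptsize -}c}(\I\CC_X)$ consisting of objects of sheaf type. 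Next I would identify this subcategory with $\BDC_{\CC\mbox{\scriptsize -}c}(\CC_X)$ via the functors $e_X\circ\iota_X$ and $\sh_X$. By Fact \ref{fact_she} the composite $\sh_X\circ e_X\circ\iota_X$ is isomorphic to the identity on $\BDC(\CC_X)$, and by Definition \ref{def-sheaftype} every sheaf-type object is isomorphic to $e_X(\iota_X(\F))$ for some $\F$; Propositions \ref{prop3.1} and \ref{prop3.2} guarantee that $e_X\circ\iota_X$ and $\sh_X$ restrict to mutually quasi-inverse equivalences between $\BDC_{\CC\mbox{\scriptsize -}c}(\CC_X)$ and the sheaf-type objects of $\BEC_{\CC\mbox{\scriptsize -}c}(\I\CC_X)$.

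Finally I would verify that the composite $\sh_X\circ\Sol_X^\rmE$ agrees with the classical solution functor $\Sol_X$ on $\BDCrh(\D_X)$; this is exactly the content of Fact \ref{fact_sh}, which gives $\sh_X(\Sol_X^\rmE(\M))\simeq \Sol_X(\M)$ for any coherent $\M$. Chaining the two equivalences then yields that
\[\Sol_X\colon \BDCrh(\D_X)^{\op}\to \BDC_{\CC\mbox{\scriptsize -}c}(\CC_X)\]
is itself an equivalence, with essential surjectivity following because any $\F\in\BDC_{\CC\mbox{\scriptsize -}c}(\CC_X)$ produces a sheaf-type object $e_X(\iota_X(\F))\in\BEC_{\CC\mbox{\scriptsize -}c}(\I\CC_X)$, hence an $\M\in\BDCrh(\D_X)$ with $\Sol_X^\rmE(\M)\simeq e_X(\iota_X(\F))$, so that $\Sol_X(\M)\simeq\F$ by Fact \ref{fact_she}.

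The main obstacle, and the conceptual heart of the argument, is Lemma \ref{keylem}: the assertion that sheaf-type of $\Sol_X^\rmE(\M)$ detects regular holonomicity. Everything else is bookkeeping that transports an already-established equivalence through formal adjunction and sheafification identities. Since Lemma \ref{keylem} is available to me as a prior result, the remaining work is to check that the full faithfulness on the sheaf-type subcategory is genuinely inherited from Theorem \ref{thm-Ito20} and is not disturbed by passing through $\sh_X$ — but this is immediate once one observes that $e_X\circ\iota_X$ and $\sh_X$ are inverse equivalences on the relevant subcategories, so no independent computation of morphism spaces is required.
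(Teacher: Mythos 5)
Your overall architecture matches the paper's: both arguments rest on Theorem \ref{thm-Ito20}, Lemma \ref{keylem}, Facts \ref{fact_e}, \ref{fact_she}, \ref{fact_sh}, and Propositions \ref{prop3.1} and \ref{prop3.2}, and your essential-surjectivity argument is exactly the paper's. The one genuine gap is your final claim that full faithfulness requires ``no independent computation of morphism spaces'' because $e_X\circ\iota_X$ and $\sh_X$ are ``inverse equivalences on the relevant subcategories.'' What you have actually established at that point is only: (a) $\sh_X\circ e_X\circ\iota_X\simeq\id$ on $\BDC_{\CC\mbox{\scriptsize -}c}(\CC_X)$ (Fact \ref{fact_she}); (b) both functors preserve constructibility (Propositions \ref{prop3.1}, \ref{prop3.2}); and (c) $e_X\circ\iota_X$ hits every sheaf-type object up to isomorphism (Definition \ref{def-sheaftype}). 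This data does not imply that $e_X\circ\iota_X$ is \emph{full}: a functor admitting a left quasi-inverse up to isomorphism and surjective on isomorphism classes of the target can still fail to be surjective on $\Hom$-sets (consider the inclusion of a non-full subcategory with the same objects, retracted by a functor that collapses the extra morphisms). Concretely, for $\F,\G\in\BDC_{\CC\mbox{\scriptsize -}c}(\CC_X)$ you need the map $\Hom_{\BDC(\CC_X)}(\F,\G)\to\Hom_{\BEC(\I\CC_X)}\left(e_X(\iota_X(\F)), e_X(\iota_X(\G))\right)$ to be bijective, and (a)--(c) only give injectivity.

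The missing input is precisely the full faithfulness of $e_X\circ\iota_X$, which the paper supplies by citing \cite[Prop.\:4.7.15]{DK16} and \cite[Prop.\:3.3.4]{KS01} (full faithfulness of $e_X$ and of $\iota_X$ separately). Once you add that input, your identification of $\BDC_{\CC\mbox{\scriptsize -}c}(\CC_X)$ with the sheaf-type objects of $\BEC_{\CC\mbox{\scriptsize -}c}(\I\CC_X)$ becomes correct, and the rest of your argument --- restricting $\Sol_X^\rmE$ along Lemma \ref{keylem} and identifying $\sh_X\circ\Sol_X^\rmE$ with $\Sol_X$ via Fact \ref{fact_sh} --- goes through and reproduces the paper's proof in a slightly repackaged form.
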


\begin{proof}
By Theorem \ref{main1},
it is enough to show that the functor
$\Sol_X\colon  \BDCrh(\D_X)^{\op}\to\BDC_{\CC\mbox{\scriptsize -}c}(\CC_X)$
is fully faithful and essentially surjective.

Let $\M, \N\in \BDCrh(\D_X)$.
Then we have isomorphisms
\begin{align*}
\Hom_{\BDC_{\CC\mbox{\scriptsize -}c}(\CC_X)}\left(\Sol_X(\N), \Sol_X(\M)\right)
&\simeq
\Hom_{\BEC_{\CC\mbox{\scriptsize -}c}(\I\CC_X)}\left(e_X(\iota_X(\Sol_X(\N))), e_X(\iota_X(\Sol_X(\M)))\right)\\
&\simeq
\Hom_{\BEC_{\CC\mbox{\scriptsize -}c}(\I\CC_X)}\left(\Sol_X^\rmE(\N), \Sol_X^\rmE(\M)\right)\\
&\simeq
\Hom_{\BDCrh(\D_X)}\left(\M, \N\right),
\end{align*}
where the first isomorphism follows from
the fact that the functor
$e_X\circ \iota_X\colon \BDC_{\CC\mbox{\scriptsize -}c}(\CC_X)\to \BEC_{\CC\mbox{\scriptsize -}c}(\I\CC_X)$
is fully faithful by \cite[Prop.\:4.7.15]{DK16}, \cite[Prop.\:3.3.4]{KS01} and Proposition \ref{prop3.1},
the second isomorphism follows from Fact \ref{fact_e}
and the last isomorphism follows from Fact \ref{irregRH_DK}.
Hence, the functor $\Sol_X$ is fully faithful.

Let $\F\in \BDC_{\CC\mbox{\scriptsize -}c}(\CC_X)$.
By Proposition \ref{prop3.1}, we have $e_X(\iota_X(\F)) \in \BEC_{\CC\mbox{\scriptsize -}c}(\I\CC_X)$
and hence there exists $\M\in\BDChol(\D_X)$ such that $e_X(\iota_X(\F)) \simeq \Sol_X^\rmE(\M)$
by Theorem \ref{thm-Ito20}.
Since $\M\in \BDCrh(\D_X)$ by Lemma \ref{keylem},
we obtain an isomorphism
$$e_X(\iota_X(\F)) \simeq e_X(\iota_X(\Sol_X(\M)))$$
by Fact \ref{fact_e}
and hence we have $\F\simeq \Sol_X(\M)$
by applying the sheafification functor $\sh_X$ and using Fact \ref{fact_she}.
This means that the functor $\Sol_X$ is essentially surjective.

Therefore, there exists an equivalence of triangulated categories
$$\Sol_X\colon  \BDCrh(\D_X)^{\op}\simto\BDC_{\CC\mbox{\scriptsize -}c}(\CC_X).$$
\end{proof}

\subsection{Algebraic case}
In this subsection, let $X$ be a smooth algebraic variety over $\CC$.
First of this subsection, we shall prove that the natural embedding functor $e_{X^\an_\infty}\circ \iota_{X^\an_\infty}$
and the sheafification functor $\sh_{X^\an_\infty}$ preserve the algebraic $\CC$-constructibility.

Proposition \ref{prop3.7} (resp.\ Proposition \ref{prop3.8}) below was proved
in \cite[Prop.\:3.14]{Ito21} (resp.\ \cite[Prop.\:3.16]{Ito21})
by using Fact \ref{regRH_alg}.
In this paper, we will prove them without Fact \ref{regRH_alg}.

The following lemma can be prove by using Fact \ref{fact_mero} and 
the same arguments of Propositions \ref{prop3.1}, \ref{prop3.2}.
We shall skip the proof of this lemma.
\begin{lemma}\label{lem3.6}
If $X$ is complete then we have:
\begin{itemize}
\item[\rm (1)]
For any $\F\in\BDC_{\CC\mbox{\scriptsize -}c}(\CC_X)$,
we have $e_{X^\an}(\iota_{X^\an}(\SF))\in \BEC_{\CC\mbox{\scriptsize -}c}(\I\CC_X)$.

\item[\rm (2)]
For any $K\in \BEC_{\CC\mbox{\scriptsize -}c}(\I\CC_X)$,
we have $\sh_{X^\an}(K)\in\BDC_{\CC\mbox{\scriptsize -}c}(\CC_X)$.
\end{itemize}
\end{lemma}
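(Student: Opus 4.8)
The plan is to transcribe the proofs of Propositions \ref{prop3.1} and \ref{prop3.2} to the analytified data on $X^\an$, the one genuinely new ingredient being Fact \ref{fact_mero}. Since $X$ is complete, every blow-up occurring below is a smooth complete algebraic variety, so an analytic regular meromorphic connection furnished by Deligne's theorem descends to an algebraic one; this descent is what keeps the argument inside the algebraic framework demanded by condition $\AC$.

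For (1), I would first reduce by induction on the length of the complex to the case where $\F$ is a single algebraic $\CC$-constructible sheaf, and pick an algebraic stratification $\{X_\alpha\}_{\alpha\in A}$ of $X$ on whose strata $\F|_{X_\alpha^\an}$ is a local system. For each $\alpha$ take the algebraic blow-up $b_\alpha\colon\var{X}_\alpha^\blow\to X$ of $\var{X_\alpha}$ along $\partial X_\alpha$ and set $D_\alpha:=b_\alpha^{-1}(\partial X_\alpha)$; completeness of $X$ makes $\var{X}_\alpha^\blow$ complete. Just as in Proposition \ref{prop3.1}, the commutativity of $e$ and $\iota$ with the relevant operations identifies
\[
\pi^{-1}\CC_{(\var{X}_\alpha^\blow)^\an\setminus D_\alpha^\an}\otimes\bfE(b_\alpha^\an)^{-1}\bigl(e_{X^\an}(\iota_{X^\an}(\F))\bigr)
\]
with $e_{(\var{X}_\alpha^\blow)^\an}\circ\iota_{(\var{X}_\alpha^\blow)^\an}$ applied to the extension by zero of a local system $\SL_\alpha$ on $(\var{X}_\alpha^\blow)^\an\setminus D_\alpha^\an$. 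By Deligne's theorem (Fact \ref{RH_Deligne}) there is $\N_\alpha\in\Conn^\reg\bigl((\var{X}_\alpha^\blow)^\an; D_\alpha^\an\bigr)$ with $\SL_\alpha\simeq\Sol_{(\var{X}_\alpha^\blow)^\an}(\N_\alpha)|_{(\var{X}_\alpha^\blow)^\an\setminus D_\alpha^\an}$, and by Fact \ref{fact_mero} we may write $\N_\alpha\simeq\M_\alpha^\an$ for an algebraic $\M_\alpha\in\Conn(\var{X}_\alpha^\blow; D_\alpha)$. Using $\N_\alpha\simeq\N_\alpha(\ast D_\alpha^\an)$ and Fact \ref{fact_e} as in Proposition \ref{prop3.1}, the displayed object becomes $\Sol_{\var{X}_\alpha^\blow}^\rmE(\M_\alpha)$, which has a modified quasi-normal form along $D_\alpha^\an$ by Proposition \ref{lem-modi}. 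As $\{X_\alpha\}$ is algebraic, this verifies condition $\AC$, whence $e_{X^\an}(\iota_{X^\an}(\F))\in\BEC_{\CC\mbox{\scriptsize -}c}(\I\CC_X)$.

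For (2), I would again reduce by induction to $K\in\ZEC_{\CC\mbox{\scriptsize -}c}(\I\CC_X)$, fix an algebraic stratification $\{X_\alpha\}$ realizing $\AC$ for $K$, and show that $\sh_{X^\an}(K)|_{X_\alpha^\an}$ is a local system for each $\alpha$. As in Proposition \ref{prop3.2}, since $b_\alpha^\an$ restricts to an isomorphism $(\var{X}_\alpha^\blow)^\an\setminus D_\alpha^\an\simto X_\alpha^\an$, it suffices to identify the inverse image of $\sh_{X^\an}(K)|_{X_\alpha^\an}$ with $\sh_{(\var{X}_\alpha^\blow)^\an\setminus D_\alpha^\an}\bigl((\bfE(b_\alpha^\an)^{-1}K)|_{(\var{X}_\alpha^\blow)^\an\setminus D_\alpha^\an}\bigr)$; the latter is a local system by the condition (ii) of Definition \ref{def-normal} built into the modified quasi-normal form and by $\sh_{M_\infty}(\CC_{M_\infty}^\rmE)\simeq\CC_M$. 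As $\{X_\alpha\}$ is algebraic, $\sh_{X^\an}(K)$ is then algebraic $\CC$-constructible.

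I expect the only non-formal point to be this descent in (1): Deligne's theorem realizes $\SL_\alpha$ by a connection only analytically, and promoting it to an algebraic $\M_\alpha$ is exactly where completeness and Fact \ref{fact_mero} are indispensable. Everything else is a verbatim transcription of Propositions \ref{prop3.1} and \ref{prop3.2} to the analytified situation, which is why the routine verifications may be omitted, as the statement already indicates.
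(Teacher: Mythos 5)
Your proposal is correct and follows exactly the route the paper intends: the paper explicitly skips this proof, stating only that it follows from Fact \ref{fact_mero} together with the same arguments as Propositions \ref{prop3.1} and \ref{prop3.2}, and your write-up fills in precisely that outline, correctly identifying the descent of Deligne's analytic regular meromorphic connection to an algebraic one via Fact \ref{fact_mero} (using completeness of the blow-ups) as the only genuinely new ingredient in part (1), and the algebraicity of the adapted stratification as the only extra observation needed in part (2).
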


\newpage
Again, let $X$ be a smooth algebraic variety (not necessarily complete) over $\CC$.
\begin{proposition}\label{prop3.7}
For any $\F\in\BDC_{\CC\mbox{\scriptsize -}c}(\CC_X)$,
we have $e_{X^\an_\infty}(\iota_{X^\an_\infty}(\SF))\in \BEC_{\CC\mbox{\scriptsize -}c}(\I\CC_{X_\infty})$.
\end{proposition}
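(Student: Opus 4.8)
The plan is to reduce the assertion, via the very definition of algebraic $\CC$-constructibility (Definition \ref{def-algconst}), to the already-settled complete case (Lemma \ref{lem3.6}). By Definition \ref{def-algconst}, what must be shown is that
$$\bfE j^\an_{!!}\big(e_{X^\an_\infty}(\iota_{X^\an_\infty}(\SF))\big)\in\BEC_{\CC\mbox{\scriptsize -}c}(\I\CC_{\tl X}).$$
First I would rewrite the left-hand side using the compatibility of $e_{(\cdot)}$ with push-forward recorded in the excerpt, namely $\bfE f_{!!}(e_{M_\infty}F)\simeq e_{N_\infty}(\bfR f_{!!}F)$, applied to the morphism $f=j^\an\colon X^\an_\infty\to\tl X^\an$ and to $F=\iota_{X^\an_\infty}(\SF)$. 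This yields
$$\bfE j^\an_{!!}\big(e_{X^\an_\infty}(\iota_{X^\an_\infty}(\SF))\big)\simeq e_{\tl X^\an}\big(\bfR j^\an_{!!}\,\iota_{X^\an_\infty}(\SF)\big).$$

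Next I would identify $\bfR j^\an_{!!}\,\iota_{X^\an_\infty}(\SF)$ with an honest extension by zero. Since the embedding functor $\iota$ commutes with $\bfR(\cdot)_{!!}$, and the bordered push-forward $\bfR j^\an_{!!}$ along the open inclusion $X^\an\hookrightarrow\tl X^\an$ (being the analogue of $\rmR(\cdot)_!$) is computed by the exact extension-by-zero functor, one obtains
$$\bfR j^\an_{!!}\,\iota_{X^\an_\infty}(\SF)\simeq \iota_{\tl X^\an}(j_!\SF),$$
where $j_!\SF\in\BDC(\CC_{\tl X^\an})$ denotes the extension by zero of $\SF$ to $\tl X^\an$. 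The key point is that $j_!\SF$ is algebraic $\CC$-constructible on the \emph{complete} variety $\tl X$: an algebraic stratification of $X$ with respect to which $\SF$ is constructible, together with any algebraic stratification of the normal crossing divisor $D=\tl X\setminus X$, yields an algebraic stratification of $\tl X$ on each stratum of which $j_!\SF$ restricts to a local system (the zero local system over the strata contained in $D$). Hence $j_!\SF\in\BDC_{\CC\mbox{\scriptsize -}c}(\CC_{\tl X})$, and since $\tl X$ is smooth and complete, Lemma \ref{lem3.6} (1) applies and gives $e_{\tl X^\an}(\iota_{\tl X^\an}(j_!\SF))\in\BEC_{\CC\mbox{\scriptsize -}c}(\I\CC_{\tl X})$. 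Combining this with the two displayed isomorphisms completes the argument.

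The step I expect to be the main obstacle is the careful justification of the middle isomorphism $\bfR j^\an_{!!}\,\iota_{X^\an_\infty}(\SF)\simeq\iota_{\tl X^\an}(j_!\SF)$, which forces one to keep track of the bordered-space conventions: one must verify that the $!!$-push-forward along $j^\an$ really produces extension by zero rather than the $\ast$-push-forward, and that it is compatible with the embedding $\iota$. Everything else is formal, namely the reduction through Definition \ref{def-algconst}, the application of the $e$–$\bfR(\cdot)_{!!}$ compatibility, and the stratification argument showing $j_!\SF$ is algebraic $\CC$-constructible; the passage from a single sheaf to an arbitrary complex may, if one prefers, be organized by induction on the length of the complex exactly as in Propositions \ref{prop3.1} and \ref{prop3.2}, although it is already handled by the triangulatedness of all functors involved.
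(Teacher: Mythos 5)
Your proposal is correct and follows essentially the same route as the paper: reduce via Definition \ref{def-algconst}, identify $\bfE j^\an_{!!}\big(e_{X^\an_\infty}(\iota_{X^\an_\infty}(\SF))\big)$ with $e_{\tl X^\an}\big(\iota_{\tl X^\an}(\bfR j^\an_!\SF)\big)$ (the paper does this in one step, invoking the semi-properness of $j^\an$ together with \cite[Prop.\:2.18 (i)]{KS16-2} and \cite[Rem.\:2.4.3]{DK19}), note that $\bfR j^\an_!\SF$ is algebraic $\CC$-constructible on the complete variety $\tl X$ (the paper cites \cite[Thm.\:4.5.8 (iii)]{HTT08} where you give the stratification argument directly), and conclude by Lemma \ref{lem3.6} (1).
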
 

\begin{proof}
Let $\F\in\BDC_{\CC\mbox{\scriptsize -}c}(\CC_X)$ and
we set $K := e_{X^\an_\infty}(\iota_{X^\an_\infty}(\SF))\in \BEC(\I\CC_{X^\an_\infty})$.
It is enough to show that $\bfE j^\an_{!!}K\in \BEC_{\CC\mbox{\scriptsize -}c}(\I\CC_{\tl{X}})$
by Definition \ref{def-algconst}.
Since $j^\an\colon X^\an_\infty\to \tl{X}^\an$ is semi-proper
(see \cite[Def.\:2.3.5]{DK19} (also  \cite[Def.\:2.4]{KS16-2}) for the definition of semi-proper),
there exists an isomorphism
$$\bfE j^\an_{!!}K\simeq e_{\tl{X}^\an}(\iota_{\tl{X}^\an}(\bfR j^\an_!(\SF)))$$
by  \cite[Prop.\:2.18 (i)]{KS16-2} and \cite[Rem.\:2.4.3]{DK19}.
Since $\bfR j^\an_!(\SF)\in \BDC_{\CC\mbox{\scriptsize -}c}(\CC_{\tl{X}})$ by \cite[Thm.\:4.5.8 (iii)]{HTT08},
we have $$e_{\tl{X}^\an}(\iota_{\tl{X}^\an}(\bfR j^\an_!(\SF)))
\in \BEC_{\CC\mbox{\scriptsize -}c}(\I\CC_{\tl{X}})$$
by Lemma \ref{lem3.6} (1).
Therefore, we have $\bfE j^\an_{!!}K\in \BEC_{\CC\mbox{\scriptsize -}c}(\I\CC_{\tl{X}})$
and hence $K = e_{X^\an_\infty}(\iota_{X^\an_\infty}(\SF))\in \BEC_{\CC\mbox{\scriptsize -}c}(\I\CC_{X_\infty})$.
\end{proof}

\begin{proposition}\label{prop3.8}
For any $K\in \BEC_{\CC\mbox{\scriptsize -}c}(\I\CC_{X_\infty})$,
we have $\sh_{X^\an_\infty}(K)\in\BDC_{\CC\mbox{\scriptsize -}c}(\CC_X)$.
\end{proposition}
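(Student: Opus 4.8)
The plan is to reduce the algebraic statement (Proposition \ref{prop3.8}) to its complete counterpart, Lemma \ref{lem3.6} (2), by pushing everything forward along the open embedding $j^\an\colon X^\an_\infty\to\tl{X}^\an$, exactly mirroring the strategy used for Proposition \ref{prop3.7}. The key difficulty is that the sheafification functor $\sh_{X^\an_\infty}$ lives on the bordered space $X^\an_\infty=(X^\an,\tl{X}^\an)$, whereas the algebraic $\CC$-constructibility of $K$ is controlled by $\bfE j^\an_{!!}K$ on $\tl{X}^\an$. So the heart of the matter is to relate $\sh_{X^\an_\infty}(K)$ to $\sh_{\tl{X}^\an}(\bfE j^\an_{!!}K)$.

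First I would take $K\in\BEC_{\CC\mbox{\scriptsize -}c}(\I\CC_{X_\infty})$, so that by Definition \ref{def-algconst} we have $\bfE j^\an_{!!}K\in\BEC_{\CC\mbox{\scriptsize -}c}(\I\CC_{\tl{X}})$. Since $\tl{X}$ is a smooth \emph{complete} algebraic variety, Lemma \ref{lem3.6} (2) applies and gives
\[
\sh_{\tl{X}^\an}(\bfE j^\an_{!!}K)\in\BDC_{\CC\mbox{\scriptsize -}c}(\CC_{\tl{X}}).
\]
The remaining task is then purely a compatibility statement about sheafification and the open embedding. I expect an isomorphism of the form
\[
\sh_{X^\an_\infty}(K)\simeq (j^\an)^{-1}\sh_{\tl{X}^\an}(\bfE j^\an_{!!}K),
\]
or at least a relation letting me recover $\sh_{X^\an_\infty}(K)$ as the restriction of an honest $\CC$-constructible sheaf on $\tl{X}^\an$ to the open subset $X^\an$. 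Unwinding the definition $\sh_{M_\infty}=\alpha_{M_\infty}\circ i_0^!\circ\bfR_{M_\infty}^{\rmE}$, together with the base-change behaviour of the Grothendieck operations under the open morphism $j^\an$ (and the fact that $j^\an$ is an open embedding, so $(j^\an)^{-1}=(j^\an)^{!}$ up to shift on the relevant sheaves), this compatibility should follow from the standard commutation of $\sh$ with inverse images recorded in \cite[Lem.\:3.3]{DK21}, analogous to its use in the proof of Proposition \ref{prop3.2}.

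The main obstacle I anticipate is bookkeeping the bordered-space structure carefully: $\sh_{X^\an_\infty}(K)$ is a priori only an object of $\BDC(\CC_{X^\an})$, and I must check it is $\CC$-constructible in the \emph{algebraic} sense, i.e.\ that it is the restriction to $X^\an$ of an algebraic $\CC$-constructible sheaf on $\tl{X}^\an$. Granting the compatibility isomorphism above, this is immediate: the restriction of a $\CC$-constructible sheaf on $\tl{X}^\an$ along the open embedding $j^\an$ is again $\CC$-constructible, so $\sh_{X^\an_\infty}(K)\in\BDC_{\CC\mbox{\scriptsize -}c}(\CC_X)$ as desired. The subtle point to verify is that the sheafification on the bordered space $X^\an_\infty$ genuinely factors through sheafification on the ambient $\tl{X}^\an$ via $\bfE j^\an_{!!}$; once this is established the proof reduces to invoking Lemma \ref{lem3.6} (2) and the stability of $\CC$-constructibility under open restriction.
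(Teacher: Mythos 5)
Your proposal is correct and follows essentially the same route as the paper: reduce to the complete case via $\bfE j^\an_{!!}$, apply Lemma \ref{lem3.6} (2) on $\tl{X}$, and transport back through the compatibility isomorphism $\sh_{X^\an_\infty}(K)\simeq (j^\an)^{-1}\sh_{\tl{X}^\an}(\bfE j^\an_{!!}K)$, which the paper likewise obtains from the definition of the sheafification functor, concluding with stability of algebraic $\CC$-constructibility under open restriction (the paper cites \cite[Thm.\:4.5.8 (ii)]{HTT08} for this last step).
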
 

\begin{proof}
Let $K\in \BEC_{\CC\mbox{\scriptsize -}c}(\I\CC_{X_\infty})$.
Recall that there exists an isomorphism
$$\sh_{X^\an_\infty}(K)
\simeq 
(j^{-1})^\an(\sh_{\tl{X}^\an}(\bfE j^\an_{!!}K))
$$
by the definition of the sheafification functor.
By the definition of the triangulated category $\BEC_{\CC\mbox{\scriptsize -}c}(\I\CC_{X_\infty})$
(see Definition \ref{def-algconst}),
we have $\bfE j^\an_{!!}K\in \BEC_{\CC\mbox{\scriptsize -}c}(\I\CC_{\tl{X}})$,
and hence $$\sh_{\tl{X}^\an}(\bfE j^\an_{!!}K)\in\BDC_{\CC\mbox{\scriptsize -}c}(\CC_{\tl{X}})$$
by Lemma \ref{lem3.6} (2).
Moreover by \cite[Thm.\:4.5.8 (ii)]{HTT08}, 
we have $$(j^{-1})^\an(\sh_{\tl{X}^\an}(\bfE j^\an_{!!}K))\in\BDC_{\CC\mbox{\scriptsize -}c}(\CC_{X}).$$
Therefore we have $\sh_{X^\an_\infty}(K) \in \BDC_{\CC\mbox{\scriptsize -}c}(\CC_{X})$.
\end{proof}

The following theorem was proved in \cite[Main Theorem C (a)]{Be}.
In this paper, we will give an another proof by using Proposition \ref{prop3.8}.
\begin{theorem}\label{main3}
For any $\M\in\BDChol(\D_X)$, we have $\Sol_X(\M)\in \BDC_{\CC\mbox{\scriptsize -}c}(\CC_X)$.
\end{theorem}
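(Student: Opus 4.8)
The plan is to mirror the proof of Theorem \ref{main1} verbatim, replacing the complex manifold by the bordered space $X^\an_\infty$, Theorem \ref{thm-Ito20} by Theorem \ref{thm-Ito21}, Proposition \ref{prop3.2} by Proposition \ref{prop3.8}, and Fact \ref{fact_sh} by its bordered-space analogue. First I would invoke Theorem \ref{thm-Ito21}: for $\M\in\BDChol(\D_X)$ the enhanced solution complex $\Sol_{X_\infty}^\rmE(\M)$ lies in $\BEC_{\CC\mbox{\scriptsize -}c}(\I\CC_{X_\infty})$. Applying the sheafification functor and Proposition \ref{prop3.8} then yields $\sh_{X^\an_\infty}\big(\Sol_{X_\infty}^\rmE(\M)\big)\in\BDC_{\CC\mbox{\scriptsize -}c}(\CC_X)$. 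Hence it remains only to identify this sheafification with $\Sol_X(\M)$.

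The key step, and the main obstacle, is to prove the bordered-space analogue of Fact \ref{fact_sh}, namely the isomorphism $\Sol_X(\M)\simeq \sh_{X^\an_\infty}\big(\Sol_{X_\infty}^\rmE(\M)\big)$. I would establish it by unwinding the definition $\Sol_{X_\infty}^\rmE(\M)=\bfE (j^\an)^{-1}\Sol_{\tl{X}}^\rmE(\bfD j_\ast\M)$ and using that sheafification commutes with restriction along the open bordered morphism $j^\an\colon X^\an_\infty\to\tl{X}^\an$ (the commutation of $\sh$ with pullback from \cite[Lem.\:3.3 (1)]{DK21}, which is the same lemma powering Propositions \ref{prop3.2} and \ref{prop3.8}). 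This gives
\[
\sh_{X^\an_\infty}\big(\Sol_{X_\infty}^\rmE(\M)\big)\simeq (j^{-1})^\an\,\sh_{\tl{X}^\an}\big(\Sol_{\tl{X}}^\rmE(\bfD j_\ast\M)\big).
\]
Since $\tl{X}$ is complete and $\bfD j_\ast\M\in\BDChol(\D_{\tl{X}})$, Fact \ref{fact_sh} applied on the complex manifold $\tl{X}^\an$ (recalling $\Sol_{\tl{X}}^\rmE=\Sol_{\tl{X}^\an}^\rmE\circ(\cdot)^\an$) identifies $\sh_{\tl{X}^\an}\big(\Sol_{\tl{X}}^\rmE(\bfD j_\ast\M)\big)\simeq \Sol_{\tl{X}}(\bfD j_\ast\M)$.

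Finally, I would invoke the compatibility of the solution functor with the pullback of $\D$-modules along the open immersion $j$, together with the standard identity $\bfD j^\ast\bfD j_\ast\M\simeq\M$ for an open embedding, to obtain $(j^{-1})^\an\,\Sol_{\tl{X}}(\bfD j_\ast\M)\simeq \Sol_X(\bfD j^\ast\bfD j_\ast\M)\simeq \Sol_X(\M)$. Chaining these isomorphisms produces $\Sol_X(\M)\simeq \sh_{X^\an_\infty}\big(\Sol_{X_\infty}^\rmE(\M)\big)$, and combining this with the first paragraph gives $\Sol_X(\M)\in\BDC_{\CC\mbox{\scriptsize -}c}(\CC_X)$. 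I expect the delicate part to be the careful bookkeeping on the bordered space, i.e.\ justifying the interchange of $\sh_{X^\an_\infty}$ with $\bfE (j^\an)^{-1}$ (equivalently controlling $\bfE j^\an_{!!}\,\bfE (j^\an)^{-1}$ appearing in the definition of $\sh_{X^\an_\infty}$), and the verification of the $\D$-module compatibility $(j^{-1})^\an\circ \Sol_{\tl{X}}\simeq \Sol_X\circ \bfD j^\ast$; everything else is a formal transcription of the argument for Theorem \ref{main1}.
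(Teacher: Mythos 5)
Your proposal is correct and follows the same route as the paper: invoke Theorem \ref{thm-Ito21} to get $\Sol_{X_\infty}^\rmE(\M)\in\BEC_{\CC\mbox{\scriptsize -}c}(\I\CC_{X_\infty})$, identify $\Sol_X(\M)$ with $\sh_{X^\an_\infty}(\Sol_{X_\infty}^\rmE(\M))$, and conclude by Proposition \ref{prop3.8}. The only difference is that the paper simply cites \cite[Lem.\:3.15]{Ito21} for the middle isomorphism, whereas you reprove it by unwinding the definition of $\Sol_{X_\infty}^\rmE$ and reducing to Fact \ref{fact_sh} on the completion $\tl{X}$ --- a sound, if more laborious, way to supply the same ingredient.
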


\begin{proof}
Let $\M\in\BDChol(\D_X)$.
Then we have $\Sol_{X_\infty}^\rmE(\M) \in \BEC_{\CC\mbox{\scriptsize -}c}(\I\CC_{X_\infty})$
by Theorem \ref{thm-Ito21}.
By \cite[Lem.\:3.15]{Ito21}, we have an isomorphism
$$\Sol_X(\M)\simeq \sh_{X^\an_\infty}(\Sol_{X_\infty}^\rmE(\M))$$
and hence we obtain $\Sol_X(\M)\in \BDC_{\CC\mbox{\scriptsize -}c}(\CC_X)$
by Proposition \ref{prop3.8}.
\end{proof}

\begin{lemma}\label{lem3.10}
Let $\M\in\BDChol(\D_X)$.
An enhanced ind-sheaf
$\Sol_{X_\infty}^\rmE(\M)$ is of sheaf type if and only if $\M\in\BDCrh(\D_X)$.
\end{lemma}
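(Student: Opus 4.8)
The plan is to mirror the proof of the key lemma (Lemma \ref{keylem}) by transporting it to the smooth completion $\tl{X}$ and invoking the analytic statement there. Throughout set
\[\N := (\bfD j_\ast\M)^\an\in\BDChol(\D_{\tl{X}^\an}),\qquad L := \Sol_{\tl{X}}^\rmE(\bfD j_\ast\M)=\Sol_{\tl{X}^\an}^\rmE(\N),\]
so that, by the very definition of $\Sol_{X_\infty}^\rmE$, one has $\Sol_{X_\infty}^\rmE(\M)=\bfE(j^\an)^{-1}L$. I would then establish the chain of equivalences
\[\M\in\BDCrh(\D_X)\ \Longleftrightarrow\ \N\in\BDCrh(\D_{\tl{X}^\an})\ \Longleftrightarrow\ L\ \text{is of sheaf type}\ \Longleftrightarrow\ \Sol_{X_\infty}^\rmE(\M)\ \text{is of sheaf type},\]
whose middle equivalence is exactly Lemma \ref{keylem} applied on the compact complex manifold $\tl{X}^\an$ to the holonomic module $\N$. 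Reading the chain from left to right and right to left gives both implications of the statement at once.

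For the first equivalence I would argue that $\M$ is regular holonomic if and only if $\bfD j_\ast\M$ is: the algebraic direct image $\bfD j_\ast$ preserves regular holonomicity, and the isomorphism $\bfD j^\ast\bfD j_\ast\M\simeq\M$ together with the fact that $\bfD j^\ast$ preserves regular holonomicity yields the converse. Since $\tl{X}$ is complete there is no contribution beyond $\tl{X}$, so regular holonomicity of $\bfD j_\ast\M$ is the purely local analytic condition on $\tl{X}^\an$; hence $\bfD j_\ast\M\in\BDCrh(\D_{\tl{X}})$ if and only if $\N=(\bfD j_\ast\M)^\an\in\BDCrh(\D_{\tl{X}^\an})$. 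This step uses only standard comparison properties of regularity and is independent of Fact \ref{regRH_alg}. The second equivalence is then Lemma \ref{keylem} verbatim.

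The third equivalence is the crux. The implication ``$L$ of sheaf type $\Rightarrow\Sol_{X_\infty}^\rmE(\M)=\bfE(j^\an)^{-1}L$ of sheaf type'' is immediate from the commutation $\bfE(j^\an)^{-1}\circ e_{\tl{X}^\an}\simeq e_{X^\an_\infty}\circ(j^\an)^{-1}$ (and the analogous behaviour of $\iota$). The converse is the main obstacle: a priori, $\bfE(j^\an)^{-1}$ forgets the behaviour of $L$ along $D^\an=\tl{X}^\an\setminus X^\an$, so sheaf type on the bordered interior need not propagate to $\tl{X}^\an$. The point is that $\N$, being a $\bfD j_\ast$-pushforward, is localized along $D$: one has $\bfD j_\ast\M\simeq(\bfD j_\ast\M)(\ast D)$, hence $\N\simeq\N(\ast D^\an)$, and therefore $L$ satisfies condition (i) of Definition \ref{def-normal}, namely $\pi^{-1}\CC_{\tl{X}^\an\setminus D^\an}\otimes L\simto L$. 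For such $L$ the projection formula for the open embedding $j^\an$ gives $\bfE j^\an_{!!}\bfE(j^\an)^{-1}L\simeq\pi^{-1}\CC_{\tl{X}^\an\setminus D^\an}\otimes L\simeq L$, so $L$ is reconstructed from its restriction $\Sol_{X_\infty}^\rmE(\M)$. Consequently, if $\Sol_{X_\infty}^\rmE(\M)\simeq e_{X^\an_\infty}(\iota_{X^\an_\infty}(\G))$ is of sheaf type, then—using $\bfE j^\an_{!!}\circ e_{X^\an_\infty}\simeq e_{\tl{X}^\an}\circ\bfR j^\an_{!!}$ and the semi-properness of $j^\an$ exactly as in Proposition \ref{prop3.7}—I obtain $L\simeq e_{\tl{X}^\an}(\iota_{\tl{X}^\an}(\bfR j^\an_!\G))$, which is of sheaf type.

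Chaining the three equivalences proves the lemma. The only delicate ingredient is the reconstruction step in the last equivalence, and it succeeds precisely because $\N$ is meromorphic along $D$: this localization is what prevents $\bfE(j^\an)^{-1}$ from losing the information of $L$ along $D^\an$, and it is the algebraic analogue of the role played by the regularization $\M_\reg$ in the proof of Lemma \ref{keylem}.
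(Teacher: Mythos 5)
Your proposal is correct and follows essentially the same route as the paper: reduce to the completion $\tl{X}$ by applying $\bfE j^\an_{!!}$, invoke the analytic key lemma (Lemma \ref{keylem}) for $(\bfD j_\ast\M)^\an$ on $\tl{X}^\an$, and transfer regularity back to $\M$ via the algebraic--analytic comparison of \cite[Thm.\:6.1.12]{HTT08}. The only difference is that you spell out the reconstruction $\bfE j^\an_{!!}\bfE(j^\an)^{-1}L\simeq L$ coming from the localization $\bfD j_\ast\M\simeq(\bfD j_\ast\M)(\ast D)$, a step the paper uses implicitly when it applies $\bfE j^\an_{!!}$ to both sides of the sheaf-type isomorphism.
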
 

\begin{proof}
By \cite[Last part of Prop.\:3.14]{Ito21}\footnote{
Remark that the assertion of \cite[Last part of Prop.\:3.14]{Ito21} (where we omit iota in the diagram)
was proved without Fact \ref{regRH_alg}.}, 
an enhanced ind-sheaf $\Sol_{X_\infty}^\rmE(\M)$ is of sheaf type if $\M\in\BDCrh(\D_X)$.

We assume that 
$\Sol_{X_\infty}^\rmE(\M)$ is of sheaf type.
By definition (see Definition \ref{def-sheaftype}),
there exists $\F\in \BDC(\CC_X)$ such that 
$$\Sol_{X_\infty}^\rmE(\M) \simeq e_{X^\an_\infty}(\iota_{X^\an_\infty}(\F)).$$
By applying the functor $\bfE j^\an_{!!}$,
we have an isomorphism 
$$\Sol_{\tl{X}^\an}^\rmE((\bfD j_\ast\M)^\an) \simeq e_{\tl{X}^\an}(\iota_{\tl{X}^\an}(\bfR j^\an_!(\F))).$$
Hence, 
the enhanced ind-sheaf
$\Sol_{\tl{X}^\an}^\rmE((\bfD j_\ast\M)^\an)\in\BEC(\I\CC_{\tl{X}^\an})$
is of sheaf type.
By Lemma \ref{keylem},
we have $(\bfD j_\ast\M)^\an\in\BDCrh(\D_{\tl{X}^\an})$.
This means that $\M\in \BDCrh(\D_X)$
by \cite[Thm.\:6.1.12]{HTT08}.
\end{proof}

Let us reprove Fact \ref{regRH_alg}
(the algebraic version of the Riemann--Hilbert correspondence for regular holonomic $\D$-modules).
\begin{theorem}\label{main4}
There exists an equivalence of triangulated categories
$$\Sol_X\colon  \BDCrh(\D_X)^{\op}\simto\BDC_{\CC\mbox{\scriptsize -}c}(\CC_X).$$
\end{theorem}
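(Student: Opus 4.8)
The plan is to mirror, line for line, the proof of Theorem \ref{main2}, replacing each analytic ingredient by its bordered/algebraic counterpart. By Theorem \ref{main3} the solution functor already sends $\BDCrh(\D_X)^{\op}$ into $\BDC_{\CC\mbox{\scriptsize -}c}(\CC_X)$, so I would only need to prove that
$$\Sol_X\colon \BDCrh(\D_X)^{\op}\to\BDC_{\CC\mbox{\scriptsize -}c}(\CC_X)$$
is fully faithful and essentially surjective.

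First I would record the bordered analogue of Fact \ref{fact_e}, namely the key isomorphism
$$\Sol_{X_\infty}^\rmE(\M)\simeq e_{X^\an_\infty}\big(\iota_{X^\an_\infty}(\Sol_X(\M))\big)\qquad(\M\in\BDCrh(\D_X)).$$
This I would deduce from Lemma \ref{lem3.10}: for regular holonomic $\M$ the object $\Sol_{X_\infty}^\rmE(\M)$ is of sheaf type, say $\Sol_{X_\infty}^\rmE(\M)\simeq e_{X^\an_\infty}(\iota_{X^\an_\infty}(\G))$ for some $\G\in\BDC(\CC_X)$; applying the sheafification functor $\sh_{X^\an_\infty}$, then invoking Fact \ref{fact_she} on the left and the isomorphism $\Sol_X(\M)\simeq\sh_{X^\an_\infty}(\Sol_{X_\infty}^\rmE(\M))$ used in the proof of Theorem \ref{main3} on the right, identifies $\G$ with $\Sol_X(\M)$.

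For full faithfulness I would take $\M,\N\in\BDCrh(\D_X)$ and run the chain
\begin{align*}
\Hom_{\BDC_{\CC\mbox{\scriptsize -}c}(\CC_X)}\big(\Sol_X(\N),\Sol_X(\M)\big)
&\simeq \Hom_{\BEC_{\CC\mbox{\scriptsize -}c}(\I\CC_{X_\infty})}\big(e_{X^\an_\infty}(\iota_{X^\an_\infty}(\Sol_X(\N))),\, e_{X^\an_\infty}(\iota_{X^\an_\infty}(\Sol_X(\M)))\big)\\
&\simeq \Hom_{\BEC_{\CC\mbox{\scriptsize -}c}(\I\CC_{X_\infty})}\big(\Sol_{X_\infty}^\rmE(\N),\,\Sol_{X_\infty}^\rmE(\M)\big)\\
&\simeq \Hom_{\BDChol(\D_X)}(\M,\N),
\end{align*}
in which the first isomorphism is the full faithfulness of $e_{X^\an_\infty}\circ\iota_{X^\an_\infty}$ on $\CC$-constructible objects (the bordered counterpart of the inputs used in Theorem \ref{main2}, together with Proposition \ref{prop3.7}), the second is the key isomorphism above, and the last is Theorem \ref{thm-Ito21}. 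Since $\BDCrh(\D_X)$ is full in $\BDChol(\D_X)$, this gives full faithfulness.

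For essential surjectivity I would start from $\F\in\BDC_{\CC\mbox{\scriptsize -}c}(\CC_X)$. By Proposition \ref{prop3.7} one has $e_{X^\an_\infty}(\iota_{X^\an_\infty}(\F))\in\BEC_{\CC\mbox{\scriptsize -}c}(\I\CC_{X_\infty})$, so by Theorem \ref{thm-Ito21} there is $\M\in\BDChol(\D_X)$ with $e_{X^\an_\infty}(\iota_{X^\an_\infty}(\F))\simeq\Sol_{X_\infty}^\rmE(\M)$; as the left-hand side is of sheaf type, Lemma \ref{lem3.10} forces $\M\in\BDCrh(\D_X)$, and then the key isomorphism together with Fact \ref{fact_she} (after applying $\sh_{X^\an_\infty}$) yields $\F\simeq\Sol_X(\M)$. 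I expect the main obstacle to be essentially organizational rather than conceptual: all the genuine content has been isolated into Lemma \ref{lem3.10} and Propositions \ref{prop3.7}, \ref{prop3.8}, so the only point demanding care is checking that the full faithfulness of $e\circ\iota$ and the sheaf-type/sheafification identifications, originally formulated over a complex manifold, transfer verbatim to the bordered space $X^\an_\infty$.
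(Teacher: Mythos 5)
Your proposal is correct and follows essentially the same route as the paper: reduce to full faithfulness and essential surjectivity via Theorem \ref{main3}, use the full faithfulness of $e_{X^\an_\infty}\circ\iota_{X^\an_\infty}$ together with Proposition \ref{prop3.7}, Theorem \ref{thm-Ito21} and Lemma \ref{lem3.10}, and conclude with the sheafification functor and Fact \ref{fact_she}. The only cosmetic difference is that you rederive the identification $\Sol_{X_\infty}^\rmE(\M)\simeq e_{X^\an_\infty}(\iota_{X^\an_\infty}(\Sol_X(\M)))$ from the sheaf-type statement of Lemma \ref{lem3.10} plus sheafification, whereas the paper cites it directly as the last part of Proposition 3.14 of [Ito21a]; both paths rest on the same underlying result.
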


\begin{proof}
By Theorem \ref{main3}
it is enough to show that the functor
$\Sol_X\colon  \BDCrh(\D_X)^{\op}\to\BDC_{\CC\mbox{\scriptsize -}c}(\CC_X)$
is fully faithful and essentially surjective.

Let $\M, \N\in \BDCrh(\D_X)$.
Then we have isomorphisms
\begin{align*}
\Hom_{\BDC_{\CC\mbox{\scriptsize -}c}(\CC_X)}\left(\Sol_X(\N), \Sol_X(\M)\right)
&\simeq
\Hom_{\BEC_{\CC\mbox{\scriptsize -}c}(\I\CC_{X_\infty})}\left(
e_{X^\an_\infty}(\iota_{X^\an_\infty}(\Sol_X(\N))), e_{X^\an_\infty}(\iota_{X^\an_\infty}(\Sol_X(\M)))\right)\\
&\simeq
\Hom_{\BEC_{\CC\mbox{\scriptsize -}c}(\I\CC_{X_\infty})}\left(
\Sol_{X_\infty}^\rmE(\N), \Sol_{X_\infty}^\rmE(\M)\right)\\
&\simeq
\Hom_{\BDCrh(\D_X)}\left(\M, \N\right),
\end{align*}
where the first isomorphism follows from
the fact that the functor
$e_{X^\an_\infty}\circ \iota_{X^\an_\infty}\colon
\BDC_{\CC\mbox{\scriptsize -}c}(\CC_X)\to \BEC_{\CC\mbox{\scriptsize -}c}(\I\CC_{X_\infty})$
is fully faithful by \cite[Lem.\:2.8.2]{DK19} (see also \cite[Prop.\:2.20]{KS16-2}),
\cite[(2.6)]{KS16-2}
and Proposition \ref{prop3.7},
the second isomorphism follows from \cite[Last part of Prop.\:3.14]{Ito21}\footnote{
Remark that the assertion of \cite[Last part of Prop.\:3.14]{Ito21} (where we omit iota in the diagram)
was proved without Fact \ref{regRH_alg}.}
and the last isomorphism follows from Theorem \ref{thm-Ito21}.
Hence, the functor $\Sol_X$ is fully faithful.

Let $\F\in \BDC_{\CC\mbox{\scriptsize -}c}(\CC_X)$.
By Proposition \ref{prop3.7},
we have $e_{X^\an_\infty}(\iota_{X^\an_\infty}(\F)) \in \BEC_{\CC\mbox{\scriptsize -}c}(\I\CC_{X_\infty})$
and hence there exists $\M\in\BDChol(\D_X)$ such that
$e_{X^\an_\infty}(\iota_{X^\an_\infty}(\F)) \simeq \Sol_{X_\infty}^\rmE(\M)$
by Theorem \ref{thm-Ito21}.
Since $\M\in \BDCrh(\D_X)$ by Lemma \ref{lem3.10},
we obtain an isomorphism
$$e_{X^\an_\infty}(\iota_{X^\an_\infty}(\F)) \simeq e_{X^\an_\infty}(\iota_{X^\an_\infty}(\Sol_X(\M)))$$
by \cite[Last part of Prop.\:3.14]{Ito21}
and hence we have $\F\simeq \Sol_X(\M)$
by applying the sheafification functor $\sh_{X^\an_\infty}$
and using Fact \ref{fact_she}.
This means that the functor $\Sol_X$ is essentially surjective.

Therefore, there exists an equivalence of triangulated categories
$$\Sol_X\colon  \BDCrh(\D_X)^{\op}\simto\BDC_{\CC\mbox{\scriptsize -}c}(\CC_X).$$
\end{proof}

\end{document}